\documentclass[a4paper,leqno,12pt]{amsart}
%
%
%
\usepackage[all]{xy}
\usepackage{amsmath}
\usepackage{amstext}
\usepackage{amsfonts}
\usepackage[mathscr]{euscript}
\usepackage{amscd}
\usepackage{latexsym}
\usepackage{amssymb}
\usepackage{multirow}
\usepackage{bigstrut}
\usepackage{tikz-cd}
\usepackage{tikz}
\usepackage{mathtools}
\usepackage{graphics}
\usepackage{graphicx}
\usepackage{bm}
\usepackage{relsize}

\usepgflibrary{arrows.meta}
\usepgflibrary{plotmarks}
\pgfsetplotmarksize{0.045cm}
\usetikzlibrary{positioning}

\tikzcdset{arrows={line width=0.6pt},arrow style=tikz}

\tikzset{>=stealth}

\setlength{\topmargin}{-3mm}
\setlength{\textheight}{9.26in}
\setlength{\oddsidemargin}{.1in}
\setlength{\evensidemargin}{.1in}
\setlength{\textwidth}{6.0in}
%
%
%
\theoremstyle{plain}
\swapnumbers
    \newtheorem{theorem}{Theorem}[section]
    \newtheorem{proposition}[theorem]{Proposition}
    \newtheorem{lemma}[theorem]{Lemma}
    \newtheorem{corollary}[theorem]{Corollary}
    
    \newtheorem{subsec}[theorem]{}
\theoremstyle{definition}
    \newtheorem{definition}[theorem]{Definition}

\theoremstyle{remark}
        \newtheorem{remark}[theorem]{Remark}

    \newtheorem{ack}[theorem]{Acknowledgements}
\newenvironment{myeq}[1][]
{\stepcounter{theorem}\begin{equation}\tag{\thetheorem}{#1}}
{\end{equation}\ignorespacesafterend}
\newcommand{\myrdiag}[2][]
{\stepcounter{theorem}\begin{equation}
     \tag{\thetheorem}{#1}\vcenter{\xymatrix@R=10pt@C=10pt{#2}}\end{equation}}
\newcommand{\mysdiag}[2][]
{\stepcounter{theorem}\begin{equation}
     \tag{\thetheorem}{#1}\vcenter{\xymatrix@R=19pt@C=15pt{#2}}\end{equation}}
\newenvironment{mysubsection}[2][]
{\begin{subsec}\begin{upshape}\begin{bfseries}{#2.}
\end{bfseries}{#1}}
{\end{upshape}\end{subsec}}
\newcommand{\sect}{\setcounter{theorem}{0}\section}






\newcommand{\clC}{\mathcal{C}}
\newcommand{\clD}{\mathcal{D}}

\newcommand{\clF}{\mathcal{F}}
\newcommand{\Fs}{\clF\sb{\bl}}
\newcommand{\Fss}{\ovl{\clF}\sb{\bl}}

\newcommand{\clK}{\mathcal{K}}
\newcommand{\ck}{\mathcal{K}}

\newcommand{\clM}{\mathcal{M}}

\newcommand{\clO}{\mathcal{O}}

\newcommand{\clS}{\mathcal{S}}

\newcommand{\clV}{\mathcal{V}}

\newcommand{\clX}{\mathcal{X}}
\newcommand{\clY}{\mathcal{Y}}
\newcommand{\clZ}{\mathcal{Z}}

\newcommand{\za}{\alpha}
\newcommand{\zb}{\beta}

\newcommand{\zd}{\delta}
\newcommand{\zD}{\Delta}

\newcommand{\zve}{\varepsilon}
\newcommand{\zf}{\phi}
\newcommand{\zl}{\lambda}
\newcommand{\zL}{\Lambda}

\newcommand{\zt}{\theta}
\newcommand{\zvt}{\vartheta}

\newcommand{\zr}{\rho}
\newcommand{\zs}{\sigma}

\newcommand{\pt}{\partial}

%
%
\newcommand{\ab}{\operatorname{ab}}
\newcommand{\Aug}{\operatorname{Aug}}
\newcommand{\BW}{\operatorname{BW}}
\newcommand{\Com}{\operatorname{C}}

\newcommand{\Dec}{\operatorname{Dec}}
\newcommand{\dd}{\operatorname{d}\!}
\newcommand{\dX}{\dd\! X}
\newcommand{\dpz}{\dd\pi\sb{0}}

\newcommand{\Diag}{\operatorname{Diag}}
\newcommand{\Ner}{\operatorname{Ner}}

\newcommand{\Hom}{\operatorname{Hom}}
\newcommand{\map}{\operatorname{map}}

\newcommand{\Id}{\operatorname{Id}}

\newcommand{\bj}{\bar{\mbox{\textit{\j}}}}

\newcommand{\opp}{\sp{\operatorname{op}}}

\newcommand{\pr}{\operatorname{pr}}
\newcommand{\SSO}{\operatorname{SO}}
\newcommand{\Tot}{\operatorname{Tot}}
\newcommand{\bsim}{/\!\!\sim}

\newcommand{\bl}{\bullet}
\newcommand{\Cu}{C\sp{\bl}}
\newcommand{\hCu}{\widehat{C}\sp{\bl}}
\newcommand{\nid}{\noindent}

\newcommand{\ovl}[1]{\overline{#1}}

\newcommand{\up}[1]{^{(#1)}}

\newcommand{\rw}{\rightarrow}

\newcommand{\xrw}{\xrightarrow} 
\newcommand{\hxrw}[1]{\xymatrix{\ \ar@{^{(}->}^{#1}[r] & \ }}
\newcommand{\tiund}[1]{{\times}\sb{#1}}
\newcommand{\pro}[3]{#1\tiund{#2}\overset{#3}{\cdots}\tiund{#2}#1}
\newcommand{\tens}[2]{#1\,\tiund{#2}\,#1}
\newcommand{\uset}[2]{\underset{#1}{#2}}
\newcommand{\oset}[2]{\overset{#1}{#2}}
\newcommand{\mi}{\text{-}}

\newcommand{\cop}{\textstyle{\,\coprod\,}}

\newlength{\myline}
\setlength{\myline}{0.7pt}


\newcommand{\Dop}{\Delta\opp}

\newcommand{\Cat}{\mbox{$\mathsf{Cat}\,$}}

\newcommand{\Cath}{\mbox{$\mathsf{Cat\sb{\clO}}$}}
\newcommand{\Gp}{\mbox{$\mathsf{Gp}$}}
\newcommand{\Gpd}{\mbox{$\mathsf{Gpd}$}}
\newcommand{\Gpdo}{\mbox{$\mathsf{Gpd}\sb{\clO}$}}
\newcommand{\GGpdo}{\mbox{$2\mi\mathsf{Gpd}\sb{\clO}$}}
\newcommand{\Spl}{\mbox{$\mathsf{Spl\,}$}}

\newcommand{\Nb}[1]{N\sb{(#1)}}

\newcommand{\Trt}{\mbox{$\mathsf{Track\sb{\clO}}$}}
\newcommand{\Gro}{\mbox{$\mathsf{Graph\sb{\clO}}$}}
\newcommand{\Set}{\mbox{$\mathsf{Set}$}}
\newcommand{\Top}{\mbox{$\mathsf{Top}$}}

\newcommand{\iov}{\ovl{I}}
\newcommand{\ion}{\ovl{N}}

\newcommand{\funcat}[2]{[\Delta^{{#1}\opp},#2]}

\newcommand{\diov}{\ovl{\Diag}}

\newcommand{\tiq}{\oset{q}{\times}}

\newcommand{\Pz}{\Pi\sb{0}}
\newcommand{\nab}{\nabla}


%
\newcommand{\wh}{\ -- \ }

\newcommand{\w}[2][ ]{\ensuremath{#2}{#1}}
\newcommand{\ww}[1]{\ensuremath{#1}}
\newcommand{\wb}[2][ ]{(\ensuremath{#2}){#1}}
\newcommand{\wref}[2][ ]{\ \eqref{#2}{#1}\ }

\newcommand{\hs}{\hspace*{5 mm}}
\newcommand{\hsm}{\hspace*{2 mm}}

\newcommand{\Sa}{\clS\sb{\ast}}
\newcommand{\hy}[2]{{#1}\text{-}{#2}}
\newcommand{\hpi}{\hat{\pi}\sb{1}}
\newcommand{\iO}[1]{({#1}\!,\!\clO)}
\newcommand{\iOC}[1]{\hy{\iO{#1}}{\Cat}}
\newcommand{\GO}{(\Gpd,\!\clO)}
\newcommand{\GOC}{\hy{\GO}{\Cat}}
\newcommand{\SO}{\iO{\clS}}
\newcommand{\SaO}{\iO{\Sa}}
\newcommand{\OC}{\hy{\clO}{\Cat}}
\newcommand{\SOC}{\iOC{\clS}}
\newcommand{\SaOC}{\iOC{\Sa}}
\newcommand{\VO}{\iO{\clV}}
\newcommand{\VOC}{\iOC{\clV}}

%
%
\newcommand{\BL}{B\Lambda}
\newcommand{\EM}[3]{E\sp{#1}({#2},{#3})}
\newcommand{\EL}[2]{\EM{\Lambda}{#1}{#2}}
\newcommand{\HSO}[3]{H\sp{#1}\sb{\SSO}({#2};{#3})}
%
\newcommand{\co}[1]{c({#1})}

\newcommand{\Sc}[1]{\scriptstyle{#1}}

\newcommand{\Sz}[1]{\scriptsize{#1}}

\newcommand{\arsx}[1]{\ar@<1.5ex>@/\sb{0}.5pc/@{-}[#1]\ar@<1.5ex>@/\sb{0}.55pc/@{-}[#1]} 
\newcommand{\ardx}[1]{\ar@<-2.5ex>@/^0.5pc/@{-}[#1]\ar@<-2.5ex>@/^0.55pc/@{-}[#1]}


\setcounter{tocdepth}{2}


%


\begin{document}

\title {Comonad Cohomology of Track Categories}

%

%
\author{David Blanc}
\address{Department of Mathematics\\ University of Haifa\\ 3498838 Haifa\\ Israel}
\email{blanc@math.haifa.ac.il}
\author{Simona Paoli}
\address{Department of Mathematics\\ University of Leicester\\ Leicester
  LE1 7RH, UK}
\email{sp424@le.ac.uk}
\date{\today}
\subjclass[2010]{55S45; 18G50, 18B40}
\keywords{track category, comonad cohomology, simplicial category}

\begin{abstract}
  We define a comonad cohomology of track categories, and show that it is related via a long exact sequence
  to the corresponding $\SO$-cohomology. Under mild hypotheses, the comonad cohomology coincides,
  up to reindexing, with the $\SO$-cohomology, yielding an algebraic description of the latter.
  We also specialize to the case where the track category is a $2$-groupoid.
\end{abstract}

\maketitle

\setcounter{section}{0}

%
%
\section*{Introduction}
\label{cint}

One of several models for $(\infty,1)$-categories, a central topic
of study in recent years (see, e.g., \cite{Bergner2010}) is the
category of \emph{simplicial categories}; that is, (small)
categories $Y$ enriched in simplicial sets.  If the object set of
$Y$ is $\clO$, we say it is an \textit{$\SO$-category}.

One may analyze a topological space (or simplicial set) $X$  by means of its \emph{Postnikov tower}
\w[,]{(P\sp{n} X)\sb{n=0}\sp{\infty}} where the $n$-th \emph{Postnikov section} \w{P\sp{n}X} is an
\emph{$n$-type} (that is,  has trivial homotopy groups in dimension greater than $n$). The successive
sections are related through their \emph{$k$-invariants}: cohomology classes in
\w[.]{H\sp{n+1}(P\sp{n-1}X; \pi\sb{n}X)}

Since the Postnikov system is functorial (and preserves products), one can also define it for a
simplicial category $Y$: \w{P\sp{n}Y} is then a category enriched in $n$-types, and its $k$-invariants
are expressed in terms of the \ww{\SO}-cohomology of \cite{DwyerKanSmith1986}.

A long-standing open problem is to find a purely ``algebraic'' description of Postnikov systems, both for
spaces and for simplicial categories. For the Postnikov sections, there are various algebraic models
of $n$-types \wh and thus of categories enriched in $n$-types\wh in the literature, using a variety of
higher categorical structures. However, the problem of finding an algebraic model for the $k$-invariants
is largely open. For this purpose, we need first an algebraic formulation of the cohomology theories
used to define the $k$-invariants. This leads us to look for an algebraic description of the cohomology
of a category enriched in a suitable algebraic model of $n$-types.

We here realize the first step of this program, for \textit{track categories} \wh that is, categories enriched
in groupoids. In the future we hope to extend this to the cohomology of $n$-track categories \wh that is,
those enriched in the $n$-fold groupoidal models of $n$-types developed by the authors in
\cite{BP} and \cite{Paolibook}.

In \cite{BlancPaoli2011} the authors introduced a cohomology theory for  track categories (which
generalizes the Baues-Wirsching cohomology of categories \wh see \cite{BauesWirsching1985}), and showed
that it coincides, up to indexing with the corresponding $\SO$-cohomology.
This was then used to describe the first $k$-invariant for a $2$-track category.

A direct generalization of this approach is problematic, because
of the difficulty of defining a full and faithful simplicial nerve
of weak higher categorical structures. Instead, we use a version
of Andr\'{e}-Quillen cohomology, also known as \textit{comonad
cohomology}, since we use a comonad to produce a simplicial
resolution of our track category (see \cite{BarrBeck}). We
envisage a generalization to higher dimensions, using the $n$-fold
nature of the models of $n$-types in \cite{BP} and
\cite{Paolibook}.

Our main result (see Corollaries \ref{cor-maps-theta-xi-1} and \ref{cor-maps-theta-xi-2}) is that under
mild hypotheses on a track category $X$ (always satisfied up to $2$-equivalence), the comonad
cohomology of $X$ (Definition \ref{def-maps-theta-xi-1}) coincides, up to a dimension shift, with
its $\SO$-cohomology. This follows from Theorem \ref{the-maps-theta-xi-1}, which states that any
track category $X$ has a long exact sequence relating the comonad cohomology of $X$, its
\ww{\SO}-cohomology and the \ww{\SO}-cohomology of the category \w{X\sb{0}} of objects and
$1$-arrows of $X$.
When the track category $X$ is a $2$-groupoid, its \ww{\SO}-cohomology coincides with the cohomology
of its classifying space.

\begin{mysubsection}{Notation and conventions}\label{snac}
Denote by $\Delta$ the category of finite ordered sets, so for any $\clC$, \w{[\Dop,\clC]}
is the category of simplicial objects in $\clC$, while \w{[\Delta,\clC]} is the category
of cosimplicial objects in $\clC$. In particular, we write $\clS$ for the category
\w{[\Dop,\Set]} of simplicial sets. We write \w{\co{A}\in [\Dop,\clC]} for the constant
simplicial object on \w[.]{A\in\clC}

For any category $\clC$  with finite limits, we write \w{\Gpd\,\clC} for the category of
groupoids internal to $\clC$ \wh that is,  diagrams in $\clC$ of the form
\begin{equation*}
  \xymatrix{
    X\sb{1}\times\sb{X\sb{0}}X\sb{1} \ar^(0.6){m}[r] &
    \hspace*{1mm} X\sb{1} \ar@(ul,ur)[]^{c}\ar@<1ex>^(0.5){d\sb{0}}[rr]\ar_(0.5){d\sb{1}}[rr] &&
    X\sb{0} \ar@<2.5ex>^(0.5){\zD}[ll]
    }
\end{equation*}
\nid satisfying the obvious identities making the composition $m$ associative and
every `$1$-cell' in \w{X\sb{1}} invertible.

For a fixed set $\clO$, we denote by \w{\Cath} the category of small categories with
object set $\clO$ (and functors which are the identity on $\clO$).
In particular, a category $\clZ$ enriched in simplicial sets with object
set $\clO$  will be called an \ww{\SO}-\emph{category}, and the category of
all such will be denoted by \w[.]{\SOC} Equivalently, such a category $\clZ$ can
be thought of as a simplicial object in \w[.]{\Cath} This means
$\clC$ has a fixed object set $\clO$  in each dimension, and all face and degeneracy
functors the identity on objects.

More generally, if \w{(\clV,\otimes)} is any monoidal category, a
\ww{\VO}-\emph{category} is a small category \w{\clC\in \Cath} enriched
over $\clV$. The category of all such categories will be denoted by
\w[.]{\VOC} Examples for \w{(\clV,\otimes)} include $\clS$, \w[,]{\Top}
\w[,]{\Gp} and \w[,]{\Gpd}  with $\otimes$ the Cartesian
product. When \w[,]{\clV=\Gpd} we call $\clZ$ in \w{\Trt:=\GOC} a \emph{track category}
with object set $\clO$ (see \S \ref{sbs-track-cat} below).

Another example is pointed simplicial sets \w[,]{\clV=\Sa} with
\w{\otimes=\wedge} (smash product). We can identify an
\ww{\SaO}-category with a simplicial pointed $\clO$-category.
\end{mysubsection}

\begin{mysubsection}{Organization}\label{sbs-org}
  Section \ref{sec-prelim} provides some background material on the Bourne adjunction
  (\S \ref{sbs-bourne}),
internal arrows (\S \ref{sbs-adjpair}), modules ( \S \ref{sbs-modules}), \ww{\SO}-cohomology
(\S\ref{sbs-theta-cohom}) and simplicial model categories (\S \ref{sbs-bous-kan}).
Section \ref{sec-short-seq} sets up a short exact sequence associated to certain internal groupoids
(Proposition \ref{pro-short-exact-1}), and Section \ref{sec-comonad-res} introduces the comonad used
to define our cohomology, and shows its relation to \ww{\SO}-cohomology (Theorem \ref{the-com-res-2}
and Corollary \ref{cor-com-res-1}).
Section \ref{sec-coh-track} defines the comonad cohomology of track categories, and establishes the
long exact sequence relating the \ww{\SO}-cohomologies of $X$ and of \w{X\sb{0}} and the comonad
cohomology of $X$ (Theorem \ref{the-maps-theta-xi-1}).
Section \ref{sec-groupoidal} specializes to the case of a $2$-groupoid, showing that in this case
\ww{\SO}-cohomology coincides with that of the classifying space (Corollary \ref{pro-groupoidal-11}).
The long exact sequence of Corollary \ref{the-groupoidal-1} recovers \cite[Theorem 13]{PaoliHHA2003}.
\end{mysubsection}

\begin{ack}
We would like to thank the referee for his or her pertinent and helpful comments.
The first author was supported by the Israel Science Foundation grants 74/11 and 770/16.
The second author would like to thank the Department of Mathematics of the University
of Haifa for its hospitality during several visits.
\end{ack}

%
%
\sect{Preliminaries}\label{sec-prelim}

In this section we review some background material on the Bourne adjunction, the internal arrow functor,
modules, \ww{\SO}-cohomology, and simplicial model categories.

\begin{mysubsection}{The Bourne adjunction}\label{sbs-bourne}
  Let $\clC$ be a category with finite limits and let \w{\Spl\clC} be the category whose objects are the
  split epimorphisms with a given splitting:
\begin{myeq}\label{eqsplitepic}
\xymatrix{(A \ar@<0.5ex>^{q}[r] & B \ar@<0.5ex>^{t}[l])}
\end{myeq}
\nid Define \w{R:\Gpd\,\clC\to\Spl\clC} by
\begin{equation*}
RX=\xymatrix{(X\sb{1} \ar@<0.5ex>^{d\sb{0}}[r] & X\sb{0} \ar@<0.5ex>^{s\sb{0}}[l])\;.
}
\end{equation*}
Let \w{H:\Spl\clC\to\Gpd\,\clC} associate to
\w{Y=\xymatrix{(A \ar@<0.5ex>^{q}[r] & B \ar@<0.5ex>^{t}[l])}} the object
\begin{equation*}
  HY:\xymatrix{A\tiq A\tiq A \ar^(0.6){m}[r]& A\tiq A
    \ar@<2ex>^(0.6){\pr\sb{0}}[r]\ar_(0.6){\pr\sb{1}}[r] &
    A \ar@<2.5ex>^(0.4){\zD}[l]}
\end{equation*}
\nid of \w[,]{\Gpd\,\clC} where \w{A\tiq A} is the kernel pair of $q$, \w{\zD=(\Id_A,\Id_A)}
is the diagonal map, and
\begin{equation*}
  m=(\pr\sb{0},\pr\sb{2}):A\tiq A\tiq A\cong (A\tiq A)\tiund{A}(A\tiq A)\rw A\tiq A
\end{equation*}
with \w{\pr\sb{i} m=\pr\sb{i}\pi\sb{i}} \wb[,]{i=0,1} where \w[,]{\pi\sb{i}} \w{\pr\sb{i}}
are the two projections.
Note that \w{HY} is an internal equivalence relation in $\clC$ with \w[.]{\Pi\sb{0}(HY)=B}
Consider the following diagram
\begin{myeq}\label{sbs-bourne-eq1}
  \begin{gathered}
\xymatrix@C=30pt@R=18pt{
\Gpd\,\clC \ar^{\Ner}[r]  \ar@<1ex>^{R}[d] & \funcat{}{\clC} \ar@<-1ex>_{\Dec}[d]\\
\Spl \clC \ar_{n}[r] \ar@<1ex>^{H}[u] & \Aug\funcat{}{\clC} \ar@<-1ex>_{+}[u]
}
\end{gathered}
\end{myeq}
\nid where \w{\Aug\funcat{}{\clC}} is the category of augmented simplicial objects in $\clC$, $+$ is
the functor that forgets the augmentation, the \emph{d\'{e}calage} functor \w{\Dec} (
obtained by forgetting the last face operator) is its right adjoint, and $n X$ is the nerve of the internal
equivalence relation associated to $X$, augmented over itself, with \w[.]{\Ner H = +n}

Diagram \wref{sbs-bourne-eq1} commutes up to isomorphism \wh that is, there is a natural
isomorphism \w[.]{\za:\Dec\Ner\cong nR}
Since \w[,]{+ \dashv \Dec} this implies that \w{H \dashv R} (see \cite[Theorem 1]{Bourne1987}).

Given \w{X\in\Spl\clC} as in \wref[,]{eqsplitepic} we have
\w[.]{R H X = (\xymatrix{A\tiq A \ar@<0.5ex>^(0.6){pr\sb{0}}[r] & A \ar@<0.5ex>^(0.4){\zD}[l]})}
The unit \w{\eta:X\rw R H X} of the adjunction \w{H \dashv R} is given by
\begin{myeq}\label{unithr}
\xymatrix@C=40pt@R=18pt{
A \ar@<0.5ex>^{q}[r] \ar_(0.4){t\sb{1}}[d] & B \ar@<0.5ex>^{t}[l] \ar^(0.4){t}[d] \\
A\tiq A \ar@<0.5ex>^(0.6){pr\sb{0}}[r] & A \ar@<0.5ex>^(0.4){\zD}[l]
}
\end{myeq}
where \w{t\sb{1}} is determined by
\begin{equation*}
\xymatrix@C=40pt@R=15pt{
A \ar@/^1pc/^{\Id}[rrd] \ar^{t\sb{1}}[rd] \ar@/_1pc/_{tq}[rdd]& & \\
& A\tiq A \ar^{\pr\sb{1}}[r] \ar_{\pr\sb{0}}[d] & A \ar^{q}[d] \\
& A \ar_{q}[r] & A
}
\end{equation*}
so that
\begin{myeq}\label{sbs-bourne-eq5}
  pr\sb{0} t\sb{1} = tq,\hs\pr\sb{1} t\sb{1} = \Id,\hs\text{and}\hs t\sb{1} t=(t,t)=\zD t\;.
\end{myeq}
This show that \w{\eta=(t,t\sb{1})} is a morphism in \w[.]{\Spl \clC}

Finally, if $\mu$ is the counit of the adjunction \w[,]{H \dashv R}  and \w{\mu'} that of \w[,]{+ \dashv \Dec}
then for any \w{X\in\Gpd\,\clC} the following diagram commutes:
\begin{equation*}
\xymatrix@C=30pt@R=18pt{
\Ner H R X \ar@{=}[d] \ar^{\Ner \mu}[rr] && \Ner X\\
+ n R X \ar @{}[d]|{\wr\parallel} &&\\
+\Dec \Ner X  \ar@{-->}_{\mu'\sb{\Ner X}}[uurr] &&
}
\end{equation*}
Thus \w{\mu=P \Ner \mu} where \w[.]{P \dashv \Ner}
\end{mysubsection}

\begin{mysubsection}{The internal arrow functor}\label{sbs-adjpair}
  Let \w{U:\Gpd\,\clC \rw \clC} be the arrow functor, so \w{UY=Y\sb{1}} for \w[,]{Y\in\Gpd\,\clC} and assume
  $\clC$ is (co)complete with commuting finite coproducts and pullbacks. For any \w{X\in\clC} let
  \w{X_s,X_t} be two copies of $X$, with \w{F:X_s\cop X_t \rw X} the fold map and
 \w{LX\in\Gpd\,\clC} the corresponding internal equivalence relation, so \w{(LX)\sb{0}=X_s\cop X_t}
 and \w[.]{(LX)\sb{1}=(X_s\cop X_t)\oset{F}{\times}(X_s\cop X_t)}
Then \w{\xymatrix{X_s\cop X_t \ar@<0.5ex>^(0.7){F}[r] & X \ar@<0.5ex>^(0.3){i\sb{1}}[l]}}  is an object of
\w{\Spl\clC} and
\begin{myeq}\label{adjpair.eq1A}
  LX=H(\xymatrix{X_s\cop X_t \ar@<0.5ex>^(0.7){F}[r] & X \ar@<0.5ex>^(0.3){i\sb{1}}[l]})
\end{myeq}
\nid (cf.\ \S \ref{sbs-bourne}), where \w{i\sb{1}} is the coproduct structure map. Therefore,
\begin{myeq}\label{adjpair.eq1}
\begin{split}
    &(X_s\cop X_t)\oset{F}{\times} (X_s\cop X_t)=\\
    &(X_s\tiund{X}X_s)\cop (X_s\tiund{X}X_t)\cop (X_t\tiund{X}X_s)\cop (X_t\tiund{X}X_t)
    = X\sb{ss}\cop X\sb{st}\cop X\sb{ts}\cop X\sb{tt}
\end{split}
\end{myeq}
where \w[,]{X\sb{ss}=X\sb{s}\tiund{X}X_s} \w[,]{X\sb{st}=X\sb{s}\tiund{X}X_t} \w[,]{X\sb{ts}=X\sb{t}\tiund{X}X_s} and
\w[.]{X\sb{tt}=X\sb{t}\tiund{X}X_t} Under the identification \eqref{adjpair.eq1} the face and degeneracy maps
of \w{LX} are as follows:

\w{s\sb{0}} includes \w{X_s\cop X_t} into \w[;]{X\sb{ss}\cop X\sb{tt}}
\w{d\sb{0}} sends \w{X\sb{ss}} and \w{X\sb{st}} to \w[,]{X_s} and \w{X\sb{ts}} and \w{X\sb{tt}} to \w[;]{X_t}
and \w{d\sb{1}} sends \w{X\sb{ss}} and \w{X\sb{ts}} to \w[,]{X_s} and \w{X\sb{st}} and \w{X\sb{tt}} to \w[.]{X_t}

To see that  $L$ is left adjoint to $U$, given \w[,]{f:X\rw Y\sb{1}=UY} its adjoint \w{\tilde{f}:LX\rw Y} is given by
\w{\tilde{f}\sb{1}:X\sb{ss}\cop X\sb{st}\cop X\sb{ts}\cop X\sb{tt}\to Y\sb{1}}
(determined by \w[,]{s'\sb{0} d'\sb{0} f: X\sb{ss} \rw Y\sb{1}} \w[,]{f: X\sb{st} \rw Y\sb{1}} \w[,]{f\,\circ\tau: X\sb{ts} \rw Y\sb{1}} and
\w[),]{s\sb{0} d\sb{0} f : X\sb{tt} \rw Y\sb{1}} and \w{\tilde{f}\sb{0}:X_s\cop X_t\to Y\sb{0}} (determined by
\w{d'\sb{0} f:X_s \rw Y\sb{0}} and \w[).]{d'\sb{1} f:X_t\rw Y\sb{0}}  Here \w{\tau: X\sb{ts}\rw X\sb{st}} is the switch map,
with \w[.]{\tau\circ\tau = \Id}

Conversely, given \w{g:LX \rw Y} with \w{g\sb{1}:X\sb{ss}\cop X\sb{st}\cop X\sb{ts}\cop X\sb{tt}\to Y\sb{1}} and
\w[,]{g\sb{0}:X_s\cop X_t\to Y\sb{0}} its adjoint \w[,]{\hat{g}:X\rw Y\sb{1}}  has \w{\hat{g}\sb{0}} determined by
\w{d\sb{0} f:X_s \rw Y\sb{0}} and \w[,]{d\sb{1} f:X_t \rw Y\sb{0}} while \w{\hat{g}\sb{1}} is determined by
\w{s\sb{0} d\sb{0} f: X\sb{ss}\rw Y\sb{1} } and \w{s\sb{0} d\sb{1} f: X\sb{tt}\rw Y\sb{1}} (where \w{f:X\sb{st}\rw Y\sb{1}} is the composite
\w[).]{X\sb{st} \xrw{i} X\sb{ss}\cop X\sb{st}\cop X\sb{ts}\cop X\sb{tt} \xrw{g\sb{1}}Y\sb{1}}
\end{mysubsection}

\begin{mysubsection}{Modules}\label{sbs-modules}
  Recall that an abelian group object in a category $\clD$  with finite products is an object $G$ equipped
  with a unit map \w{\zs:*\rw G} (where $\ast$ is the terminal object),  and \emph{inverse map}
  \w[,]{i:G\rw G} and a \emph{multiplication map} \w{\mu:G\times G\rw G}  which is associative,
  commutative and unital. We require further that
\begin{myeq}\label{sbs-modules-eq1}
\mu\circ (\Id,i)\circ\zD~=~\sigma\circ c\sb{\ast}~,
\end{myeq}
\nid where \w{\zD:G\rw G\times G} is the diagonal, and $c\sb{*}$ the map to $\ast$.

\begin{definition}\label{dxmod}
  Given an object \w{X\sb{0}} in a category $\clC$, we denote by  \w{(\Gpd\,\clC,X\sb{0})} the subcategory
of  \w{\Gpd\,\clC} consisting of those $Y$ with \w{Y\sb{0}=X\sb{0}} (and groupoid maps which are the identity
on \w[).]{X\sb{0}} For \w[,]{X\in(\Gpd\,\clC,X\sb{0})} an \emph{$X$-module}  is an abelian group object
$M$ in the slice category \w[.]{(\Gpd\,\clC, X\sb{0})/X}
Since the terminal object of \w{\clD=(\Gpd\,\clC,X\sb{0})/X} is \w[,]{\Id_X:X\to X}
  and the product of \w{\zr:M\to X} with itself in $\clD$ is \w[,]{\zr p\sb{1}=\zr p_2:M\oset{\zr}{\times}M\to X}
 a unit map for \w{\zr:M\to X} is given by a section \w{\sigma:X\to M} (with \w[),]{\zr\zs=\Id}
 and the multiplication and inverse have the forms
\begin{equation*}
\xymatrix@C=2.5pc@R=1.5pc{
  M\oset{\zr}{\times}M \ar^{\mu}[rr] \ar_{}[rd] && M \ar^{\zr}[ld] && M \ar^{i}[rr] \ar_{\zr}[rd] &&
  M \ar^{\zr}[ld]\\
& X & && & X, &
}
\end{equation*}
\nid respectively. Note that \wref{sbs-modules-eq1} applied to \w{\zr:M\to X} implies that
\begin{myeq}\label{sbs-modules-eq2}
  \mu(\Id,i)\circ\zD_M~=~\zs\zr~,
\end{myeq}
\nid for diagonal \w{\zD_M:M\rw M\oset{\zr}{\times}M} and \w{\zs:X\to M}  the zero map of
\w[.]{\zr:M\to X}
\end{definition}

\begin{remark}\label{rhy}
Suppose that \w[,]{X=HY} for \w{H:\Spl\clC \rw\Gpd\,\clC} as in  \S \ref{sbs-bourne} and
\begin{myeq}\label{splitobj}
  Y=\xymatrix{(X\sb{0} \ar@<0.5ex>^{q}[r] & \pi\sb{0} \ar@<0.5ex>^{t}[l])}\in \Spl\clC~.
\end{myeq}
\nid Thus \w[,]{X\sb{1}=X\sb{0}\tiq X\sb{0}} and an $X$-module \w{M\rw X} is given by
\begin{equation*}
\xymatrix@C=3.5pc@R=1.8pc{
  M\sb{1} \ar^{\zr\sb{1}}[r] \ar@<-0.5ex>_{d\sb{0}}[d] \ar@<0.5ex>^{d\sb{1}}[d] &
  X\sb{0}\tiq X\sb{0} \ar@<-0.5ex>_{\pr\sb{0}}[d] \ar@<0.5ex>^{\pr\sb{1}}[d] \\
X\sb{0} \ar^{\Id}[r] & X\sb{0}
}
\end{equation*}
with \w[.]{\zr\sb{1}=(d\sb{0},d\sb{1})}  Note that the fiber \w{M(a,b)} of \w{\zr\sb{1}}
over each \w{(a,b)\in X\sb{0}\tiq X\sb{0}} is an abelian group, with zero
\w[,]{\zs\sb{1}(a,b)} and the zero map \w{\zf:X\rw M} is given by
\begin{equation*}
  \xymatrix@C=3.5pc@R=2pc{
    X\sb{0}\tiq X\sb{0} \ar^{\zf\sb{1}}[r] \ar@<-1.5ex>_{pr\sb{0}}[d]
    \ar^{pr\sb{1}}[d] & M\sb{1} \ar^{\zr\sb{1}}[r]
    \ar@<-1ex>_{d\sb{0}}[d] \ar^{d\sb{1}}[d] & X\sb{0}\times X\sb{0} \ar@<-1.5ex>_{pr\sb{0}}[d]
    \ar^{\pr\sb{1}}[d] \\
    X\sb{0} \ar@<-1ex>@/_1.1pc/_{\zD}[u] \ar@{=}[r] &
    X\sb{0} \ar@<-1ex>@/_1.1pc/_{s\sb{0}}[u] \ar@{=}[r] &
    X\sb{0} \ar@<-1ex>@/_1.1pc/_{\zD}[u]
}
\end{equation*}
with \w[.]{\zr\sb{1}\zf\sb{1}=\Id} Thus for $Y$ as in \wref[,]{splitobj} the adjoint
\w{\hat{\zf}\in\Hom\sb{\scriptsize{\Spl(\clC)/RH Y}}\left(Y, R(M)\right)}
is the composite
\begin{equation*}
\xymatrix@C=3.5pc@R=1.6pc{
  X\sb{0} \ar^{t\sb{1}}[r] \ar@<-1ex>_{q}[d] &
  X\sb{0}\times X\sb{0} \ar^{\zf\sb{1}}[r] \ar@<-1ex>_{pr\sb{0}}[d] & M\sb{1} \ar@<-1ex>_{d\sb{0}}[d]\\
\pi\sb{0} \ar_{t}[r]\ar@<-1ex>_{t}[u] & X\sb{0} \ar@{=}[r] \ar@<-1ex>_{\zD}[u] & X\sb{0} \ar@<-1ex>_{s\sb{0}}[u]
}
\end{equation*}
where \w{\eta=(t\sb{1},t)} is the unit of the adjunction \w[,]{H \dashv R} as in \wref[.]{unithr}
\end{remark}

\begin{definition}\label{didemp}
The idempotent map in \w{\Spl \clC}
\begin{equation*}
\xymatrix@C=3.5pc@R=1.6pc{
X\sb{0} \ar^{tq}[r] \ar@<-1ex>_{q}[d] &  X\sb{0} \ar@<-1ex>_{q}[d]\\
\pi\sb{0} \ar_{\Id}[r]\ar@<-1ex>_{t}[u] & \pi\sb{0} \ar@<-1ex>_{t}[u]
}
\end{equation*}
\nid induces an idempotent \w{e = H(tq): X=H(Y)\to X} in \w{\Gpd\,\clC}
(for $Y$ as in \wref[).]{splitobj}

Note that \w{e\sb{0}=tq} and \w[.]{e\sb{1}=(e\sb{0},e\sb{0})} We therefore obtain an idempotent operation
$\underline{e}$ on \w[,]{\Hom\sb{\Gpd\,\clC/X}(X,M)} taking \w{f:X\rw M} to \w[.]{fe:X\rw M} We
write \w[.]{fe=\underline{e}(f)}
This sends \w{(a,b)\in X\sb{0}\tiund{q}X\sb{0}} to \w[.]{f(tqa,tqb)} Let \w{e\sp{\ast} M} be the pullback
\begin{myeq}\label{estarm}
\begin{gathered}
\xymatrix@C=3.5pc@R=1.6pc{
e\sp{\ast} M \ar^{r}[r] \ar_{\zr'}[d] & M \ar^{\zr}[d]\\
X \ar_{e}[r] & X
}
\end{gathered}
\end{myeq}
\nid  in \w[.]{\Gpd\,\clC} If we denote the fiber of \w{\zr'} at
\w{(a,b)\in X\sb{1}} by \w[,]{(e^{*}M)\sb{1}(a,b)} we have
\w[,]{(e^{*}M)\sb{1}(a,b)=(a,b)\tiund{(ta,tb)}M\sb{1}(ta,tb)} which is isomorphic under
\w{r\sb{1}(a,b)} to \w[.]{M\sb{1}(ta,tb)} The unit map \w{\zs'} of \w{e^{*}M} is given
\begin{myeq}\label{estarmA}
\begin{gathered}
\xymatrix@C=3.5pc@R=1.5pc{
X \ar@/^1pc/^{\zs e}[rrd] \ar^{\zs'}[rd] \ar@/_1pc/_{\Id}[rdd]& & \\
& e\sp{\ast}M \ar^{r}[r] \ar_{\zr'}[d] & M \ar^{\zr}[d] \\
& X \ar_{e}[r] & X
}
\end{gathered}
\end{myeq}
where
\w{\parbox{20mm}{\xymatrix@R=10pt@C=10pt{X\ar^{\zs}[rr]\ar_{\Id}[rd] && M\ar^{\zr}[ld]\\ & X}}}
is the unit of \w[,]{\zr:M\to X} so in particular
\begin{myeq}\label{sbs-modules-eq3}
  \zs\sb{1}\zD\sb{X\sb{0}} = s\sb{0}:X\sb{0}\rw M\sb{1}\;.
\end{myeq}
Thus for each \w{(a,b)\in X\sb{1}} we have
\w[.]{\zs'(a,b)=((a,b),(\zs e)(a,b))=((a,b),\zs(tqa,tqb))}

The multiplication
\w{(e\sp{\ast} M)\sb{1}\tiund{X\sb{1}}(e\sp{\ast} M)\sb{1}
  \xrw{\mu'\sb{1}}(e\sp{\ast} M)\sb{1}}
on \w{((a,b),m),((a,b),m')} by
\begin{myeq}\label{sbs-modules-eq4}
  \mu'\sb{1}(((a,b),m),((a,b),m'))=((a,b),\mu\sb{1}(m,m'))~,
\end{myeq}
\nid so identifying \w{(e\sp{\ast} M)\sb{1}\tiund{X\sb{1}}(e\sp{\ast} M)\sb{1}} with
\w[,]{X\sb{1}\tiund{X\sb{1}}(M\sb{1}\tiund{X\sb{1}}M\sb{1})}
we have \w[.]{\mu'\sb{1}=(\Id,\mu\sb{1})}

\nid Finally, the zero map of \w{e\sp{\ast} M} is given on \w{((a,b),m)\in(e\sp{\ast} M)\sb{1}} by
\begin{equation*}
  \zs'\sb{1}\zr'\sb{1}((a,b),m)=\zs'\sb{1}(a,b)=\{(a,b),\zs(tqa,tqb)\}=\{(a,b), \zs \zr\sb{1}(m)\}
\end{equation*}
\nid since \w[.]{\zr\sb{1}(m)=(tqa,tqb)} Thus \w[.]{O\sb{(e\sp{\ast} M)\sb{1}}=\zs'\sb{1}\zr'\sb{1}=(\Id,\zs \zr)=(\Id,O\sb{M\sb{1}})}
\end{definition}
\end{mysubsection}

\begin{mysubsection}{{$\pmb{\SO}$}-Categories}\label{sbs-theta-cohom}
In \cite[\S 1]{DwyKan1980-1}, Dwyer and Kan define a simplicial model
category structure on \w[,]{\SOC} also valid for \w{\SaOC} (see \S \ref{snac} and
\cite[Prop.~1.1.8]{HovM}), in which a map \w{f:\clX\rw\clY} is a fibration
(respectively, a weak equivalence) if for each \w[,]{a,b\in\clO} the
induced map \w{f\sb{(a,b)}:\clX(a,b)\rw\clY(a,b)} is such.

The cofibrations in ${\SOC}$ or ${\SaOC}$ are not easy to describe.
However, for any \w[,]{\clK\in\Cath} the constant simplicial category
\w{\co{\clK}\in \funcat{}{\Cath}\cong\SOC} has a cofibrant replacement defined as follows:

Recall that a category \w{Y\in\Cath} is \emph{free} if there exists a set $S$ of
non-identity maps in $Y$ (called generators) such that every non-identity map in $Y$
can uniquely be written as a finite composite of maps in $S$. There is a forgetful functor
\w{U:\Cath\rw\Gro} to the category of directed graphs, with left adjoint the free category
functor \w{F:\Gro\rw\Cath}   (see\cite{Hasse}  and compare \cite[\S 2.1]{DwyKan1980-1}).
Both $U$ and $F$ are the identity on objects.

Similarly, an \ww{\SO}-category \w{X\in\funcat{}{\Cath}=\SO\mi\Cat} is \emph{free} if for each
\w[,]{k\in\Delta} \w{X_k\in\Cath} is free, and the degeneracy maps in $X$ send generators
to generators. Every free \ww{\SO}-category is cofibrant (cf.\ \cite[\S 2.4]{DwyKan1980-1}).
Moreover, for any \w[,]{\clK\in\Cath} a canonical cofibrant replacement \w{\Fs\clK}
for \w{\co{\clK}} in \w{\SO\mi\Cat=\funcat{}{\Cath}} (\S \ref{snac}) is obtained
by iterating the comonad \w{FU:\Cath\rw\Cath} (so \w[).]{\clF\sb{n}\clK:=(FU)^{n+1}\clK}
The augmentation \w{\Fs\clK\rw\clK} induces a weak equivalence
\w{\Fs\clK\simeq\co{\clK}} in \w[.]{\funcat{}{\Cath}\cong\SOC} If $\clK$ is pointed,
\w{\Fs\clK} is a \ww{\SaO}-category.

More generally, if $X$ is any \ww{\SO}-category, thought of as a
simplicial object in \w[,]{\Cath} its \emph{standard Dwyer-Kan
resolution} is the cofibrant replacement given by the diagonal
\w{\Diag\Fss X} of the bisimplicial object \w{\Fss
X\in\funcat{2}{\Cath}} obtained by iterating \w{FU} in each
simplicial dimension.
\end{mysubsection}

\begin{definition}\label{dsocoh}
The \emph{fundamental track category} of an \ww{\SO}-category $Z$ is obtained by
applying the fundamental groupoid functor \w{\hpi:\clS\rw\Gpd} to each mapping space
\w{\clZ(a,b)} (see \cite[\S I.8]{GJardS}. When $Z$ is fibrant, \w{\Lambda:=\hpi Z} has
a particularly simple description: for each \w[,]{a,b\in\clO} the set of objects
of \w{\Lambda(a,b)} is \w[,]{Z(a,b)\sb{0}} and for \w[,]{x,x'\in Z(a,b)\sb{0}} the morphism set
\w{(\Lambda(a.b))(x,x')} is \w[,]{\{\tau\in Z(a,b)\sb{1}~:\ d\sb{0}\tau=x,d\sb{1}\tau=x'\}\bsim}
where $\sim$ is determined by the $2$-simplices of $X$.
Since \w{\hpi} commutes with cartesian products for Kan complexes, it extends to \w{\SOC}
(after fibrant replacement).

A \emph{module} over a track category \w{\Lambda\in\GOC} is an abelian group object $M$ in
\w{\GOC/\Lambda} (see \S \ref{sbs-modules}).
For example, given a (fibrant) \ww{\SO}-category $Z$, for each \w{n\geq 2} we obtain
a \ww{\hpi Z}-module by applying \w{\pi\sb{n}(-)} to each mapping space of $Z$.

For each track category \w[,]{\Lambda\in\GOC} $\Lambda$-module $M$ and \w[,]{n\geq 1} we have a
twisted Eilenberg-Mac~Lane \ww{\SO}-category \w{E=\EL{M}{n}} over $\Lambda$, with
\w{\pi\sb{n}E\cong M} and \w{\pi\sb{i}E\cong 0} for \w{2\leq i\neq n} (see
\cite[\S 1]{DKobsimp} and \cite[\S 1.3(iv)]{DwyerKanSmith1986}).

Given \w[,]{\Lambda\in\GOC} a $\Lambda$-module $M$, and an object
\w{Z\in\SOC}  equipped with a \emph{twisting map}  \w[,]{p:\hpi Z\rw \Lambda}
the $n$-th \ww{\SO}-\emph{cohomology group} of $Z$ \emph{with coefficients in $M$} is
\begin{equation*}
\HSO{n}{Z/\Lambda}{M}:=[Z,\EL{M}{n}]\sb{\Sz\SOC/\BL}=\pi\sb{0}\map\sb{\Sz\SOC/\BL}(Z,\EL{M}{n})~,
\end{equation*}
\nid where \w{\map\sb{\Sz\SOC/A}(Z,Y)} is the sub-simplicial set of
\w{\map\sb{\Sz\SOC}(Z,Y)} consisting of maps over a fixed base $A$
(cf.\ \cite[\S 2]{DwyerKanSmith1986})
Typically, \w[,]{\Lambda=\hpi Z} with $p$ a weak equivalence; if
in addition \w[,]{Z\simeq \BL} we denote \w{\HSO{n}{Z/\Lambda}{M}} simply by
\w[.]{\HSO{n}{\Lambda}{M}}
\end{definition}

\begin{mysubsection}{Simplicial model categories}\label{sbs-bous-kan}
 Recall that a \emph{simplicial} model category $\clM$ is a model category equipped with functors
  \w{X\mapsto X\otimes K} and \w[,]{X\mapsto X^K} natural in \w[,]{K\in\clS} satisfying appropriate axioms
  (cf.\ \cite[Definition 9.1.6]{Hirsch}).

For example,  $\clS$ itself is a simplicial model category, with \w{X\otimes K:=X\times K} and
  \w[,]{X^K:=\map(K,X)} where \w{\map(K,X)\in \clS} has \w[.]{\map(K,X)\sb{n}:=\Hom\sb{\clS}(K\times\zD[n],X)}
    Similarly, \w{\SO\mi\Cat} is also a simplicial model category (see \cite[Proposition 7.2]{DwyKan1980-1}).
\end{mysubsection}

\begin{definition}\label{def-sbs-bous-kan}
  Let $\clM$ be a simplicial model category. The \emph{realization} \w{|X|} of
  \w{X\in\funcat{}{\clM}} is defined to be the coequalizer of the maps
\begin{equation*}
\xymatrix{
  \uset{(\zs:[n]\rw [k])\in\zD}{\cop} X\sb{k}\otimes \zD[n]
  \ar@<0.5ex>^{\zf}[r] \ar@<-0.5ex>_{\psi}[r] &
  \uset{n\geq 0}{\cop} X\sb{n}\otimes \zD[n]~,
}
\end{equation*}
\nid where on the summand indexed by \w[,]{\zs:[n]\rw [k]} $\zf$ is the composite of
\w{\zs\sp{\ast}\otimes 1\sb{\zD[k]}:X\sb{k}\otimes \zD[n]\rw X\sb{n}\otimes\zD[n]}
with the inclusion into the coproduct, and $\psi$ is the composite of
\w{1\sb{X\sb{k}}\otimes \zs\sb{\ast}:X\sb{k}\otimes\zD[n]\rw X\sb{k}\otimes\zD[k]}
with the same inclusion (see \cite[\S VII.3]{GJardS}).

Similarly, if \w{X\in[\zD,\clM]} is a cosimplicial object in $\clM$,
its \emph{total object} \w{\Tot X} is the equalizer of
\begin{equation*}
\xymatrix{
\uset{[n]\in Ob\,\zD}{\prod} (X\sp{n})^{\zD[n]} \ar@<0.5ex>^{\zf}[r] \ar@<-0.5ex>_{\psi}[r] &
\uset{(\zs:[n]\rw [k])\in\zD}{\prod} (X^k)^{\zD[n]}
}
\end{equation*}
\nid with $\phi$ and $\psi$ defined dually (cf.\ \cite[Def. 18.6.3]{Hirsch}).
\end{definition}

The following is a straightforward generalization of \cite[XII 4.3]{BousKan}:

\begin{lemma}\label{lem-sbs-bous-kan}
  If \w{\clM=\funcat{}{\clD}} is a simplicial model category and
  \w[,]{X\in\funcat{}{\clM}\cong\funcat{2}{\clD}} then \w{\Diag X\cong |X|}
and \w[.]{\map\sb{\clM}(\Diag X,K)\cong \Tot \map\sb{\clM}(X,K)}
\end{lemma}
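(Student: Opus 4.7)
The plan is to imitate the classical argument of \cite[XII 4.3]{BousKan} for bisimplicial sets, replacing simplicial sets throughout by the given simplicial model category $\clM$, with the two isomorphisms falling out of standard coend/end manipulations.

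For the first isomorphism $\Diag X \cong |X|$, I would rewrite the defining coequalizer of Definition \ref{def-sbs-bous-kan} as a coend
$$|X| \;\cong\; \int^{[n]\in \zD} X\sb{n}\otimes \zD[n]$$
in $\clM = \funcat{}{\clD}$. Under the identification $\funcat{}{\clM} \cong \funcat{2}{\clD}$, each $X\sb{n}$ is itself a simplicial object in $\clD$, with $(X\sb{n})\sb{m} = X\sb{n,m}$. Since colimits in $\funcat{}{\clD}$ and the simplicial tensor on $\clM$ are computed level-wise (as a copower, since $\zD[n]\sb{m}$ is a set), evaluating at level $[m]\in\zD$ gives
$$(|X|)\sb{m} \;\cong\; \int^{[n]} \Big(\coprod\sb{\zD[n]\sb{m}} X\sb{n,m}\Big) \;\cong\; \int^{[n]} \coprod\sb{\Hom\sb{\zD}([m],[n])} X\sb{n,m}$$
in $\clD$. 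The co-Yoneda lemma then identifies this coend with $X\sb{m,m} = (\Diag X)\sb{m}$, naturally in $m$, yielding the isomorphism of simplicial objects.

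For the second isomorphism, I would apply the contravariant functor $\map\sb{\clM}(-, K)$ to the coequalizer defining $|X|$. Since mapping spaces take colimits in the first variable to limits in $\clS$, this produces an equalizer
$$\map\sb{\clM}(|X|, K) \;\cong\; \mathrm{eq}\Big(\prod\sb{[n]} \map\sb{\clM}(X\sb{n}\otimes\zD[n], K) \rightrightarrows \prod\sb{\sigma:[n]\to[k]} \map\sb{\clM}(X\sb{k}\otimes \zD[n], K)\Big).$$
The tensor-cotensor adjunction in the simplicial model category $\clM$ converts each term to $\map\sb{\clM}(X\sb{n}, K)^{\zD[n]}$, after which the resulting equalizer is exactly the definition of $\Tot$ applied to the cosimplicial simplicial set $[n] \mapsto \map\sb{\clM}(X\sb{n}, K)$.

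The main obstacle is verifying the compatibility of the simplicial enrichment on $\clM = \funcat{}{\clD}$ with level-wise evaluation in the simplicial direction of $\clM$, namely that $(Y\otimes K)\sb{m}$ is a copower of $Y\sb{m}$ indexed by $K\sb{m}$ so that taking $m$-th components commutes with the coend. This holds for the external-type enrichments relevant here (such as the Dwyer-Kan enrichment on $\SOC$ of \S \ref{sbs-theta-cohom}), and once this is established both isomorphisms reduce to formal coend/end manipulations as above.
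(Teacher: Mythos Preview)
Your argument is correct and is precisely the standard one. The paper itself offers no proof of this lemma: it is introduced simply as ``a straightforward generalization of \cite[XII 4.3]{BousKan}'' and then stated without further justification. Your proposal fills in exactly the details one would expect \wh the coend/co-Yoneda computation for \w{\Diag X\cong|X|} and the equalizer/adjunction manipulation for the mapping-space identity \wh and your flagged caveat about the levelwise form of the simplicial tensor on \w{\clM=\funcat{}{\clD}} is the only genuine point requiring care, which you correctly note holds in the cases the paper actually uses.
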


%
%
\sect{Short exact sequences}\label{sec-short-seq}

We now associate to any internal groupoid of the form \w{X=HY} (cf.\ \S \ref{sbs-bourne})  and
$X$-module $M$ a certain short exact sequence of abelian groups (see
Proposition \ref{pro-short-exact-1}).
When \w[,]{\clC=\Cath} this can be rewritten in a more convenient form (see
Proposition \ref{pro-maps-theta-xi-1}); however, in this section we present it in a
more general context, which may be useful in future work.
A similar short exact sequence appears in \cite[Theorem 3.5]{VanOsdol} for $\clC$
an algebraic category (with a different description of the third term).
When \w[,]{\clC=\Gp} it reduces to \cite[Lemma 6]{PaoliHHA2003}, though the
method of proof there is different.

\begin{definition}\label{ddmaps}
Let \w[,]{X=H \xymatrix{(X\sb{0} \ar@<0.5ex>^{q}[r] & \pi\sb{0} \ar@<0.5ex>^{t}[l])}} with
\w{\dX\sb{0}}
the discrete internal groupoid on \w[.]{X\sb{0}} We define \w{j\colon\dX\sb{0} \rw X}
to be the map
\begin{equation*}
\xymatrix@C=4.5pc@R=1.8pc{
  X_0 \ar^(0.4){\zD\sb{X_0}}[r] \ar@<-1.0ex>_{\Id}[d] \ar^{\Id}[d] &
  X_0\tiund{q}X_0 \ar@<-1ex>_{pr_0}[d] \ar^{\pr\sb{1}}[d] \\
X_0 \ar@<-1ex>@/_0.8pc/_{\Id}[u] \ar_{\Id\sb{X_0}}[r] & X_0 \ar@<-1ex>@/_1.1pc/_{\zD\sb{X_0}}[u]
}
\end{equation*}
\nid in \w[.]{(\Gpd\,\clC , X_0)}
Consider the pullback
\begin{myeq}\label{sbs-map-zvt-eq1}
\begin{gathered}
\xymatrix@C=4.5pc@R=1.5pc{
  j\sp{\ast} M\ar^{k}[r] \ar_{\zl}[d] & M\ar^{\zr}[d] \\
\dX\sb{0} \ar_{j}[r] & X
}
\end{gathered}
\end{myeq}
\nid in \w[,]{\Gpd\,\clC/X} where
\w[,]{d\sb{0}=d\sb{1}=\zl\sb{1}:(j\sp{\ast} M)\sb{1} \rw X\sb{0}} since \w{\dX\sb{0}}
is discrete. Because \wref{sbs-map-zvt-eq1} induces
\begin{myeq}\label{eqjone}
\xymatrix@C=3.5pc@R=2.0pc{
  (j\sp{\ast} M)\sb{1} \ar^{k\sb{1}}[r] \ar_{\zl\sb{1}}[d] & M\sb{1}\ar^{\zr\sb{1}}[d] \\
  X\sb{0} \ar_(0.4){j\sb{1}=\zD\sb{X\sb{0}}}[r] & X\sb{0}\tiund{q}X\sb{0}=X\sb{1}
}
\end{myeq}
\nid (a pullback in $\clC$), we shall denote \w{(j\sp{\ast} M)\sb{1}} by \w[.]{j\sb{1}\sp{\ast} M\sb{1}}

We shall use the following abbreviations for the relevant \w{\Hom} groups:
\begin{myeq}\label{eqabbrev}
 \begin{cases}
   \Hom(\pi\sb{0},\,t\sp{\ast} j\sb{1}\sp{\ast}M\sb{1})~:=&~
   \Hom\sb{\clC/\pi\sb{0}}\,((\pi\sb{0}\xrw{\Id}\pi\sb{0} ),\ (t\sp{\ast}j\sb{1}\sp{\ast}M\sb{1}\xrw{r\sb{1}}\pi\sb{0}))~,\\
   \Hom(X\sb{0},\,j\sb{1}\sp{\ast} M\sb{1})~:=&~
   \Hom\sb{\clC/X\sb{0}}\,((X\sb{0}\xrw{tq}X\sb{0})),\ (j\sb{1}\sp{\ast}M\sb{1}\xrw{\zl\sb{1}}X\sb{0}))~,\\
  \Hom(X\sb{1},e\sp{\ast}M\sb{1})~:=&~
  \Hom\sb{\clC/X\sb{1}}\,((X\sb{1}\xrw{\Id}X\sb{1})),\ (((e\sp{\ast}M)\sb{1}\xrw{\zr\sb{1}}X\sb{1})))~,\\
  \Hom(X,e\sp{\ast}M)~:=&~
  \Hom\sb{\Sz\Gpd\, \clC/X}\,((X\!\xrw{\Id}\!X),\ (e\sp{\ast}M\xrw{\zr'}X))~.
 \end{cases}
\end{myeq}
\end{definition}

 \begin{definition}\label{dtheta}
   In the situation described in \S \ref{ddmaps}, given a map
   \w{\parbox{20mm}{\xymatrix@C=1.5pc@R=1.0pc{
X\sb{0} \ar^{f}[rr] \ar_{e\sb{0}=tq}[rd] && j\sb{1}\sp{\ast} M\sb{1} \ar^{\zl\sb{1}}[ld]\\ & X\sb{0} &}}}
   in \w[,]{\clC/X\sb{0}} we have \w[,]{  \zr\sb{1} k\sb{1} f=\zD\sb{X\sb{0}}\zl\sb{1}f =\zD\sb{X\sb{0}}e\sb{0}=e\sb{1} \zD\sb{X\sb{0}}}
since the square
\begin{myeq}\label{sbs-map-zvt-eq2}
\begin{gathered}
\xymatrix@C=3.5pc@R=1.5pc{
X\sb{0} \ar^{e\sb{0}}[r] \ar_{\zD\sb{X\sb{0}}}[d] & X\sb{0} \ar^{\zD\sb{X\sb{0}}}[d]\\
X\sb{1} \ar_{e\sb{1}}[r] & X\sb{1}
}
\end{gathered}
\end{myeq}
\nid commutes. Now let \w{v:X\sb{0} \rw (e\sp{\ast} M)\sb{1}} be given by
\begin{myeq}\label{sbs-map-zvt-eq3}
\begin{gathered}
\xymatrix{
  X\sb{0} \ar@/^1pc/^{k\sb{1} f}[rrd] \ar^{v}[rd]
  \ar@/_1pc/_{\zD\sb{X\sb{0}}e\sb{0}=e\sb{1}\zD\sb{X\sb{0}}}[rdd]& & \\
& (e\sp{\ast}M)\sb{1} \ar^{r}[r] \ar_{\zr'\sb{1}}[d] & M\sb{1} \ar^{\zr\sb{1}}[d] \\
& X\sb{1} \ar_{e\sb{1}}[r] & X\sb{1}
}
\end{gathered}
\end{myeq}
where
\w{\zr\sb{1} k\sb{1} f=\zD\sb{X\sb{0}}\zl\sb{1} f=\zD\sb{X\sb{0}} e\sb{0}=
  e\sb{1}\zD\sb{X\sb{0}}=e\sb{1}e\sb{1}\zD\sb{X\sb{0}}}
by \wref{sbs-map-zvt-eq1} and \wref[.]{sbs-map-zvt-eq2}

Since \w[,]{e\sb{0}=tq} the following diagram commutes:
\begin{myeq}\label{sbs-map-zvt-eq4}
\begin{gathered}
\xymatrix@C=3.5pc@R=1.5pc{
X\sb{0} \ar^{q}[r] \ar_{v}[dd] & \pi\sb{0} \ar^{t}[d]\\
& X\sb{0} \ar^{\zD\sb{X\sb{0}}}[d]\\
(e\sp{\ast} M)\sb{1} \ar_{\zr'\sb{1}}[r] & X\sb{1}
}
\end{gathered}
\end{myeq}
so $v$ induces \w[.]{(v,v): X\sb{1} = X\sb{0}\tiund{q}X\sb{0} \rw (e\sp{\ast} M)\sb{1}\tiund{X\sb{1}}(e\sp{\ast} M)\sb{1}}
In the notation of \wref[,]{eqabbrev} we may therefore define
\w{\zvt: \Hom(X\sb{0},\,j\sb{1}\sp{\ast} M\sb{1})\to\Hom(X\sb{1},e\sp{\ast}M\sb{1})}
  by letting \w{\zvt\sb{1}(f):X\sb{1}\rw (e\sp{\ast} M)\sb{1}} be the composite
\begin{myeq}\label{sbs-map-zvt-eq5}
  X\sb{1}\xrw{(v,v)}(e\sp{\ast} M)\sb{1}\tiund{X\sb{1}}(e\sp{\ast} M)\sb{1}
  \xrw{(\Id,i\sb{1})}(e\sp{\ast} M)\sb{1}\tiund{X\sb{1}}(e\sp{\ast} M)\sb{1} \xrw{\mu\sb{1}} (e\sp{\ast} M)\sb{1}
\end{myeq}
\nid where \w{i:e\sp{\ast} M\rw e\sp{\ast} M} is the inverse map for the abelian group structure on
\w[.]{e\sp{\ast}M}
\end{definition}

\begin{lemma}\label{lem-sbs-map-zvt-1}
  The map \w{\zvt=(e\sb{0},\zvt\sb{1})} lands in \w[,]{\Hom(X,e\sp{\ast}M)} for
  \w{e\sb{0}=tq} and \w{\zvt\sb{1}} as in \wref[.]{sbs-map-zvt-eq5}
\end{lemma}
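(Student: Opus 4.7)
My plan is to verify two properties of $\zvt=(e\sb{0},\zvt\sb{1})$: that it defines a morphism of internal groupoids $X\to e\sp{\ast}M$, and that it lies in the slice $\Gpd\,\clC/X$ in the sense required by \eqref{eqabbrev}.

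I will first unpack \eqref{sbs-map-zvt-eq5} and exploit the fact that the abelian-group operations $\mu'$ and $i'$ on $e\sp{\ast}M$ are themselves morphisms of internal groupoids, hence commute with face, degeneracy, and composition maps. This will reduce the simplicial compatibility checks to a computation for $v$. From \eqref{sbs-map-zvt-eq3} one reads off $\zr'\sb{1}\,v=\zD\sb{X\sb{0}}\,e\sb{0}$; postcomposing with $d\sb{i}$ and using $d\sb{i}\,\zD=\Id$ gives $\zr'\sb{0}\,d\sb{i}\,v=e\sb{0}$, so $d\sb{i}\,v=e\sb{0}=\zvt\sb{0}$ under the identification $(e\sp{\ast}M)\sb{0}\cong X\sb{0}$ via $\zr'\sb{0}$. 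For the degeneracy, $\zvt\sb{1}\,s\sb{0}(a)=\mu'(v(a),i'\,v(a))$ is the zero of the abelian group in the fiber over $\zD e\sb{0}(a)$; by \eqref{sbs-modules-eq3} together with the explicit description of $\zs'$ given just after \eqref{estarmA}, this zero equals $s\sb{0}(e\sb{0}\,a)=s\sb{0}\,\zvt\sb{0}(a)$.

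For the slice condition I will invoke the abelian-group axiom $\zr'\sb{1}\,\mu'=\zr'\sb{1}\,\pr\sb{1}$, which lets the structure map $\zr'\sb{1}$ absorb both the inversion and the multiplication:
\begin{equation*}
\zr'\sb{1}\,\zvt\sb{1} \ =\ \zr'\sb{1}\,\mu'\,(\Id,i')\,(v,v) \ =\ \zr'\sb{1}\,v\,\pr\sb{1} \ =\ \zD\sb{X\sb{0}}\,e\sb{0}\,\pr\sb{1}.
\end{equation*}
Evaluated on $(a,b)\in X\sb{0}\tiund{q}X\sb{0}$ this gives $(e\sb{0}a,e\sb{0}a)$; since $q(a)=q(b)$ forces $e\sb{0}a=e\sb{0}b$, it also equals $(e\sb{0}a,e\sb{0}b)=e\sb{1}(a,b)$, the required composite in the slice structure of \eqref{eqabbrev}. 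Combined with the analogous identity $\zr'\sb{0}\,\zvt\sb{0}=e\sb{0}$ in degree zero, this will place $\zvt$ in $\Gpd\,\clC/X$, and compatibility with the groupoid composition on $e\sp{\ast}M$ will follow automatically from $\mu'$ and $i'$ being maps of internal groupoids.

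The main obstacle, I expect, will be the bookkeeping in the pullback $e\sp{\ast}M$: interpreting $(e\sp{\ast}M)\sb{1}\tiund{X\sb{1}}(e\sp{\ast}M)\sb{1}$ correctly as the fiber product over $\zr'\sb{1}$, and checking that the two copies of $v$ in $(v,v)$ land in the same fiber of $\zr'\sb{1}$ once restricted to $X\sb{1}=X\sb{0}\tiund{q}X\sb{0}$.
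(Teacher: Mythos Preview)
Your checks for the face maps, the degeneracy, and the slice condition $\zr'\sb{1}\zvt\sb{1}=e\sb{1}$ are correct and proceed along the same lines as the paper (the paper computes elementwise where you invoke functoriality of $\mu'$ and $i'$, but the content is the same).

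The gap is in your final sentence: compatibility with groupoid composition does \emph{not} follow automatically from $\mu'$ and $i'$ being internal functors. The problem is that $(v,v):X\sb{1}\to(e\sp{\ast}M)\sb{1}\tiund{X\sb{1}}(e\sp{\ast}M)\sb{1}$ is \emph{not} the $1$-component of a map of internal groupoids: on a composable triple $(a,b,c)$ one has $(v,v)(a,c)=(v(a),v(c))$, whereas composing $(v(a),v(b))$ with $(v(b),v(c))$ componentwise in $e\sp{\ast}M\tiund{X}e\sp{\ast}M$ gives $(v(a)\circ v(b),\,v(b)\circ v(c))$, which is not the same. So postcomposing with the groupoid maps $(\Id,i')$ and $\mu'$ does not by itself produce a functor.

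What actually makes composition work is a cancellation. Using interchange (which \emph{is} what ``$\mu'$ is a groupoid map'' gives you), one has
\[
\mu'\sb{1}(v(a),i'\sb{1}v(b))\circ\mu'\sb{1}(v(b),i'\sb{1}v(c))
=\mu'\sb{1}\bigl(v(a)\circ v(b),\ i'\sb{1}v(b)\circ i'\sb{1}v(c)\bigr),
\]
and then one needs the Eckmann--Hilton argument in the fibre $M\sb{1}(tqa,tqa)$ (where all of $v(a),v(b),v(c)$ land) to identify $\circ$ with $\mu'\sb{1}$ and collapse the middle terms: writing additively, $(v(a)-v(b))+(v(b)-v(c))=v(a)-v(c)$. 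This is exactly what the paper does via the interchange rule. You should add this step; without it the functoriality claim is unsupported.
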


\begin{proof}
By \wref{sbs-modules-eq2} we have \w[,]{\zs'\sb{1} \zr'\sb{1} = \mu'\sb{1}(\Id,i'\sb{1})\zD\sb{(e\sp{\ast} M)\sb{1}}} so
\begin{myeq}\label{sbs-map-zvt-eq7}
\begin{gathered}
\xymatrix{
  \Sc{X\sb{0}} \ar^(0.4){v}[r] \ar_{e\sb{1}\zD\sb{X\sb{0}}}[rd] & \Sc{(e\sp{\ast} M)\sb{1}} \ar^(0.3){\zD\sb{(e\sp{\ast} M)\sb{1}}}[r]\ar^{\zr'\sb{1}}[d] &
  \Sc{(e\sp{\ast} M)\sb{1}\tiund{X\sb{1}}(e\sp{\ast} M)\sb{1}} \ar^{(\Id,i\sb{1})}[r] &
  \Sc{(e\sp{\ast} M)\sb{1}\tiund{X\sb{1}}(e\sp{\ast} M)\sb{1}} \ar^(0.6){\mu\sb{1}}[r] & \Sc{(e\sp{\ast} M)\sb{1}}\\
& \Sc{X\sb{1}} \ar_{\zs'\sb{1}}[rrru]
}
\end{gathered}
\end{myeq}
\nid commutes, as does
\begin{myeq}\label{sbs-map-zvt-eq8}
\begin{gathered}
  \xymatrix@C=4.5pc@R=1.7pc{
X\sb{0} \ar^{\zD\sb{X\sb{0}}}[r] \ar_{v}[d] & X\sb{1} \ar^{(v,v)}[d]\\
(e\sp{\ast} M)\sb{1} \ar_(0.4){\zD\sb{(e\sp{\ast} M)\sb{1}}}[r] & (e\sp{\ast} M)\sb{1}\tiund{X\sb{1}}(e\sp{\ast} M)\sb{1}
}
\end{gathered}
\end{myeq}
\nid so by \wref[,]{sbs-map-zvt-eq7} \wref[,]{sbs-map-zvt-eq8} \wref[,]{sbs-modules-eq3} and
the definition of \w{\zvt\sb{1}(f)} we see that
\begin{myeq}\label{sbs-map-zvt-eq9}
  \zvt\sb{1}(f)\zD\sb{X\sb{0}}=  \mu\sb{1}(\Id,i\sb{1})(v,v)\zD\sb{X\sb{0}}=\zs'\sb{1} e\sb{1}\zD\sb{X\sb{0}}=\zs'\sb{1}\zD\sb{X\sb{0}}e\sb{0}=s'\sb{0} e\sb{0}
\end{myeq}

By \wref[,]{sbs-modules-eq4} for each \w{(a,b)\in X\sb{0}\tiund{q}X\sb{0}} (with \w[)]{tqa=tqb} we have
\begin{myeq}\label{sbs-map-zvt-eq10prev}
  \zvt\sb{1}(f)(a,b)= ((tqa,tqa),\mu\sb{1}(k\sb{1} f a, i\sb{1} k\sb{1} f b))~,
\end{myeq}
so \w{d'\sb{0}\zvt\sb{1}(f)(a,b)=tqa=e\sb{0}\pr\sb{0}(a,b)} and \w[.]{d'\sb{1}\zvt\sb{1}(f)(a,b)=tqb=e\sb{0}\pr\sb{1}(a,b)}
Thus
\begin{myeq}\label{sbs-map-zvt-eq10}
  d_i\zvt\sb{1}(f)=e\sb{0}\pr_i
\end{myeq}
\nid for \w[.]{i=0,1}
Thus \eqref{sbs-map-zvt-eq9} and \eqref{sbs-map-zvt-eq10} show that
\begin{myeq}\label{sbs-map-zvt-eq6}
\begin{gathered}
\xymatrix@C=4.5pc@R=2pc{
  X\sb{1} \ar^(0.4){\zvt\sb{1}(f)}[r] \ar@<-1.0ex>_{\pr_0}[d] \ar^{\pr\sb{1}}[d] &
  (e\sp{\ast}M)\sb{1} \ar@<-1ex>_{d'_0}[d] \ar^{d'\sb{1}}[d] \\
X_0 \ar@<-1ex>@/_1.1pc/_{\zD\sb{X_0}}[u] \ar_{e_0}[r] & X_0 \ar@<-1ex>@/_0.8pc/_{s'_0}[u]
}
\end{gathered}
\end{myeq}
\nid commutes.
Now, given \w{(a,b,c)\in X\sb{0}\tiund{q}X\sb{0}\tiund{q}X\sb{0}} (with \w[),]{tqa=tqb=tqc} we have
$$
c'(\zvt\sb{1}(f)(a,b)\times \zvt\sb{1}(f)(b,c))= ((tqa,tqa),\mu\sb{1}(k\sb{1} f a,i\sb{1} k\sb{1} fb)\circ \mu\sb{1}(k\sb{1} f b, i\sb{1} k\sb{1} f c))
$$
\nid while \w[,]{\zvt\sb{1}(f)c''(a,b,c)=\zvt\sb{1}(f)(a,c)=((tqa,tqa),\mu\sb{1}(k\sb{1} fa,i\sb{1} k\sb{1} f c))} where we denoted by \w{c'':X_1 \tiund{X_0}X_1 \rw X_1} and \w{c':(e^*M)_1 \tiund{X_0} (e^*M)_1} the compositions.

Since \w{\mu\sb{1}} is a map of groupoids, by the interchange rule we see that
$$
\mu\sb{1}(k\sb{1} f a,i\sb{1} k\sb{1} f b)\circ \mu\sb{1}(k\sb{1} f b,i\sb{1} k\sb{1} f c)=\mu\sb{1}(k\sb{1} f a,i\sb{1} k\sb{1} f c)
$$
\nid where $\circ$ the groupoid composition.

Finally, \w[,]{\zr'\sb{1}\circ\zvt\sb{1}(f)=e\sb{1}} so \w{(e\sb{0},\zvt\sb{1}(f))} is indeed
a morphism in \w[.]{\Gpd\,\clC/X}
\end{proof}

\begin{definition}\label{dxi}
The pullback
\begin{myeq}\label{tjstarm}
\begin{gathered}
\xymatrix@C=3pc@R=1.7pc{
t\sp{\ast}j\sp{\ast} M \ar^{l}[r] \ar_{r}[d] & j\sp{\ast} M\ar^{\zl}[d]\\
d \pi\sb{0}  \ar_{\dd t}[r] & \dX\sb{0}
}
\end{gathered}
\end{myeq}
\nid in \w{\Gpd\,\clC/d\pi\sb{0}} gives rise to a pullback
\begin{myeq}\label{sbs-map-zvt-eq12}
\begin{gathered}
\xymatrix@C=3pc@R=1.6pc{
t\sp{\ast}j\sb{1}\sp{\ast} M\sb{1} \ar^{l\sb{1}}[r] \ar_{r\sb{1}}[d] & j\sb{1}\sp{\ast} M\sb{1}\ar^{\zl\sb{1}}[d]\\
\pi\sb{0}  \ar_{t}[r] &  X\sb{0}
}
\end{gathered}
\end{myeq}
\nid in $\clC$, so we may define
\begin{myeq}\label{def-of-xi}
  \xi:\Hom(\pi\sb{0},\,t\sp{\ast} j\sb{1}\sp{\ast}M\sb{1})\to \Hom(X\sb{0},\,j\sb{1}\sp{\ast} M\sb{1})
\end{myeq}
by sending
\w[,]{\parbox{20mm}{\xymatrix@C=1.5pc@R=1.0pc{\pi\sb{0}  \ar^{f}[rr] \ar_{\Id}[rd] & &
      t\sp{\ast} j\sb{1}\sp{\ast} M\sb{1} \ar^{r\sb{1}}[ld]\\ & \pi\sb{0}  &}}} to the map \w{\xi(f)} given by
\begin{myeq}\label{sbs-map-zvt-eq13}
\begin{gathered}
\xymatrix@C=2pc@R=1.6pc{
X\sb{0} \ar^{l\sb{1} f q}[rr] \ar_{tq}[rd] & &  j\sb{1}\sp{\ast} M\sb{1} \ar^{\zl\sb{1}}[ld]\\
& X\sb{0}  &
}
\end{gathered}
\end{myeq}
\end{definition}

\begin{proposition}\label{pro-short-exact-1}
  Given \w{X=H(Y)\in \Gpd\,\clC} as in \wref{splitobj} and
  \w[,]{M\in [(\Gpd\,\clC,X\sb{0})/X]\sb{\ab}} there is a short exact sequence of abelian groups
\begin{equation*}
  0~\to~\Hom(\pi\sb{0},\,t\sp{\ast} j\sb{1}\sp{\ast}M\sb{1})~\xrw{\xi}~
  \Hom(X\sb{0},j\sp{\ast}M\sb{1})~\xrw{\zvt}~\Hom(X,e\sp{\ast}M)~\to~0~,
\end{equation*}
\nid in the notation of \wref[,]{eqabbrev} where $\xi$ is as in \wref{def-of-xi} and $\zvt$ is as in
Lemma \ref{lem-sbs-map-zvt-1}.
\end{proposition}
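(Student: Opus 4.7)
The plan is to verify each of the three statements separately: injectivity of $\xi$, exactness at the middle term $\Hom(X\sb{0},j\sb{1}\sp{\ast}M\sb{1})$, and surjectivity of $\vartheta$. The two main tools are the section $t$ of $q$ and the fact that inside the abelian group object $M$ the internal groupoid composition on each automorphism fibre $M\sb{1}(x,x)$ coincides with the abelian-group multiplication $\mu\sb{1}$.

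For injectivity, suppose $\xi(f)=l\sb{1}fq$ equals the zero section. Post-composing with $t$ and using $qt=\Id\sb{\pi\sb{0}}$ yields $l\sb{1}f=0$, and together with $r\sb{1}f=\Id\sb{\pi\sb{0}}$ the universal property of the pullback \eqref{sbs-map-zvt-eq12} then forces $f=0$. For exactness at the middle, note that $\xi(f)=l\sb{1}fq$ factors through $q$, so the map $v$ of \eqref{sbs-map-zvt-eq3} associated to $\xi(f)$ satisfies $v(a)=v(b)$ whenever $qa=qb$; hence for $(a,b)\in X\sb{1}=X\sb{0}\tiund{q}X\sb{0}$ we get $\mu\sb{1}(v(a),i\sb{1}v(b))=\mu\sb{1}(v(a),i\sb{1}v(a))$, which by \eqref{sbs-modules-eq2} is the zero section at $e\sb{1}(a,b)$, so $\vartheta\xi=0$. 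Conversely, if $\vartheta(g)=0$ then $v(a)=v(b)$ whenever $qa=qb$, so $v$ factors as $\tilde v\circ q$, and threading $\tilde v$ through \eqref{sbs-map-zvt-eq12} yields an $f\in\Hom(\pi\sb{0},t\sp{\ast}j\sb{1}\sp{\ast}M\sb{1})$ with $\xi(f)=g$.

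For surjectivity, given $h=(e\sb{0},h\sb{1})\in\Hom(X,e\sp{\ast}M)$ one defines
\[
g\ :=\ \bigl(tq,\ r\sb{1}\circ h\sb{1}\circ(\Id\sb{X\sb{0}},tq)\bigr)\colon X\sb{0}\longrightarrow X\sb{0}\tiund{X\sb{1}}M\sb{1}=j\sb{1}\sp{\ast}M\sb{1}\,,
\]
which is legitimate because $q(tqa)=qa$ and $e\sb{1}(a,tqa)=\Delta\sb{X\sb{0}}(tqa)$, supplying the pullback condition for $j\sb{1}\sp{\ast}M\sb{1}$. To check $\vartheta(g)=h$, one uses that $h\sb{1}$ is a morphism of internal groupoids: for $(a,b)\in X\sb{1}$ with $c:=tqa=tqb$ we have $(a,b)=(a,c)\cdot(c,c)\cdot(c,b)$ in $X$, while $h\sb{1}(c,c)$ is the identity at $c$ in $e\sp{\ast}M$ and $h\sb{1}(c,b)=h\sb{1}(b,c)\sp{-1}$. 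Combining these and using the coincidence of groupoid composition with $\mu\sb{1}$ on the fibre $M\sb{1}(c,c)$, one obtains $h\sb{1}(a,b)=\mu\sb{1}(k\sb{1}g(a),i\sb{1}k\sb{1}g(b))$, which is exactly $\vartheta\sb{1}(g)(a,b)$ by \eqref{sbs-map-zvt-eq10prev}.

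The main obstacle is that all the above element-wise calculations must be written as commutative diagrams in $\clC$: the Eckmann--Hilton identification of groupoid composition with abelian multiplication on $M\sb{1}(c,c)$, the cocycle identity for $h\sb{1}$, and the factorisation of $v$ through $q$ all have to be encoded via pullbacks and universal properties rather than by set-theoretic reasoning. Once this bookkeeping is in place each step reduces to a routine diagram chase.
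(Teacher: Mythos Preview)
Your proof is correct and follows the same plan as the paper: injectivity of $\xi$ via the section $t$ and the pullback property, exactness at the middle via factorisation through the coequalizer $q$, and surjectivity of $\vartheta$ via the Eckmann--Hilton identification of groupoid composition with $\mu\sb{1}$ on the automorphism fibres $M\sb{1}(c,c)$. The only notable difference is in the surjectivity step: the paper routes the construction of the preimage through the Bourne adjunction $H\dashv R$ (passing to the $\Spl\clC$-adjoint of $\phi$ and then back), whereas you write down $g=(tq,\,r\sb{1}\circ h\sb{1}\circ(\Id\sb{X\sb{0}},tq))$ directly; your version is more transparent but lands in the same place, and both rely on Eckmann--Hilton for the verification $\vartheta(g)=h$. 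One small slip: where you say ``post-composing with $t$'' you mean pre-composing, since $t\colon\pi\sb{0}\to X\sb{0}$ and you are cancelling the $q$ on the right of $l\sb{1}fq$.
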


\begin{proof}
  We first show that \w[:]{\mathrm{Im}\,\xi\subseteq \mathrm{\ker}\zvt} Given
  \w{f':\pi\sb{0} \to t\sp{\ast} j\sb{1}\sp{\ast} M\sb{1}} in \w[,]{\Hom(\pi\sb{0},\,t\sp{\ast} j\sb{1}\sp{\ast}M\sb{1})}
  the map \w{\xi(f')\in \Hom(X\sb{0},j\sp{\ast}M\sb{1})}
  is given by the composite \w{X\sb{0} \xrw{q} \pi\sb{0}  \xrw{f'} t\sp{\ast} j\sb{1}\sp{\ast} M\sb{1} \xrw{l\sb{1}} j\sb{1}\sp{\ast} M\sb{1}}

By \wref{sbs-map-zvt-eq10prev} and \wref[,]{sbs-map-zvt-eq13} for each \w{(a,b)\in X\sb{1}} we have
\begin{equation*}
  \zvt\sb{1}(\xi(f'))(a,b)=\{(tqa,tqb),\mu\sb{1}(k\sb{1} l\sb{1} f' q(a),i\sb{1} k\sb{1} l\sb{1} f' q(b))\}\;.
\end{equation*}
\nid Since \w[,]{q(a)=q(b)} we have \w[,]{\zvt\sb{1}(\xi(f'))(a,b)=((tqa,tqb),0)}
which is the zero map of \w[.]{\Hom\sb{\Sz\Gpd\,\clC/X}(X, e\sp{\ast} M)} This shows that
\w[.]{\mathrm{Im}\;\xi\subseteq \ker \zvt}

Given \w{g':  X\sb{0} \to j\sb{1}\sp{\ast} M\sb{1} } (as in \wref[)]{sbs-map-zvt-eq13} in \w[,]{\ker\zvt\sb{1}} for all
\w{(a,b)\in X\sb{1}} we have
\begin{equation*}
  \zvt(g')(a,b)= \{(tqa,tqa),\mu\sb{1}(k\sb{1} g'(a), i\sb{1} k\sb{1} g'(b)\}= \{(tqa,tqb),0\}\;.
\end{equation*}
\nid Thus \w[,]{k\sb{1} g' pr\sb{0}(a,b)=k\sb{1} g' a =k\sb{1} g' b = k\sb{1} g' pr\sb{1}(a,b)} so that
\begin{myeq}\label{short-exact-eq1}
  k\sb{1} g' pr\sb{0} = k\sb{1} g' pr\sb{1}\;.
\end{myeq}
\nid Since \w{\xymatrix{
    X\sb{1} \ar@<0.5ex>^{pr\sb{0}}[r]\ar@<-0.5ex>_{pr\sb{1}}[r] & X\sb{0} \ar^{q}[r] & \pi\sb{0}}} is a coequalizer, it follows
from \eqref{short-exact-eq1} that there is a map \w{f:\pi\sb{0} \rw M\sb{1}} with \w[,]{f q=k\sb{1} g'}
and thus \w[,]{f=fqt=k\sb{1} g' t} so
\w[.]{\zr\sb{1} f=\zr\sb{1} k\sb{1} g' t=\zD\sb{X\sb{0}}\zl\sb{1} g' t = \zD\sb{X\sb{0}}t q t=\zD\sb{X\sb{0}}t}
Hence there is \w{\ovl{f}:\pi\sb{0} \rw j\sb{1}\sp{\ast} M\sb{1}} defined by \w{f\top t:\pi\sb{0}\to M\sb{1}\times X\sb{0}} into the
pullback \wref[.]{eqjone} Since \w[,]{\zl\sb{1} \ovl{f}=t} there is also a map \w{f':\pi\sb{0} \rw t\sp{\ast} j\sb{1}\sp{\ast} M\sb{1}}
defined by \w{\ovl{f}\top \Id:\pi\sb{0}\to j^{\ast}\sb{1}M\sb{1}\times\pi\sb{0}} into the
pullback \wref[.]{sbs-map-zvt-eq12}

By \wref{short-exact-eq1} and the above we have \w[.]{k\sb{1} g' = fq = k\sb{1} \ovl{f} q}
Since \w{k\sb{1}} is monic, this implies that \w[,]{g'=\ovl{f}q}
and since \w[,]{\ovl{f}= l\sb{1} f'} also \w[.]{g' = l\sb{1} f' q = \zvt\sb{1}(f')}
This shows that \w[.]{\ker \zvt \subseteq \mathrm{Im}\,\xi} In conclusion, \w[.]{\ker \zvt= \mathrm{Im}\,\xi}

To show that $\xi$ is monic, assume given
\w{f, g \in \Hom(\pi\sb{0},\,t\sp{\ast} j\sb{1}\sp{\ast}M\sb{1})} with \w[.]{\xi f=\xi g}
Then \w[,]{l\sb{1} f q = l\sb{1} g q} which implies that
\begin{myeq}\label{short-exact-eq3}
  l\sb{1} f~=~l\sb{1} g~,
\end{myeq}
\nid since $q$ is epic. Also, \w{\zr\sb{1} k\sb{1} l\sb{1} f = \zD\sb{X\sb{0}} t s\sb{1} f} and \w[,]{\zr\sb{1} k\sb{1} l\sb{1} g = \zD\sb{X\sb{0}} t s\sb{1} g}
so by \wref{short-exact-eq3} we have  \w{\zD\sb{X\sb{0}}t s\sb{1} f = \zD\sb{X\sb{0}} t s\sb{1} g} and therefore
\w[.]{s\sb{1} f = q pr\sb{0} \zD\sb{X\sb{0}} t s\sb{1} f = q pr\sb{0} \zD\sb{X\sb{0}} t s\sb{1} g = s\sb{1} g}
By the definition of \w{t\sp{\ast} j\sb{1}\sp{\ast} M\sb{1}} as the pullback \wref[,]{sbs-map-zvt-eq12}
together with \wref{short-exact-eq3} this implies that \w[.]{f=g} Thus $\xi$ is a monomorphism.

To show that $\zvt$ is onto, assume given  \w{\zf\in\Hom\sb{\Gpd\,\clC/X}(X, e\sp{\ast} M)} (so that
\w[).]{\zr'\circ\phi=e} The adjoint of $\zf$ in
\w[,]{\Hom\sb{\Sz{\Spl \clC} /Y} (Y,\,R(e\sp{\ast} M))} for $Y$ as in \wref[,]{splitobj} is given by a commuting
triangle in \w[:]{\Spl\clC}
\begin{myeq}\label{short-exact-eq6}
\begin{gathered}
\xymatrix@C=5pc@R=1.7pc{
  X\sb{0} \ar^{g}[rr] \ar@<-0.5ex>_{q}[d] \ar_(0.7){(t,tq)}[rdd] & &
  (e\sp{\ast} M)\sb{1} \ar@<-0.5ex>_{d\sb{0}}[d] \ar^(0.7){\zr'\sb{1}}[ldd] \\
\pi\sb{0} \ar@<-0.5ex>_{t}[u] \ar_{t}[rr] \ar_{t}[rdd] & & X\sb{0}  \ar@<-0.5ex>_{s\sb{0}}[u] \ar^{\Id}[ldd]\\
& X\sb{1} \ar@<-0.5ex>_(0.4){pr\sb{0}}[d] &\\
& X\sb{0}  \ar@<-0.5ex>_(0.6){\zD\sb{X\sb{0}}}[u] &
}
\end{gathered}
\end{myeq}
\nid The adjoint $\phi$ of \w{(g,t)} is given by postcomposing with the counit
of \w[,]{H \dashv R} so $\phi$ is the horizontal composite in:
\begin{myeq}\label{short-exact-eq7}
\begin{gathered}
\xymatrix@C=5pc@R=1.8pc{
X_0\tiund{q}X_0 \ar^(0.4){(g,g)}[r] \ar@<-1.0ex>_*-<0.55em>{\sb{pr_0}}[d] \ar^*-<0.50em>{\sb{pr\sb{1}}}[d] \ar_(0.7){g}[rdd] & (e^* M)\sb{1}\tiund{d'_0}(e^*M)\sb{1} \ar^{\mu\sb{1}}[r] \ar@<-1.0ex>_{pr_0}[d] \ar^{pr\sb{1}}[d] & (e^* M)\sb{1} \ar@<-1ex>_{d'_0}[d] \ar^{d'\sb{1}}[d] \ar^(0.7){\zr'\sb{1}}[ldd] \\
X_0 \ar@<2ex>@/^0.8pc/^{\zD\sb{X_0}}[u] \ar_(0.4){g}[r] \ar_{e_0}[rdd] & (e^* M)\sb{1} \ar_(0.6){d'\sb{1}}[r] \ar@<-1.5ex>@/_0.8pc/_{\zD\sb{(e^* M)\sb{1}}}[u] & X_0 \ar@<-1ex>@/_0.8pc/_{\,s'_0}[u] \ar^{\Id}[ldd]\\
& X_0\tiund{q}X_0 \ar@<-1.0ex>_*-<0.50em>{\sb{pr_0}}[d] \ar^*-<0.50em>{\sb{pr\sb{1}}}[d]
&\\
& X_0 \ar@<-1ex>@/_0.8pc/_(0.65){\zD\sb{X_0}}[u]&
}
\end{gathered}
\end{myeq}
\nid By \S \ref{sbs-bourne}, the counit $\mu$ is \w[.]{\mu\sb{1}\{((a,b),m),((a,b),m')\}= ((a,b),m\circ m')}
Since \w{t\sb{1} tq=\zD tq} by \wref[,]{sbs-bourne-eq5}) by \wref{short-exact-eq6} we have
\w[.]{\zr'\sb{1} g= t\sb{1} tq = \zD tq = \zD e\sb{0}}

We have a map \w{g:X\sb{0}\to(e^{*}M)\sb{1}} into the pullback square of \wref{sbs-map-zvt-eq3} given by
\w[.]{\sigma\sb{1} j\sb{1}\top t\sb{1} tq:X\sb{0}\to M\sb{1}\times X\sb{1}}
This in turn defines \w[,]{g':X\sb{0} \rw j\sp{\ast}(e\sp{\ast} M)\sb{1}} given by another pullback:
\begin{equation*}
\xymatrix@C=3pc@R=1.7pc{
X\sb{0} \ar@/^1pc/^{g}[rrd] \ar^{g'}[rd] \ar@/_1pc/_{tq}[rdd]& & \\
& j\sp{\ast}(e^{*}M)\sb{1} \ar^{k\sb{1}}[r] \ar_{}[d] & (e\sp{\ast} M)\sb{1} \ar^{\zr'\sb{1}}[d] \\
& X\sb{0} \ar_{\zD\sb{X\sb{0}}}[r] & X\sb{1}
}
\end{equation*}

We define \w{g'\sb{M\sb{1}}:X\sb{0}\to j^{\ast}M\sb{1}} into the pullback \wref{eqjone} by
  \w[,]{\sigma\sb{1} j\sb{1}\top tq:X\sb{0}\to M\sb{1}\times X\sb{0}} since \w[.]{\zD\sb{X\sb{0}}tq=\zr\sb{1}\sigma\sb{1} j\sb{1}}
By the definitions of $g$, \w[,]{g'\sb{M\sb{1}}} and $\zvt$, for each
\w{(a,b)\in X\sb{1}} we have \w[,]{\zvt(g'\sb{M\sb{1}})(a,b)=\{(tqa,tqb),\mu\sb{1}(g'\sb{M\sb{1}}(a), i\sb{1} k\sb{1} g'\sb{M\sb{1}}(b))\}}
while by \wref{short-exact-eq7} we have:
\begin{equation*}
\begin{split}
  \zf(a,b)= & \{(tqa, tqb),\eta\sb{1}((tqa,tqb), k\sb{1} g'\sb{M\sb{1}}(a), (tqa,tqb), k'\sb{1} g'\sb{M\sb{1}}(b)\}= \\
    = & ((tqa,tqb), g'\sb{M\sb{1}}(a)\circ i\sb{1} k\sb{1} g'\sb{M\sb{1}}(b))~,
\end{split}
\end{equation*}
with \w[.]{g'\sb{M\sb{1}}(a)\circ i\sb{1} k\sb{1} g'\sb{M\sb{1}}(b)\in M\sb{1}(tqa,tqb)} By the Eckmann-Hilton argument,
the abelian group structure on \w{M\sb{1}(tqa,tqb)} is the same as the groupoid structure, so
\w[.]{g'\sb{M\sb{1}}(a)\circ i\sb{1} k\sb{1} g'\sb{M\sb{1}}(b)=\mu\sb{1}(k\sb{1}g'\sb{M\sb{1}}(a), i\sb{1} k\sb{1} g'\sb{M\sb{1}}(b))}
We conclude that \w{\zvt(g')(a,b) = \zf(a,b)} for each \w[,]{(a,b)\in X\sb{1}} so \w[,]{\zvt(g')=\zf} as required.
\end{proof}

We now rewrite the map $\zvt$ of Proposition \ref{pro-short-exact-1} in a different form
(see Lemma \ref{lem-maps-theta-xi-3}). This will be used in Proposition \ref{pro-maps-theta-xi-1}
in the case \w[:]{\clC=\Cath}

\begin{definition}\label{dphi}
Given \w{X=H(Y)\in \Gpd\,\clC} as in \wref[,]{splitobj} we define a map
\begin{myeq}\label{def-thetaprime}
  \zvt':\Hom\sb{\clC/X\sb{0}}(X\sb{0},\,j\sb{1}\sp{\ast} M\sb{1})\rw \Hom\sb{\Sz\Gpd\, \clC/X}(X,M)
\end{myeq}
\nid as follows: the pullback square \wref{eqjone} implies that a  map \w{X\sb{0}\to j\sb{1}\sp{\ast} M\sb{1}} is
given by \w{f:X\sb{0}\to M\sb{1}}  with \w[.]{\zr\sb{1} f=\zD\sb{X\sb{0}}}
We then define \w{\bar\zvt(f):X\sb{1}\rw M\sb{1}} by \w[,]{\bar\zvt(f)(a,b)=\zs(a,b)\circ f(b)-f(a)\circ \zs(a,b)}
where $\circ$ is the groupoid composition in $M$ and the subtraction is that of the abelian group
object $M$. This gives rise to a map
\begin{equation*}
\xymatrix@C=4pc@R=2pc{
X\sb{1} \ar^{\bar\zvt(f)}[r] \ar@<-1ex>_{pr\sb{0}}[d] &  M\sb{1} \ar@<-1ex>_{d\sb{0}}[d]\\
X\sb{0} \ar_{\Id}[r]\ar@<-1ex>_{\zD\sb{X\sb{0}}}[u] & X\sb{0} \ar@<-1ex>_{s\sb{0}}[u]
}
\end{equation*}
\nid in \w[,]{\Spl\clC}
where for each \w[,]{(a,b)\in X\sb{1}} with \w{d\sb{0}\bar{\zvt}(f)(a,b)=(\Id\circ p\sb{0})(a,b)=a} we have
\begin{equation*}
    \bar\zvt(f) \zD\sb{X\sb{0}}(a)=\bar\zvt(f)(a,a)=\zs(a,a) f(a)-f(a)\zs(a,a)\\
    = O_M=\zs(a,a)=s\sb{0}(a).
\end{equation*}
\nid Here \w{\zs \zD\sb{X\sb{0}}= s\sb{0}: X\sb{0}\rw M\sb{1}} because \w{(\zs,\Id):X\rw M} is a map of groupoids.
Thus we have a map in \w{\Hom\sb{\Sz{\Spl}\clC/RHY}(Y,\, R (M))}
given by the composite
\begin{equation*}
\xymatrix@C=3.5pc@R=1.6pc{
  X\sb{0} \ar^{t\sb{1}}[r] \ar@<-1ex>_{q}[d] & X\sb{1} \ar^{\bar\zvt(f)}[r] \ar@<-1ex>_{pr\sb{0}}[d] &
  M\sb{1} \ar@<-1ex>_{d\sb{0}}[d]\\
\pi\sb{0} \ar_{t}[r]\ar@<-1ex>_{t}[u] & X\sb{0} \ar_{\Id}[r] \ar@<-1ex>_{\zD\sb{X\sb{0}}}[u] & X\sb{0} \ar@<-1ex>_{s\sb{0}}[u]
}
\end{equation*}
\nid By \S \ref{sbs-bourne} this corresponds to the map
\w{\zvt'(f)$ in $\Hom\sb{\Sz\Gpd \clC/X}(X,M)} with \w[.]{\zvt'(f)\sb{0}=\Id} and \w[.]{\zvt'(f)\sb{1}=\bar\zvt(f)}
We define
\begin{myeq}\label{def-phi}
  \zf: \Hom\sb{\Sz\Gpd\, \clC/X} ((X\!\xrw{\Id}\! X),(M\!\xrw{\zr}\!X))\to
    \Hom\sb{\Sz\Gpd\, \clC/X} ((X\!\xrw{e}\!X),(e\sp{\ast} M\!\xrw{\zr'}\!X))
\end{myeq}
as follows. By \S \ref{sbs-bourne}, an element of
\w{\Hom\sb{\Sz\Gpd\, \clC/X} ((X\!\xrw{\Id}\! X),(M\!\xrw{\zr}\!X))} is determined by the map
\begin{myeq}\label{eq-maps-theta-xi-10}
\begin{gathered}
\xymatrix@C=4pc@R=1.7pc{
X\sb{1} \ar^{g}[r] \ar@<-1ex>_{pr\sb{0}}[d] &  M\sb{1} \ar@<-1ex>_{d\sb{0}}[d]\\
X\sb{0} \ar_{\Id}[r]\ar@<-1ex>_{\zD\sb{X\sb{0}}}[u] & X\sb{0} \ar@<-1ex>_{s\sb{0}}[u]
}
\end{gathered}
\end{myeq}
\nid in \w[.]{\Spl\clC} We associate to this another map in \w[:]{\Spl\clC}
\begin{myeq}\label{eq-maps-theta-xi-11}
\begin{gathered}
\xymatrix@C=4pc@R=1.7pc{
X\sb{1} \ar^{\widetilde\zf(g)}[r] \ar@<-1ex>_{pr\sb{0}}[d] & e\sp{\ast}M\sb{1} \ar@<-1ex>_{d'\sb{0}}[d]\\
X\sb{0} \ar_{e\sb{0}}[r]\ar@<-1ex>_{\zD\sb{X\sb{0}}}[u] & X\sb{0} \ar@<-1ex>_{s'\sb{0}}[u]
}
\end{gathered}
\end{myeq}
\nid where \w{\widetilde\zf(g)} into the pullback square of \wref{sbs-map-zvt-eq3} is
given by \w[,]{\psi_g\top e\sb{1}:X\sb{1}\to M\sb{1}\times X\sb{1}} with \w[.]{\psi_g(a,b)=\zs(tqa,a)g(a,b)\zs(b,tqb)}
Note that, since \w[:]{\zr\sb{1}(m)=(\pt\sb{0} m,\pt\sb{1} m)}
$$
\zr\sb{1} \psi\sb{g}(a,b)=\zr\sb{1}\zs\sb{1}(tqa,a)\zr\sb{1} g(a,b)\zr\sb{1}\zs(b,tqb)=(tqa,a)(a,b)(b,tqb)=e\sb{1}(a,b)~.
$$

We may check the commutativity of \wref[,]{eq-maps-theta-xi-11} using \wref[.]{estarmA}
The map \w{(e\sb{0}, \widetilde{\zf}(g))} in \wref{eq-maps-theta-xi-11} then defines an element \w{\zf(g)}
in \w[.]{\Hom\sb{\Sz\Gpd\, \clC/X} ((X\!\xrw{e}\!X),(e\sp{\ast} M\!\xrw{\zr'}\!X))}
\end{definition}

\begin{lemma}\label{lem-maps-theta-xi-3}
For \w{\zvt'} as in \wref{def-thetaprime} and $\zf$ as in \wref[,]{def-phi} the diagram
\mysdiag[\label{eq-maps-theta-xi-12}]{
    \Hom\sb{\clC/X\sb{0}}((X\!\xrw{\Id}\!X),(j\sp{\ast} M\!\xrw{\zl}\!X\sb{0}) \ar[rr]^(0.59){\zvt'} \ar^{(tq)\sp{\ast}}[d] &&
  \Hom\sb{\Sz\Gpd\,\clC/X}(X,M)\ar^{\zf}[d] \\
  \Hom\sb{\clC/X\sb{0}}((X\!\xrw{tq}\!X),(j\sp{\ast} M\!\xrw{\zl}\!X\sb{0}) \ar[rr]^(0.59){\zvt} &&
    \Hom\sb{\Sz\Gpd\,\clC/X}(X,e\sp{\ast} M)
}
\nid commutes, with vertical isomorphisms, where \w{(tq)\sp{\ast}(\Id,f)=(tq,\hat f)} for
\w{(\Id,f)\in\Hom\sb{\clC/X\sb{0}}(X\sb{0},j\sb{1}\sp{\ast} M\sb{1})} and \w[.]{\hat f(a)=\zs(tqa,a)f(a)\zs(a,tqa)}
\end{lemma}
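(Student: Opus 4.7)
The lemma makes three assertions: $(tq)^*$ is an isomorphism, $\zf$ is an isomorphism, and the square \eqref{eq-maps-theta-xi-12} commutes. I would verify these in turn, relying throughout on two structural facts: $\zs:X\to M$ is a morphism of internal groupoids, so $\zs(x,y)\circ\zs(y,z)=\zs(x,z)$ and $\zs(a,a)=s_0(a)$; and $\mu:M\oset{\zr}{\times}M\to M$ is a morphism of internal groupoids, from which the interchange law yields, in particular, that left or right composition with a zero-section element $\zs(c,a)$ is a homomorphism for the abelian-group structure on each fiber of $M$.

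First, for $(tq)^*$: the identity $\zr_1\hat f(a)=\zr_1\bigl(\zs(tqa,a)\circ f(a)\circ\zs(a,tqa)\bigr)=(tqa,tqa)=\zD_{X_0}\,tq(a)$, combined with the pullback \eqref{eqjone}, shows that $\hat f$ factors through $j_1^*M_1$ with $\zl_1\hat f=tq$. An explicit two-sided inverse sends $(tq,g)$ to $(\Id,\check g)$, where $\check g(a)=\zs(a,tqa)\circ g(a)\circ\zs(tqa,a)$; the identities $\zs(a,tqa)\circ\zs(tqa,a)=\zs(a,a)=s_0(a)$ and $\zs(tqa,a)\circ\zs(a,tqa)=s_0(tqa)$ give the two inversion checks. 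The same conjugation strategy shows $\zf$ is an isomorphism: fiberwise at $(a,b)\in X_1$ it is the bijection $\alpha\mapsto\zs(tqa,a)\circ\alpha\circ\zs(b,tqb)$ from $M_1(a,b)$ to $M_1(tqa,tqb)$, with inverse $\beta\mapsto\zs(a,tqa)\circ\beta\circ\zs(tqb,b)$; compatibility of this fiberwise inverse with the split-epi structures in \eqref{eq-maps-theta-xi-10}--\eqref{eq-maps-theta-xi-11} (source, target, and unit) is routine.

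Finally, for commutativity of \eqref{eq-maps-theta-xi-12}, starting from $(\Id,f)$ both composites produce morphisms $X\to e^*M$ over $e$, whose arrow components live in the abelian group $M_1(tqa,tqb)=M_1(tqa,tqa)$ (using $qa=qb$, hence $tqa=tqb$). The down-then-right path produces, by \eqref{sbs-map-zvt-eq10prev}, the element $\hat f(a)-\hat f(b)$. The right-then-down path produces $\zs(tqa,a)\bigl(\zs(a,b)\circ f(b)-f(a)\circ\zs(a,b)\bigr)\zs(b,tqb)$. The decisive step is to distribute the outer $\zs$-conjugation over the fiberwise difference, which is legitimate by applying the interchange law to composable pairs $(\zs(c,a),\zs(c,a))\circ(\alpha,\beta)$ in $M\oset{\zr}{\times}M$ (and its mirror on the right). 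Once split, the functoriality identities $\zs(tqa,a)\circ\zs(a,b)=\zs(tqa,b)$ and $\zs(a,b)\circ\zs(b,tqb)=\zs(a,tqb)$, together with $tqa=tqb$, collapse each of the two surviving summands to $\hat f(a)$ or $\hat f(b)$, matching the two sides. The main obstacle is the bookkeeping: extracting the needed distributivity from the abstract interchange law for $\mu$, and keeping track of which fiber each intermediate term lives in so that the final subtraction is taken consistently inside the abelian group $M_1(tqa,tqa)$.
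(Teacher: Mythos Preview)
Your proposal is correct and follows essentially the same approach as the paper: explicit elementwise computation of both composites at \w[,]{(a,b)\in X\sb{1}} simplification via the groupoid-functoriality identities \w{\zs(x,y)\circ\zs(y,z)=\zs(x,z)} and \w[,]{tqa=tqb} and the same conjugation-by-$\zs$ inverse for \w[.]{(tq)\sp{\ast}} If anything, you are more careful than the paper in isolating the interchange law as the justification for distributing the outer $\zs$-conjugation across the fiberwise subtraction; the paper performs that step without comment, and simply asserts that $\zf$ is an isomorphism ``by construction'' where you supply the fiberwise inverse.
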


\begin{proof}
For each \w{(a,b)\in X\sb{1}} we have \w[,]{\zt'(\Id,f)(a,b)=\zs(a,b)f(b)-f(a)\zs(a,b)} so
\begin{equation*}
\begin{split}
 \zf\zvt'(\Id,f)(a,b)=&\zs(tqa,a)\{\zs(a,b)f(b)-f(a)\zs(a,b)\}\zs(tqb,b)= \\
  =\,  & \zs(tqa,a)\zs(a,b)f(b)\zs(b,tqb)-\zs(tqa,a)f(a)\zs(a,b)\zs(b,tqb)= \\
  =\,  & \hat{f}(b)-\hat{f}(a)=\zvt(\hat f)(a,b)=\zvt(tq)\sp{\ast}(\Id,f)(a,b)
\end{split}
\end{equation*}
Thus \wref{eq-maps-theta-xi-12} commutes. The map \w{(tq)^{\ast}} is an isomorphism, with inverse
sending \w{(tq,g):X\sb{0}\rw e\sp{\ast} j\sb{1}\sp{\ast} M\sb{1}} to \w[,]{(\Id,\widetilde{g} ): X\sb{0}\rw j\sb{1}\sp{\ast} M\sb{1}}
where \w[.]{\widetilde{g}(a)=\zs(a,tqa)g(a)\zs(tqa,a)}
The map $\zf$ is an isomorphism by construction.
\end{proof}

%
%
\sect{Comonad resolutions for $\SO$-cohomology}\label{sec-comonad-res}

We now define the comonad on track categories which is used to
construct functorial cofibrant replacements, yielding a formula
for computing the $\SO$-cohomology of a track category. This will
provide crucial ingredients (Theorem \ref{the-com-res-2} and
Corollary \ref{cor-com-res-1}) for our main result, Theorem
\ref{the-maps-theta-xi-1}.

\begin{mysubsection}{Track categories}\label{sbs-track-cat}
Track categories, the objects of \w{\Trt=\Gpd(\Cath)} of \S
\ref{snac}, are strict 2-categories $X$ with object set $\clO$
(and functors which are identity on objects) such that for each
\w{a,b \in\clO} the category \w{X(a,b)} is a groupoid. The double
nerve functor provides an embedding \w{\Nb{2}:\Trt \hookrightarrow
\funcat{2}{\Set}}

The lower right corner of \w{\Nb{2}X} (omitting degeneracies),
appears as follows, with the vertical direction groupoidal, and
the horizontal categorical:
\begin{equation*}
\entrymodifiers={+++[o]}
 \xymatrix@R=20pt@C=20pt{
   \quad \cdots \quad \ar@<2ex>[r] \ar[r] \ar@<1ex>[r] \ar@<-2ex>[d] \ar[d] \ar@<-1ex>[d]&
   \tens{X\sb{11}}{X\sb{10}} \ar@<1ex>[r] \ar@<0ex>[r]  \ar@<-2ex>[d] \ar[d] \ar@<-1ex>[d] \ar@<1ex>[l] \ar@<2ex>[l]   &
    \clO  \ar@<-2ex>[d] \ar[d] \ar@<-1ex>[d] \ar@<1ex>[l]\\
    \tens{X\sb{11}}{\clO}\ar@<2ex>[r] \ar[r] \ar@<1ex>[r] \ar@<-1ex>[d] \ar@<0.ex>[d] \ar@<-1ex>[u] \ar@<-2.ex>[u] &
    X\sb{11} \ar@<1ex>[r] \ar@<0ex>[r]  \ar@<-1ex>[d] \ar@<0.ex>[d] \ar@<1ex>[l] \ar@<2ex>[l] \ar@<-1ex>[u] \ar@<-2.ex>[u] &
    \clO  \ar@<0.ex>[d] \ar@<-1ex>[d] \ar@<1ex>[l] \ar@<-1ex>[u] \ar@<-2.ex>[u] \\
    \tens{X\sb{10}}{\clO} \ar@<2ex>[r] \ar[r] \ar@<1ex>[r] \ar@<-1ex>[u] &
     X\sb{10} \ar@<1ex>[r] \ar@<0ex>[r] \ar@<1ex>[l] \ar@<2ex>[l] \ar@<-1ex>[u] &
    \clO \ar@<1ex>[l] \ar@<-1ex>[u]\\
}
\end{equation*}
\nid There is a functor \w{\Pz:\Trt\to\Cath} given by dividing out
by the 2-cells: that is,\w{(\Pz X)\sb{0} = \mathcal{O}} and \w[,]{(\Pz
X)\sb{1} = q X\sb{1}} where $X\sb{1}$ is the groupoid of 1- and 2-cells in $X$
and \w{q:\Gpd \rw \Set} is the connected component functor.

\begin{definition}\label{def-track-cat-1}
We say \w{X\in\Trt} is \emph{homotopically discrete} if, for each
$a,b\in \clO$, the groupoid $X(a,b)$ is an equivalence relation,
that is, a groupoid with no non-trivial loops.
\end{definition}

\begin{remark}\label{rem-track-cat-0}
By taking nerves in the groupoid direction we define
  \begin{myeq}\label{defofI}
  I:\Trt=\Gpd(\Cath)\rw\funcat{}{\Cath}=\SO\mi\Cat
\end{myeq}
\nid If \w{F:X\rw Y} in \w{\Trt} is a $2$-equivalence (so for each
\w[,]{a,b\in\clO} \w{F(a,b)} is an equivalence of groupoids),
\w{IF} is a Dwyer-Kan equivalence of the corresponding
$\SO$-categories. In particular, if \w{X\in\Trt} is homotopically
discrete, and \w{d\,\Pz X} is a track category with only identity
$2$-cells), this holds for the obvious \w[.]{F:X\rw\dd\Pz X}
\end{remark}
\end{mysubsection}

\begin{mysubsection}{The comonad $\pmb{\ck}$}\label{sbs-monad-k}
Taking \w{\clC=\Cath} in \S\ref{sbs-adjpair} yields a pair of
adjoint functors
$$
L:\Cath \leftrightarrows \Gpd(\Cath)=\Trt : U~.
$$
\nid Let \w{\Gro} be the category of reflexive graphs with object
set $\clO$ and morphisms which are identity on objects (where a
reflexive graph is a diagram \w{\xymatrix{ X\sb{1} \ar@<2ex>^{d\sb{0}}[r]
\ar@<1ex>_{d\sb{1}}[r] & X\sb{0} \ar@<2ex>^{s\sb{0}}[l] }} with \w[).]{d\sb{0}
s\sb{0}=d\sb{1} s\sb{0}=\Id} There are adjoint functors \w[,]{F:\Gro
\rightleftarrows \Cath :V} where $V$ is the forgetful functor and
$F$ is the free category functor. By composition, we obtain a pair
of adjoint functors
\begin{myeq}\label{monad-k-eq3}
  LF:\Gro \rightleftarrows \Trt : VU\;,
\end{myeq}
and therefore a comonad \w[,]{(\ck, \zve, \zd)$\;, $\ck =
LFVU:\Trt \rw \Trt} where $\zve$ is the counit of the adjunction
\eqref{monad-k-eq3}, \w[,]{\zd=LF(\eta)VU} and $\eta$  the unit of the
adjunction \eqref{monad-k-eq3}. For each \w{X\in\Trt} we obtain a
simplicial object \w{\ck\sb{\bl}X\in\funcat{}{\Trt}} with
\w{\ck\sb{n} X= \ck\sp{n+1}X} and face and degeneracy maps given by
\begin{equation*}
\pt_i=\ck^i\zve\ck^{n-i}: \ck^{n+1}X\rw \ck^{n}X\hs\text{and}\hs
\zs_i=\ck^i\zd\ck^{n-i}: \ck^{n+1}X\rw \ck^{n+2}X~.
\end{equation*}
\nid The simplicial object \w{\ck\sb{\bl}X} is augmented over $X$
via \w[,]{\zve:\ck\sb{\bl}X\rw X} and \w{\ck\sb{\bl}X} is a simplicial
resolution of $X$ (see \cite{Weibel1994}).

\begin{remark}\label{rem-monad-k-1}
  The augmented simplicial object \w{VU(\ck\sb{\bl}X)\xrw{VU\zve}
  VUX}
  is aspherical (see for instance \cite[Proposition 8.6.10]{Weibel1994}).
\end{remark}

\begin{remark}\label{rem-mon}
Given \w{X\in\Cath} by \wref{adjpair.eq1A} we see that \w{LX} is a
homotopically discrete track category (Definition
\ref{def-track-cat-1}), with \w[.]{\Pz LX = X} There are two
canonical splittings \w[,]{\Pz LX =X\rw (LX\sb{0})=X_s\cop X_t} given
by the inclusion in the $s$ or in the $t$ copy of $X$. Since
\w{\ck Y=L(FVUY)} for each \w[,]{Y \in\Trt} the same holds for
\w[.]{\ck Y}

Furthermore, since \w{FVUY} is a free category, so is \w[.]{\Pz \ck
Y=FVUY} Since $F$ preserves coproducts (being a left adjoint),
\w{(\ck Y)\sb{0}=FVUY\cop FVUY = F(VUY \cop VUY)} and, using
\wref[,]{adjpair.eq1} \w[.]{(\ck Y)\sb{1}=F(VUY \cop VUY \cop VUY \cop VUY)}
Thus both \w{(\ck Y)\sb{0}} and \w{(\ck Y)\sb{1}} are free categories.
Similarly, \w{(\ck Y)_r} is a free category for each \w[.]{r >1}
\end{remark}
\end{mysubsection}

\begin{mysubsection}{The comonad resolution}\label{sbs-com-res}
Let \w{\iov:\funcat{}{\Trt}\rw \funcat{}{\SO\mi\Cat}} be the
functor obtained by applying the internal nerve functor $I$ of
\wref{defofI} levelwise in each simplicial dimension \wh so for
\w[,]{X\in\Trt} \w[.]{\iov\ck\sb{\bl} X\in \funcat{}{\SO\mi\Cat}}
Similarly, \w{\ion:\SO\mi\Cat \rw \funcat{2}{\Set}} is obtained by
applying the nerve functor in the category direction; applying
this levelwise to \w{ \iov\ck\sb{\bl} X} yields
\begin{myeq}\label{trisimpl}
  W=\ion\, \iov\ck\sb{\bl} X \in \funcat{3}{\Set}\;.
\end{myeq}
Below is a picture of the corner of $W$, in which the horizontal
simplicial direction is given by the comonad resolution, the
vertical is given by the nerve of the groupoid in each track
category, and the diagonal is given by the nerve of the category
in each track category.
\begin{myeq}\label{com-res-eq1}
\begin{gathered}
  \includegraphics[width=10cm]{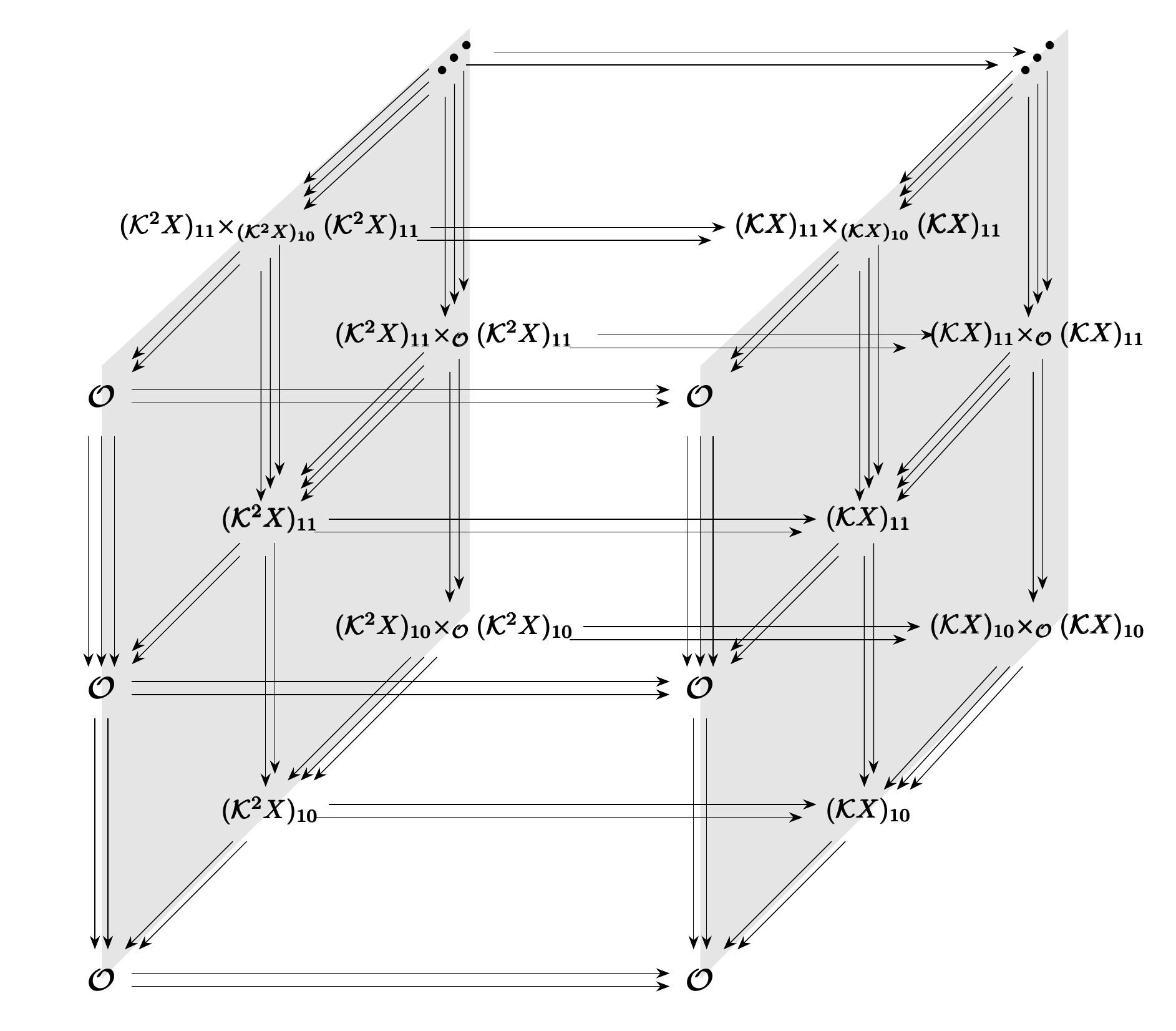}
\end{gathered}
\end{myeq}
\nid Note that the augmentation \w{\epsilon:\ck\sb{\bl} X\rw X} induces a map in \w[:]{\funcat{3}{\Set}}
\begin{myeq}\label{mapwtox}
   W \rw \ion\, \iov cX.
\end{myeq}
\nid Now let \w{Z=W^{(2)}} be $W$ thought of as a simplicial object in
\w{\funcat{2}{\Set}} along the direction appearing diagonal in the
picture, that is
\begin{myeq}\label{defofz}
  Z\in\funcat{}{\funcat{2}{\Set}}~,
\end{myeq}
\nid with \w{Z\sb{0}} the constant bisimplicial set at \w{\clO$, $Z\sb{1}}
given by
\begin{equation*}
\begin{split}
   & \xymatrix{&\hspace{47mm} &\vdots &} \\
   &
\mbox{\scriptsize
\xymatrix{
 \tens{(\ck^3 X)\sb{11}}{(\ck^3 X)\sb{10}} \ar@<1ex>[r] \ar@<0.0ex>[r]  \ar@<-1ex>[r] \ar@<1ex>[d] \ar@<0.0ex>[d]  \ar@<-1ex>[d]
& \tens{(\ck^2 X)\sb{11}}{(\ck^2 X)\sb{10}} \ar@<1ex>[d] \ar@<0.0ex>[d]  \ar@<-1ex>[d] \ar@<0.5ex>[r] \ar@<-0.5ex>[r]
& \tens{(\ck X)\sb{11}}{(\ck X)\sb{10}} \ar@<1ex>[d] \ar@<0.0ex>[d]  \ar@<-1ex>[d]\\
\!\!\!\!\!\!\!\cdots (\ck^3 X)\sb{11} \ar@<1ex>[r] \ar@<0.0ex>[r]  \ar@<-1ex>[r] \ar@<0.5ex>[d] \ar@<-0.5ex>[d]
&  (\ck^2 X)\sb{11} \ar@<0.5ex>[r] \ar@<-0.5ex>[r] \ar@<0.5ex>[d] \ar@<-0.5ex>[d]
& \ar@<0.5ex>[d] \ar@<-0.5ex>[d] (\ck X)\sb{11}\\
 (\ck^3 X)\sb{10} \ar@<1ex>[r] \ar@<0.0ex>[r]  \ar@<-1ex>[r]
&  (\ck^2 X)\sb{10} \ar@<0.5ex>[r] \ar@<-0.5ex>[r]
& (\ck X)\sb{10}
}
}
\end{split}
\end{equation*}
\nid with
\begin{myeq}\label{com-res-eq2}
  Z\sb{k}\cong \pro{Z\sb{1}}{Z\sb{0}}{k}\;,
\end{myeq}
\nid for each \w[.]{k\geq 2}

By applying the diagonal functor \w{\Diag: \funcat{2}{\Set}\rw
\funcat{}{\Set}} dimensionwise to $Z$ (viewed as in
\wref[),]{defofz} we obtain \w[.]{\diov Z\in
\funcat{}{\funcat{}{\Set}}}

To show that \w{\diov Z} is an \ww{\SO}-category, we must show that it behaves
like the nerve of a category object in simplicial sets in the outward
simplicial direction: this means that \w{(\diov Z)\sb{0}} is the ``simplicial set of
objects'' \wh and indeed it is the constant simplicial set at $\clO$.
Similarly, \w{(\diov Z)\sb{1}} is the ``simplicial set of arrows''.

Since \w{\Diag} preserves limits, by \wref{com-res-eq2} we have
\begin{equation*}
\begin{split}
(\diov Z)_k\cong& \Diag Z_k \cong \pro{\Diag Z\sb{1}}{\Sz\Diag Z\sb{0}}{k}\\
   = & \pro{(\diov Z)\sb{1}}{(\Sz\diov Z)\sb{0}}{k}\;.
\end{split}
\end{equation*}
\nid for each \w[.]{k\geq 2} Thus \w{\diov Z} is $2$-coskeletal, with unique fill-ins for
inner $2$-horns (the composite) so it is indeed in \w[,]{\SO\mi\Cat} and the map
\wref{mapwtox} induces a map \w{\za:\diov Z \rw I X} for $I$ as in \wref[.]{defofI}
\end{mysubsection}

\begin{lemma}\label{lem-com-res-1}
For $Z$ and \w{\Diag Z} as above, \w{\za:\diov Z \rw I X}
 is a Dwyer-Kan equivalence in \w[.]{\SO\mi\Cat}
\end{lemma}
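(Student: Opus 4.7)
Fix $a,b\in\clO$. Since $\diov Z$ and $IX$ share the object set $\clO$, it suffices to show that for each such pair the induced map on mapping spaces, $\alpha_{a,b}:(\diov Z)(a,b)\to N(X(a,b))$, is a weak equivalence of simplicial sets. Unfolding the definitions of $W$ and $Z$, one identifies $\diov Z$ with $\Diag(I\ck^{\bullet+1}X)$, the diagonal of the simplicial $\SO$-category obtained by applying $I$ levelwise to the comonad resolution, so that $\alpha_{a,b}$ becomes the diagonal of the bisimplicial map $A\to cN(X(a,b))$, where $A_{n,m}=((\ck^{n+1}X)_m)(a,b)$ is the set of $m$-composable $2$-cells from $a$ to $b$ in $\ck^{n+1}X$, the augmentation is induced by the counit $\epsilon^{n+1}:\ck^{n+1}X\to X$, and $cN(X(a,b))$ denotes the constant bisimplicial set in the comonad direction.

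The plan is to invoke the standard fact that if a map of bisimplicial sets is a weak equivalence columnwise, then its diagonal is a weak equivalence; it then suffices to show that for each $m\ge 0$, the augmented simplicial set $n\mapsto((\ck^{n+1}X)_m)(a,b)\to X_m(a,b)$ is aspherical. For $m=1$, this is precisely Remark~\ref{rem-monad-k-1} restricted to the edge $(a,b)$, since $UY=Y_1$ and so the edges of $VUY$ from $a$ to $b$ are $Y_1(a,b)$. For $m\ge 2$, I would use the pullback description $(\ck^{n+1}X)_m\cong(\ck^{n+1}X)_1\times_{(\ck^{n+1}X)_0}\cdots\times_{(\ck^{n+1}X)_0}(\ck^{n+1}X)_1$ in $\Cath$ (which restricts on $(a,b)$-morphisms to the corresponding pullback of sets), and propagate asphericity through these iterated pullbacks using the compatible extra degeneracies furnished by the unit $\eta:\mathrm{Id}_\Gro\to VULF$ of the adjunction $LF\dashv VU$ generating $\ck$.

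The main obstacle is the case $m=0$: one must show that $n\mapsto((\ck^{n+1}X)_0)(a,b)$ is aspherical over $X_0(a,b)$. Remark~\ref{rem-monad-k-1} does not apply directly here, since $VU$ extracts the $1$-level and not the $0$-level. I would exploit instead the explicit description $(\ck Y)_0=(FVUY)_s\cop(FVUY)_t$ from Remark~\ref{rem-mon}: the two canonical coproduct inclusions $FVUY\rightrightarrows(\ck Y)_0$ produce two candidate sections of the $0$-level augmentation, and their interplay with the $s\leftrightarrow t$ involution coming from the construction of $L$ in \S\ref{sbs-adjpair} provides a system of extra degeneracies at level $0$, yielding asphericity in the sense of \cite[Proposition~8.6.10]{Weibel1994}. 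Combining all the cases yields the columnwise weak equivalence, and hence the desired weak equivalence $\alpha_{a,b}$ on mapping spaces; running over all $(a,b)$ establishes the Dwyer-Kan equivalence.
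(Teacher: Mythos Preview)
Your overall architecture matches the paper's: reduce to mapping spaces at $(a,b)$, identify these with diagonals of a bisimplicial set, and prove columnwise weak equivalence in the groupoid direction. The case $m=1$ is handled correctly via Remark~\ref{rem-monad-k-1}. The remaining two cases, however, contain genuine gaps.

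For $m=0$ your proposed extra degeneracies do not exist as stated. The coproduct inclusions $FVUY\hookrightarrow(\ck Y)\sb{0}=FVUY\cop FVUY$ are sections of the fold map $(\ck Y)\sb{0}\to\Pz\ck Y=FVUY$, not of the augmentation $\zve\sb{0}:(\ck Y)\sb{0}\to Y\sb{0}$; there is no natural map $Y\sb{0}\to FVUY$ in the required direction. More fundamentally, the contracting homotopy provided by the unit $\eta$ lives only on $VU\ck\sb{\bl}X$, i.e.\ on the $2$-cell level $(\ck\sb{\bl}X)\sb{11}$; it does not descend to the $1$-cell level $(\ck\sb{\bl}X)\sb{10}$. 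The paper sidesteps this entirely: since $s\sb{0}:(\ck\sb{\bl}X)\sb{10}\to(\ck\sb{\bl}X)\sb{11}$ and $d\sb{0}$ are natural in the comonad direction, the augmented simplicial set at $m=0$ is a \emph{retract} of the one at $m=1$, hence is also a weak equivalence.

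For $m\geq 2$ the phrase ``propagate asphericity through iterated pullbacks using compatible extra degeneracies'' hides the real difficulty. The extra degeneracies from $\eta$ act on $(\ck\sb{\bl}X)\sb{11}$ but are not known to commute with the groupoid face maps $d\sb{0},d\sb{1}:(\ck\sb{\bl}X)\sb{11}\to(\ck\sb{\bl}X)\sb{10}$, so they need not pass to the pullback $(\ck\sb{\bl}X)\sb{11}\tiund{(\ck\sb{\bl}X)\sb{10}}(\ck\sb{\bl}X)\sb{11}$. Pullbacks of aspherical augmented simplicial sets are not aspherical in general. The paper's argument instead shows that the relevant face maps $\pt\sb{0}^{\bl},\pt\sb{1}^{\bl}$ are \emph{fibrations} of simplicial sets: using that $\ck Y$ is homotopically discrete, one exhibits $(\ck\sb{\bl}X)\sb{11}(a,b)$ as a pullback along the fold map $\nabla:\Pz(\ck\sb{\bl}X)\sb{10}(a,b)\cop\Pz(\ck\sb{\bl}X)\sb{10}(a,b)\to\Pz(\ck\sb{\bl}X)\sb{10}(a,b)$, which is a fibration since horns are connected. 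With fibrations in hand, the induced map on (homotopy) pullbacks of the columnwise weak equivalences at $m=0,1$ is again a weak equivalence.
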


\begin{proof}
We need to show that, for each \w{a,b\in\clO}
\begin{myeq}\label{com-res-eq2a}
  (\diov Z)(a,b)\rw X(a,b)
\end{myeq}
\nid is a weak homotopy equivalence.
Note that \w{(\diov Z)(a,b)} is the diagonal of
\begin{myeq}\label{com-res-eq3}
\begin{gathered}
\xymatrix@R=15pt@C=15pt{
&\ar@{}|(0.45){\vdots}[d] & \ar@{}|(0.45){\vdots}[d]  & \ar@{}|(0.45){\vdots}[d]  \\
\ar@{}|(0.3){\cdots}[r] & (\ck^3 X)\sb{11}(a,b) \ar@<1ex>[r] \ar@<0.0ex>[r]  \ar@<-1ex>[r] \ar@<-0.5ex>[d]   \ar@<0.5ex>[d]
& (\ck^2 X)\sb{11}(a,b) \ar@<-0.5ex>[r]   \ar@<0.5ex>[r] \ar@<-0.5ex>[d]   \ar@<0.5ex>[d]
& (\ck X)\sb{11}(a,b) \ar@<-0.5ex>[d]   \ar@<0.5ex>[d]\\
\ar@{}|(0.3){\cdots}[r] & (\ck^3 X)\sb{10}(a,b) \ar@<1ex>[r] \ar@<0.0ex>[r]  \ar@<-1ex>[r]
& (\ck^2 X)\sb{10}(a,b) \ar@<-0.5ex>[r]   \ar@<0.5ex>[r]
& (\ck X)\sb{10}(a,b),
}
\end{gathered}
\end{myeq}
and we have a map from \eqref{com-res-eq3} to the horizontally constant bisimplicial set
\begin{myeq}\label{com-res-eq4}
\begin{gathered}
\xymatrix@R=15pt@C=15pt{
& \ar@{}|(0.45){\vdots}[d] & \ar@{}|(0.45){\vdots}[d]  & \ar@{}|(0.45){\vdots}[d]  \\
\ar@{}|(0.3){\cdots}[r]& X\sb{11}(a,b) \ar@<1ex>[r] \ar@<0.0ex>[r]  \ar@<-1ex>[r] \ar@<-0.5ex>[d]   \ar@<0.5ex>[d]
& X\sb{11}(a,b) \ar@<-0.5ex>[r]   \ar@<0.5ex>[r] \ar@<-0.5ex>[d]   \ar@<0.5ex>[d]
& X\sb{11}(a,b) \ar@<-0.5ex>[d]   \ar@<0.5ex>[d]\\
\ar@{}|(0.3){\cdots}[r] & X\sb{10}(a,b) \ar@<1ex>[r] \ar@<0.0ex>[r]  \ar@<-1ex>[r]
& X\sb{10}(a,b) \ar@<-0.5ex>[r]   \ar@<0.5ex>[r]
& X\sb{10}(a,b) \\
}
\end{gathered}
\end{myeq}
inducing the map \eqref{com-res-eq2a} on diagonals.

We shall show that this map of bisimplicial sets from \wref{com-res-eq3} to \wref{com-res-eq4} is a
weak equivalence of simplicial sets in each vertical dimension. The corresponding map of diagonals
\wref{com-res-eq2a} is then a weak equivalence by \cite[Prop.1.7]{GJardS}.

Consider first vertical dimension $1$. We must show that the map of simplicial sets
\begin{myeq}\label{com-res-eq5}
  (\ck\sb{\bl} X)\sb{11}(a,b) \rw c X\sb{11}(a,b)
\end{myeq}
\nid is a weak equivalence, where \w{c X\sb{11}(a,b)} denotes the constant simplicial set at \w[.]{X\sb{11}(a,b)}
Note that \w{W\sb{11}=(V U W)\sb{1}} for each \w[,]{W\in\Trt} where $U$ is the internal arrow functor and
$V$ is the underlying graph functor (with \w{(VY)\sb{1}=Y\sb{1}} for each \w[),]{Y\in\Cath} so
\w{W\sb{11}(a,b)=(V U W)\sb{1}(a,b)} and thus \w{ (\ck\sb{\bl} X)\sb{11}(a,b)=(V U \ck\sb{\bl} X)\sb{1}(a,b)}
for each \w[.]{a,b\in\clO}

By Remark \ref{rem-monad-k-1}, the simplicial object \w{VU\ck\sb{\bl}X} is aspherical, so
\w{(V U \ck\sb{\bl}X)\sb{1}(a,b)} is, too, and is thus weakly equivalent to \w[.]{c (VUX)\sb{1}(a,b)=cX\sb{11}(a,b)}
Thus \wref{com-res-eq5} is a weak equivalence.

In vertical dimension $0$, from the vertical simplicial structure of \wref{com-res-eq3} we see
\begin{myeq}\label{com-res-eq6}
  (\ck\sb{\bl} X)\sb{10}(a,b) \rw c X\sb{10}(a,b)
\end{myeq}
\nid is a retract of \wref[,]{com-res-eq5}  so it is also a weak equivalence.

In vertical dimension $2$, we must show that
\begin{myeq}\label{com-res-eq7}
    \tens{(\ck\sb{\bl} X)\sb{11}(a,b)}{(\ck\sb{\bl} X)\sb{10}(a,b)} \rw \tens{c X\sb{11}(a,b)}{c X\sb{10}(a,b)}
\end{myeq}
is a weak equivalence. This is the induced map of pullbacks of the diagram
\begin{myeq}\label{com-res-eq8}
\begin{gathered}
\xymatrix{
(\ck\sb{\bl} X)\sb{11}(a,b) \ar^{\pt^\bl\sb{0}}[r] \ar[d] & (\ck\sb{\bl} X)\sb{10}(a,b)\ar[d] & (\ck\sb{\bl} X)\sb{11}(a,b) \ar_{\pt^\bl\sb{1}}[l] \ar[d]\\
c X\sb{11}(a,b) \ar^{\pt\sb{0}}[r] & c X\sb{10}(a,b) & c X\sb{11}(a,b) \ar_{\pt\sb{1}}[l]
}
\end{gathered}
\end{myeq}
\nid in $\clS$. By the above discussion, the vertical maps in \wref{com-res-eq8} are weak equivalences.

By definition of $\ck$ there is a pullback
\begin{myeq}\label{com-res-eq9}
\begin{gathered}
\xymatrix@R=16pt@C=25pt{
(\ck\sb{\bl}X)\sb{11}(a,b) \ar^{\pt^{\bl}\sb{1}}[r] \ar_{\pt^{\bl}\sb{0}}[d] & (\ck\sb{\bl}X)\sb{10}(a,b) \ar^{\nab}[d]\\
(\ck\sb{\bl}X)\sb{10}(a,b) \ar_{\nab}[r]  & \Pz (\ck\sb{\bl}X)\sb{10}(a,b)
}
\end{gathered}
\end{myeq}
in $\clS$, where
\begin{equation*}
  \nab:(\ck\sb{\bl}X)\sb{10}(a,b) = \Pz (\ck\sb{\bl}X)\sb{10}(a,b) \cop \Pz (\ck\sb{\bl}X)\sb{10}(a,b)\rw
  \Pz (\ck\sb{\bl}X)\sb{10}(a,b)\;.
\end{equation*}
\nid is the fold map.  To see that $\nab$ is a fibration, let \w[.]{Y\sb{\bl}=\Pz (\ck\sb{\bl}X)\sb{10}(a,b)}
For any commuting diagram
\begin{equation*}
\xymatrix@R=15pt@C=35pt{
\zL^k[n]\ar^{\za}[r] \ar@{^{(}->}_{j}[d] & Y\sb{\bl}\cop Y\sb{\bl} \ar^{\nab}[d]\\
\zD[n] \ar_{\zb}[r] & Y\sb{\bl}
}
\end{equation*}
\nid in $\clS$, $\za$ factors through
\w{i_t:Y\sb{\bl}\hookrightarrow Y\sb{\bl}\cop Y\sb{\bl}} \wb[,]{t=1,2}
since \w{\zL^k[n]} is connected, so
\begin{equation*}
\xymatrix@R=15pt@C=25pt{
\zL^k[n] \ar^{\za}[rr] \ar_{\za'}[rd] & &  Y\sb{\bl}\cop Y\sb{\bl} \\
 & Y\sb{\bl}\ar_{i_t}[ur] &
}
\end{equation*}
\nid commutes, and thus \w{\nab i_t \zb = \zb} and
\w[.]{i_t \zb j=i_t \nab \za = i_t \nab i_t \za' = i_t \za' = \za}

The maps \w{\pt\sb{0}^{\bl}} and  \w{\pt\sb{1}^{\bl}} are fibrations, since
they are pullbacks of such by \wref[.]{com-res-eq9}
The bottom horizontal maps in \wref{com-res-eq8} are fibrations since their target is discrete.
We conclude that the induced map of pullbacks \wref{com-res-eq7} is a weak equivalence.

Vertical dimension \w{i>2} is completely analogous.
\end{proof}

\begin{lemma}\label{lem-com-res-2}
  For any \w[,]{X\in\Trt} \w{\diov \ion\,\iov\ck\sb{\bl}X} is a free \ww{\SO}-category
  (see \S \ref{sbs-theta-cohom}).
\end{lemma}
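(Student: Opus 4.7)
My plan is to identify, at each inner simplicial level $r$, the category $C\sb{r}\in\Cath$ corresponding to $\diov\ion\,\iov\ck\sb{\bl}X$ viewed as a simplicial object in $\Cath$; to show that each $C\sb{r}$ is free, using Remark \ref{rem-mon}; and to verify that the inner degeneracies preserve generators.

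Unpacking the construction of $Z$ and $\diov Z$ from \S\ref{sbs-com-res}, I have $(\diov Z)\sb{k,r}=\Nb{2}(\ck\sp{r+1}X)\sb{k,r}$. Since $\Nb{2}Y\sb{k,q}$ is the $k$-th nerve (in $\Cath$) of $Y\sb{q}$, the $q$-th level of the internal-groupoid structure of $Y\in\Trt=\Gpd(\Cath)$, I read off $C\sb{r}=(\ck\sp{r+1}X)\sb{r}$.

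Writing $\ck\sp{r+1}X=\ck Y=L(FVUY)$ with $Y:=\ck\sp{r}X$ and $W:=FVUY$, the explicit form of $L$ from \wref{adjpair.eq1A}--\wref{adjpair.eq1} shows that each pullback defining $(LW)\sb{r}$ collapses, because the fold map $W\sb{a}\to W$ is the identity on each copy; this yields an isomorphism $(LW)\sb{r}\cong\coprod\sb{(c\sb{0},\ldots,c\sb{r})\in\{s,t\}\sp{r+1}}W$, a coproduct in $\Cath$ of $2\sp{r+1}$ copies of the free category $W$. Since $F$ preserves coproducts, $C\sb{r}$ is itself free, which is also the content of Remark \ref{rem-mon}.

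Finally, I must verify that each inner degeneracy $s\sb{i}\colon C\sb{r}\to C\sb{r+1}$ sends generators to generators. The diagonal construction decomposes $s\sb{i}$ into (i)~the internal groupoid-nerve degeneracy $(\ck\sp{r+2}X)\sb{r}\to(\ck\sp{r+2}X)\sb{r+1}$, which inserts an identity $1$-arrow and, under the coproduct decomposition above, maps each summand $W$ to another summand by the identity functor; and (ii)~the comonad degeneracy $\zd\sb{\ck\sp{r}X}=LF(\eta\sb{VU(\ck\sp{r}X)})$, which on the underlying free category level is $F(\eta\sb{VU(\ck\sp{r}X)})$ for the unit $\eta$ of $LF\dashv VU$; by the adjoint formulas of \S\ref{sbs-adjpair}, this unit includes each edge of $VU(\ck\sp{r}X)$ as the corresponding edge in the ``$st$''-summand of $VULFVU(\ck\sp{r}X)=VU(\ck\sp{r+1}X)$. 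Both contributions preserve generators, so $s\sb{i}$ does too. The most delicate step is this last bookkeeping, which amounts to a careful use of the adjoint correspondence in \S\ref{sbs-adjpair}.
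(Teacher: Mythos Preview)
Your argument is essentially the same as the paper's: both use Remark~\ref{rem-mon} to see that each $(\ck\sp{r+1}X)\sb{q}$ is a free category, and both reduce the generator-preservation for degeneracies to the fact that $\zd=LF(\eta)VU$ involves $F$ applied to a graph map. You are more explicit than the paper in identifying $C\sb{r}=(\ck\sp{r+1}X)\sb{r}$ and in separating the two contributions to the diagonal degeneracy, which is helpful.

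One small imprecision: the $i$-th degeneracy of the diagonal uses the $i$-th comonad degeneracy $\zs\sb{i}=\ck\sp{i}\,\zd\,\ck\sp{r-i}$, not just $\zd\sb{\ck\sp{r}X}=\zs\sb{0}$. For $i>0$ you must also observe that applying $\ck\sp{i}=(LFVU)\sp{i}$ preserves the ``generators to generators'' property; this follows because $L$ of a functor acts summand-wise on the coproduct description and $F$ of a graph map always sends generators to generators. With that adjustment your proof is complete and matches the paper's.
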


\begin{proof}
  By Remark \ref{rem-mon}, for each track category \w{\ck\sb{r} X} the nerve in the
  groupoid direction is a free category in each simplicial degree. Thus for
  \w[,]{Z:=  \ion\,\iov\ck\sb{\bl}X}
\w{\diov Z} is also a free category in each simplicial degree. By \S \ref{sbs-monad-k},
the degeneracies \w{\zs_i :\ck^{n+1}X \rw \ck^{n+2}X} are given by \w[,]{\zs_i =\ck\sp{i}LF(\eta)VU\ck\sp{n-i}} where
  \w{\eta: \Id \rw V U L F} is the unit of the adjunction. Therefore, \w{\zs_i} sends generators to generators
  and so the same holds for the degeneracies of \w[,]{\diov Z} so it is a free $\SO$-category.
\end{proof}

\begin{corollary}\label{pro-comres-cohom-1}
  For any \w[,]{X\in\Trt} \w{\diov \ion\,\iov\ck\sb{\bl}X} is a cofibrant replacement of \w{I X} in \w[.]{\SO\mi\Cat}
\end{corollary}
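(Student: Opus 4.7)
The proof should be a short assembly of the two preceding lemmas together with the cofibrancy criterion recalled in \S\ref{sbs-theta-cohom}. My plan is as follows.

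First, I would invoke Lemma \ref{lem-com-res-1}, which provides the comparison map $\za\colon \diov\ion\,\iov\ck\sb{\bl}X \rw IX$ and asserts that it is a Dwyer--Kan equivalence in $\SO\mi\Cat$. This handles the ``weak equivalence'' half of the definition of a cofibrant replacement.

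Next, I would invoke Lemma \ref{lem-com-res-2}, which says that $\diov\ion\,\iov\ck\sb{\bl}X$ is a free \ww{\SO}-category. Combined with the fact recalled in \S\ref{sbs-theta-cohom} \wh namely, that every free \ww{\SO}-category is cofibrant in the Dwyer--Kan model structure on $\SOC$ (cf.\ \cite[\S 2.4]{DwyKan1980-1}) \wh this shows that the source of $\za$ is cofibrant.

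Putting these two ingredients together, $\za$ is a weak equivalence from a cofibrant object to $IX$, which is by definition a cofibrant replacement of $IX$ in $\SO\mi\Cat$. There is essentially no obstacle here: the substantive work has already been carried out in Lemmas \ref{lem-com-res-1} and \ref{lem-com-res-2}, and the corollary simply packages these as the statement that the comonad resolution produces a functorial cofibrant replacement suitable for computing $\SO$-cohomology in the next section.
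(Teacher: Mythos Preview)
Your proposal is correct and takes essentially the same approach as the paper. The paper's own proof is even terser \wh it only cites Lemma \ref{lem-com-res-1} for the weak equivalence and leaves the cofibrancy implicit from the immediately preceding Lemma \ref{lem-com-res-2} \wh so your version, which makes both halves of ``cofibrant replacement'' explicit, is if anything a slight improvement in clarity.
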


\begin{proof}
By Lemma \ref{lem-com-res-1}, the map \w{\diov Z\rw I X} is a weak equivalence.
\end{proof}

We now show how to use the comonad resolution of a track category to compute
its $\SO$-cohomology:

\begin{proposition}\label{the-com-res-1}
For \w[,]{X\in\Trt} let \w{Z=\ion\,\iov\ck\sb{\bl}X} and let $M$ be a Dwyer-Kan
module over $X$. Then
\begin{myeq}\label{eq-com-res-1}
  \HSO{n-i}{I X}{M}~=~\pi\sb{i} \map\sb{\SO\mi\Sz{\Cat}/IX} (\diov Z,\ck\sb{X}(M,n))\;.
\end{myeq}
\end{proposition}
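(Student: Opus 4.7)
The plan is to combine the cofibrant replacement of Corollary~\ref{pro-comres-cohom-1} with the standard relation between $\SO$-cohomology and homotopy groups of mapping spaces into twisted Eilenberg--Mac\,Lane objects. First, by Corollary~\ref{pro-comres-cohom-1}, the canonical map $\diov Z \to IX$ is a cofibrant replacement of $IX$ in the Dwyer--Kan simplicial model structure on $\SOC$ recalled in \S\ref{sbs-theta-cohom}. Consequently, for any fibrant object $E$ over $IX$, the slice mapping space $\map\sb{\Sz\SOC/IX}(\diov Z, E)$ computes the derived mapping space out of $IX$, and in particular its homotopy groups are weak-equivalence invariants of $IX$.

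Next, by Definition~\ref{dsocoh}, for $\Lambda := \hpi IX$ and any $k \geq 1$ one has $\HSO{k}{IX}{M} = \pi\sb{0} \map\sb{\Sz\SOC/B\Lambda}(IX, E\sb{\Lambda}(M,k))$. Interpreting $\ck\sb{X}(M,n)$ as (or as weakly equivalent to) the twisted Eilenberg--Mac\,Lane $\SO$-category $E\sb{\Lambda}(M,n)$ over $\Lambda$, the cofibrant replacement step yields
$$
\HSO{n-i}{IX}{M} \cong \pi\sb{0}\,\map\sb{\Sz\SOC/IX}(\diov Z,\,\ck\sb{X}(M, n-i)).
$$

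I would then apply the loop-space relation $\ck\sb{X}(M, n-i) \simeq \Omega\sp{i} \ck\sb{X}(M, n)$ in the slice category over $IX$. This is the $\SO$-category analogue of $\Omega K(M,n) \simeq K(M,n-1)$; in the Dwyer--Kan framework it follows from the explicit simplicial construction of $E\sb{\Lambda}(M,n)$ out of simplicial abelian-group data fibered over $\Lambda$ (cf.\ \cite[\S 1]{DwyerKanSmith1986}). Passing to mapping spaces then converts the cohomological degree shift into a homotopy-group shift:
$$
\pi\sb{0} \map\sb{\Sz\SOC/IX}(\diov Z,\,\ck\sb{X}(M,n-i)) \cong \pi\sb{i} \map\sb{\Sz\SOC/IX}(\diov Z,\,\ck\sb{X}(M,n)),
$$
which is precisely \eqref{eq-com-res-1}.

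The step I expect to require the most care is the last one: one must verify that $\ck\sb{X}(M,n)$ is fibrant over $IX$ in the Dwyer--Kan slice model structure on $\SOC/IX$, and that the loop-object identification is compatible with this slice structure, so that $\pi\sb{0}$ of the $(n{-}i)$-level mapping space is genuinely $\pi\sb{i}$ of the $n$-level mapping space. Both properties are standard in the Dwyer--Kan--Smith framework and should be invoked explicitly; once they are in place, the identification \eqref{eq-com-res-1} is immediate from the three steps above.
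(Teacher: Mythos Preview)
Your proposal is correct and rests on the same key input (Corollary~\ref{pro-comres-cohom-1}), but the organization differs from the paper's. The paper takes the identification
\[
\HSO{n-i}{IX}{M}~=~\pi\sb{i}\,\map\sb{\SO\mi\Sz{\Cat}/IX}\bigl(\Diag\Fss X,\,\ck\sb{X}(M,n)\bigr)
\]
for the \emph{standard} Dwyer--Kan resolution $\Diag\Fss X$ as given (this is~\eqref{eq-com-res-2}, treated as ``by definition'' in the Dwyer--Kan--Smith framework), and then simply swaps $\Diag\Fss X$ for $\diov Z$ by producing a weak equivalence $\psi:\diov Z\to\Diag\Fss X$ over $IX$ via a lifting argument between the two cofibrant replacements. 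You instead start from the literal $\pi\sb{0}$-form of Definition~\ref{dsocoh}, insert $\diov Z$ directly as cofibrant replacement, and then carry out the degree shift explicitly through the loop relation $\ck\sb{X}(M,n-i)\simeq\Omega\sp{i}\ck\sb{X}(M,n)$ in the slice. Your route makes explicit precisely the shift that the paper absorbs into~\eqref{eq-com-res-2}; the paper's route avoids having to verify the loop identification and fibrancy in the slice by citing them as part of the standard setup, reducing the proof to a one-line comparison of resolutions.
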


\begin{proof}
Let \w{\zf:\Diag\Fss X\rw IX} be the Dwyer-Kan standard free resolution of
\S \ref{sbs-theta-cohom}. Then, by definition,
\begin{myeq}\label{eq-com-res-2}
\HSO{n-i}{I X}{M}~=~\pi\sb{i} \map\sb{\SO\mi\Sz{\Cat}/IX}(\Diag\Fss X,\,\ck\sb{X}(M,n))\;,
\end{myeq}
\nid where \w{\ck\sb{X}(M,n)} is the twisted Eilenberg-Mac~Lane \ww{\SO}-category
of \S \ref{dsocoh}.

By Corollary \ref{pro-comres-cohom-1}, \w{\alpha:\diov Z\rw IX} is a cofibrant
replacement for \w[,]{IX} so given a commuting diagram
\begin{equation*}
\xymatrix@R=15pt@C=35pt{
{*} \ar[r] \ar@{^{(}->}[d] & \Diag\Fss X \ar^{\zf}[d] \\
\diov Z \ar_{\za}[r] & IX,
}
\end{equation*}
\nid there is a lift \w{\psi:\diov Z\rw\Diag\Fss X} with
\w{\zf\psi=\za} and $\psi$ a weak equivalence. Hence
\begin{equation*}
  \pi\sb{i}\map\sb{\SO\mi\Sz{\Cat}/IX}(\diov Z,\ck\sb{X}(M,n))\cong
  \pi\sb{i}\map\sb{\SO\mi\Sz{\Cat}/IX}(\Diag\Fss X,\ck\sb{X}(M,n))\
\end{equation*}
Thus \wref{eq-com-res-1} follows from \wref[.]{eq-com-res-2}
\end{proof}

\begin{theorem}\label{the-com-res-2}
  Let \w[,]{X\in\Trt} and $M$ a module over $X$; then for each \w[,]{s\geq 0}
  \w{\HSO{s}{I X}{M}=\pi\sp{s}\Cu} for the cosimplicial abelian group
\begin{myeq}\label{eqcosimpabgp}
  \Cu~:=~\pi\sb{1} \map\sb{\SO\mi\Sz{\Cat}/X}(\iov\ck\sb{\bl}X,M)~.
  \end{myeq}
\end{theorem}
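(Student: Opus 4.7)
The plan is to realize $\HSO{s}{I X}{M}$ via the comonad resolution by combining Proposition \ref{the-com-res-1} with the $\Diag$--$\Tot$ duality of Lemma \ref{lem-sbs-bous-kan}, and then using the homotopical simplicity of each $\ck\sb{k}X$ to identify $\pi\sb{*}\Tot$ with cohomotopy of the cosimplicial abelian group $\Cu$. First, Proposition \ref{the-com-res-1} gives, for any $n$ and $i$ with $n - i = s$,
\[
  \HSO{s}{I X}{M} \;=\; \pi\sb{i}\map\sb{\SO\mi\Sz{\Cat}/I X}\bigl(\diov Z,\ \ck\sb{X}(M,n)\bigr),
\]
where $Z = \ion\,\iov\ck\sb{\bl}X$. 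Identifying $\diov Z$ with the diagonal of $\iov\ck\sb{\bl}X$ viewed as a simplicial object in the simplicial model category $\SO\mi\Cat = \funcat{}{\Cath}$, Lemma \ref{lem-sbs-bous-kan} rewrites this mapping space as $\Tot\,D\sp{\bl}$ for the cosimplicial simplicial set $D\sp{k} := \map\sb{\SO\mi\Sz{\Cat}/I X}(\iov\ck\sb{k}X,\ \ck\sb{X}(M,n))$.

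Next, analyze $D\sp{\bl}$ using the structure of $\ck$ from \S \ref{sbs-monad-k}. By Remark \ref{rem-mon}, each $\ck\sb{k}X = \ck\sp{k+1}X$ is homotopically discrete with $\Pz\ck\sb{k}X$ a free category; by Remark \ref{rem-track-cat-0}, $\iov\ck\sb{k}X$ is Dwyer--Kan equivalent to the constant simplicial category on this free category. For such an $\SO$-category, the twisted $\SO$-cohomology reduces to Baues--Wirsching cohomology of a free category, which vanishes in degrees $\geq 2$. Hence $\pi\sb{j}D\sp{k} = \HSO{n-j}{\iov\ck\sb{k}X/I X}{M}$ is concentrated in the two rows $j \in \{n-1,\, n\}$. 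Moreover, since $M$ viewed via $I$ as an $\SO$-category is a fiberwise $K(\cdot, 1)$-like object over $I X$ whose $(n-1)$-fold loop desuspension in the slice recovers $\ck\sb{X}(M, n)$, the top row admits a cosimplicial identification $\pi\sb{n}D\sp{\bl} \cong \pi\sb{1}\map\sb{\SO\mi\Sz{\Cat}/I X}(\iov\ck\sb{\bl}X, M) = \Cu$.

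Finally, apply the Bousfield--Kan spectral sequence $E\sb{2}\sp{p,q} = \pi\sp{p}\pi\sb{q}D\sp{\bl} \Rightarrow \pi\sb{q-p}\Tot D\sp{\bl}$. Choosing $i = n - s$, the contribution to $\pi\sb{i}\Tot D\sp{\bl}$ from the top row $q = n$ is exactly $\pi\sp{s}\pi\sb{n}D\sp{\bl} = \pi\sp{s}\Cu$, as claimed. The main obstacle is showing that the lower row $q = n-1$ contributes trivially, after accounting for $d\sb{2}$-differentials. This reduces to proving acyclicity of the cosimplicial abelian group $\pi\sb{n-1}D\sp{\bl}$ of derivations of the successive free-category stages; such acyclicity is the homological content of the asphericity of $VU\ck\sb{\bl}X \to VUX$ (Remark \ref{rem-monad-k-1}): by adjunction, derivations out of $\Pz\ck\sb{k}X = F(VU\ck\sp{k}X)$ into $M$ translate into graph morphisms from $VU\ck\sp{k}X$ into an associated graph, and the asphericity of $VU\ck\sb{\bl}X$ forces this cosimplicial $\Hom$ to have no higher cohomotopy.
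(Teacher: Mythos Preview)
Your overall strategy matches the paper's: use Proposition~\ref{the-com-res-1} and Lemma~\ref{lem-sbs-bous-kan} to identify $\HSO{s}{IX}{M}$ with $\pi\sb{n-s}\Tot$ of the cosimplicial space $D\sp{\bl}=\map\sb{\SO\mi\Sz{\Cat}/IX}(\iov\ck\sb{\bl}X,\ck\sb{X}(M,n))$, and then analyze the Bousfield--Kan spectral sequence using the fact that each $\ck\sb{k}X$ is homotopically discrete with free $\Pz$.

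However, you misapply the Baues--Wirsching vanishing. You say the $\SO$-cohomology of $\iov\ck\sb{k}X$ is concentrated in two degrees $0$ and $1$, giving two surviving rows $q\in\{n-1,n\}$. In fact only the single row $q=n$ survives. The relevant identification (from \cite[Theorem~3.10]{BauesBlanc2011}) is $\HSO{s}{\dd\clC}{M}\cong H\sb{\BW}\sp{s+1}(\clC,M)$ for $s>0$, and for free $\clC$ one has $H\sb{\BW}\sp{k}(\clC,M)=0$ for all $k\geq 2$ (\cite[Theorem~6.3]{BauesWirsching1985}). With the degree shift this gives $\HSO{s}{\dd\clC}{M}=0$ for all $s>0$, not just $s\geq 2$. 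Hence $E\sb{1}\sp{s,t}=\HSO{n-t}{I\ck\sp{s}X}{M}=0$ whenever $t\neq n$, and the spectral sequence collapses at $E\sb{1}$. Your entire final paragraph (the acyclicity argument for the putative $q=n-1$ row via asphericity of $VU\ck\sb{\bl}X$) is therefore unnecessary, and in any case was only sketched rather than proved.

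For the identification $\pi\sb{n}D\sp{\bl}\cong\Cu$, your ``$(n-1)$-fold loop desuspension'' description is vague. The paper's argument is cleaner: since $\pi\sb{n}D\sp{k}=\HSO{0}{I\dd\Pz\ck\sp{k}X}{M}$, this group is manifestly independent of $n$; taking $n=1$ gives $\pi\sb{1}\map\sb{\SO\mi\Sz{\Cat}/IX}(\iov\ck\sb{k}X,M)$, which is the $k$-th term of $\Cu$.
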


\begin{proof}
  Since \w[,]{\diov Z=\Diag \iov \ck\sb{\bl} X} by Proposition \ref{the-com-res-1} and
  Lemma \ref{lem-sbs-bous-kan} we have
\begin{myeq}\label{eq-com-res-3}
\begin{split}
 \HSO{n-i}{IX}{M}~=&~\pi\sb{i}\map\sb{\SO\mi\Sz{\Cat}/IX}(\diov Z,\ck\sb{X}(M,n)\\
    =&~ \pi\sb{i}\Tot\,\map\sb{\funcat{2}{\Sz{\Cath}}/IX}(\iov\ck\sb{\bl} X,\ck\sb{X}(M,n))~.
\end{split}
\end{myeq}
\nid the homotopy spectral sequence of the cosimplicial space
\begin{equation*}
  W^{\bl}=\map\sb{\SO\mi\Sz{\Cat}}(\iov\ck\sb{\bl}X,\ck\sb{X}(M,n))
\end{equation*}
\nid (see \cite[X6]{BousKan}) has
\w{E\sp{s,t}\sb{2}=\pi\sp{s}\pi\sb{t} W^{\bl}\Rightarrow \pi\sb{t-s}\Tot\,W^{\bl}}
with
\begin{equation*}
   E^{s,t}\sb{1}~=~\pi\sb{t} \map(I\ck\sp{s} X,\ck\sb{X}(M,n))=\HSO{n-t}{I \ck\sp{s} X}{M}\;.
\end{equation*}
\nid Here we used the fact that \w{I\ck\sp{s} X} is a cofibrant $\SO$-category,
since it is free in each dimension and the degeneracy maps take generators to generators.

By Remark \ref{rem-mon}, \w{I\ck\sp{s} X} is homotopically discrete, so
\w{I \ck\sp{s} X \rw I\dd\Pz \ck\sp{s} X} is a weak equivalence.
Hence \w{I \ck\sp{s} X} is a cofibrant replacement of \w[,]{I\dd\Pz \ck\sp{s} X} and so
\begin{myeq}\label{eq-com-res-4}
  \HSO{n-t}{I \ck\sp{s} X}{M}=\HSO{n-t}{I\dd\Pz \ck\sp{s} X}{M} \;.
\end{myeq}
\nid Recall from \cite[Theorem 3.10]{BauesBlanc2011} that
\w{\HSO{s}{\dd \clC}{M}=H^{s+1}\sb{\BW}(\clC,M)}
for any category $\clC$ and \w[,]{s>0} where \w{H^{\bl}\sb{\BW}} is the Baues-Wirsching
cohomology of \cite{BauesWirsching1985}. Hence if $\clC$ is free,
\w{\HSO{s}{\dd \clC}{M}=0} for each \w[,]{s>0} by \cite[Theorem 6.3]{BauesWirsching1985}.

If \w[,]{n\neq t} it follows from
by \wref{eq-com-res-4} that \w[,]{E\sp{s,t}\sb{1}=\HSO{n-t}{I\ck\sp{s} X}{M}=0}
since \w{\Pz I\ck\sp{s} X} is a free category, so the spectral sequence collapses
at the \ww{E\sb{1}}-term, and
\begin{myeq}\label{eq-com-res-51}
  \HSO{s}{IX}{M}~=~\pi\sb{n-s}\Tot\,W^{\bl}~=~
  \pi\sp{s}\pi\sb{n} \map(\iov\ck\sb{\bl}X,\ck\sb{X}(M,n))\;.
\end{myeq}
\nid Since
\w[,]{\pi\sb{n}\map(\iov\ck\sb{\bl}X,\ck\sb{X}(M,n))=H\sp{0}\sb{\SO}(I\dd\Pz \ck\sb{\bl}X,M)}
this is independent of $n$. We deduce from \wref{eq-com-res-51} that
\w{\HSO{s}{IX}{M}=\pi\sp{s} \Cu} for \w{\Cu} as in \wref[.]{eqcosimpabgp}
\end{proof}

\begin{corollary}\label{cor-com-res-1}
For any \w[,]{X\in\Trt} $X$-module $M$, and \w{s\geq 0} we have
\begin{myeq}\label{eqcsag}
 \HSO{s}{I \dX\sb{0}}{j\sp{\ast} M}~\cong~\pi\sp{s}\hCu~,
\end{myeq}
\nid where \w{\hCu} is the cosimplicial abelian group
\w[.]{\pi\sb{1} \map\sb{\SO\mi\Sz{\Cat}/X}(\iov(\ck_\bl X)\sb{0},j\sp{\ast} M)}
\end{corollary}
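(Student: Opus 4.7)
My strategy is to mimic the proof of Theorem \ref{the-com-res-2}, but with the comonad resolution \w{\iov\ck\sb{\bl}X} replaced throughout by the $0$-part \w{(\ck\sb{\bl}X)\sb{0}} of the same resolution of $X$, and with $M$ replaced by \w[.]{j\sp{\ast}M} The key preliminary observation is that the augmentation \w{(\ck\sb{\bl}X)\sb{0}\to cX\sb{0}} \wh obtained by restricting \w{\varepsilon\colon\ck\sb{\bl}X\to cX} to $0$-parts \wh is a cofibrant replacement of \w{I\dX\sb{0}\cong cX\sb{0}} in \w[.]{\SO\mi\Cat} That this map is a Dwyer-Kan equivalence is exactly the vertical dimension $0$ case in the proof of Lemma \ref{lem-com-res-1}, equation \eqref{com-res-eq6}. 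Freeness of \w{(\ck\sb{\bl}X)\sb{0}} as an \ww{\SO}-category (hence cofibrancy, per \S \ref{sbs-theta-cohom}) follows by the same argument as Lemma \ref{lem-com-res-2}: Remark \ref{rem-mon} makes each \w{(\ck\sp{n}X)\sb{0}} a free category in \w[,]{\Cath} and since the comultiplication \w{\zd=LF(\eta)VU} is built from the unit \w{\eta\colon\Id\to VULF} of the adjunction \w[,]{LF\dashv VU} which sends free generators to free generators, the degeneracies \w{\zs\sb{i}=\ck\sp{i}\zd\ck\sp{n-i}} do so after restriction to $0$-parts.

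Using this cofibrant replacement in the standard formula for \ww{\SO}-cohomology gives
\begin{equation*}
  \HSO{n-i}{I\dX\sb{0}}{j\sp{\ast}M}\;\cong\;
  \pi\sb{i}\,\map\sb{\SO\mi\Sz{\Cat}/IX}\bigl((\ck\sb{\bl}X)\sb{0},\,\ck\sb{\dX\sb{0}}(j\sp{\ast}M,n)\bigr),
\end{equation*}
and regarding \w{(\ck\sb{\bl}X)\sb{0}} as \w[,]{\Diag\iov(\ck\sb{\bl}X)\sb{0}} where \w{\iov(\ck\sb{\bl}X)\sb{0}} denotes the bisimplicial \ww{\SO}-category whose $s$-th level is the constant \ww{\SO}-category on \w[,]{(\ck\sp{s}X)\sb{0}} Lemma \ref{lem-sbs-bous-kan} rewrites the right-hand side as \w{\pi\sb{i}\Tot W\sp{\bl}} for the cosimplicial space \w[.]{W\sp{\bl}=\map\sb{\SO\mi\Sz{\Cat}/IX}(\iov(\ck\sb{\bl}X)\sb{0},\ck\sb{\dX\sb{0}}(j\sp{\ast}M,n))} The Bousfield-Kan spectral sequence of \w{W\sp{\bl}} has
\begin{equation*}
  E\sb{1}\sp{s,t}\;=\;\HSO{n-t}{I\dd(\ck\sp{s}X)\sb{0}}{j\sp{\ast}M}\;\cong\;
  H\sp{n-t+1}\sb{\BW}\bigl((\ck\sp{s}X)\sb{0},\,j\sp{\ast}M\bigr)
\end{equation*}
by \cite[Theorem 3.10]{BauesBlanc2011}, which vanishes for \w{t<n} by \cite[Theorem 6.3]{BauesWirsching1985} since \w{(\ck\sp{s}X)\sb{0}} is a free category. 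Collapse at \w{E\sb{1}} therefore yields \w{\HSO{s}{I\dX\sb{0}}{j\sp{\ast}M}\cong\pi\sp{s}\pi\sb{n}\map(\iov(\ck\sb{\bl}X)\sb{0},\ck\sb{\dX\sb{0}}(j\sp{\ast}M,n))} independently of $n$, and setting \w{n=1} produces \w[,]{\pi\sp{s}\hCu} as claimed.

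The main difficulty I anticipate is the cofibrancy verification in the first paragraph: while each level \w{(\ck\sp{n}X)\sb{0}} is manifestly a free category, checking that the $0$-part degeneracies preserve generators requires unwinding the action of \w{LF(\eta)VU} through the $L$-construction, which is a direct but slightly tedious adaptation of Lemma \ref{lem-com-res-2}. Once this is in place, the remainder is a transcription of the spectral sequence argument of Theorem \ref{the-com-res-2}.
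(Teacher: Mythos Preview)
Your proposal is correct and follows essentially the same approach as the paper: build a cofibrant replacement of \w{I\dX\sb{0}} out of the $0$-part of the comonad resolution, apply Lemma \ref{lem-sbs-bous-kan}, and collapse the Bousfield-Kan spectral sequence using freeness of \w[.]{(\ck\sp{s}X)\sb{0}} The paper simply invokes Corollary \ref{pro-comres-cohom-1} for the cofibrancy step (a mild abuse, since that result is stated for $X$ rather than \w[),]{X\sb{0}} whereas you spell out the analogous argument directly; one minor imprecision in your write-up is that the vanishing of \w{E\sb{1}\sp{s,t}} should be stated for \w{t\neq n} rather than just \w[,]{t<n} the case \w{t>n} being trivial since the target is an $n$-type.
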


\begin{proof}
  Let \w[.]{Z=N \iov\dd(\ck_\bl X)\sb{0}} Then \w{\diov Z \rw  IX\sb{0}}
  is a cofibrant replacement, by Corollary \ref{pro-comres-cohom-1}.
  Therefore, by Lemma \ref{lem-sbs-bous-kan}:
\begin{equation*}
\begin{split}
\HSO{n-i}{(I \dX\sb{0}}{j\sp{\ast} M}~=&~
\pi\sb{i}\map\sb{\SO\mi\Sz{\Cat}/I \dX\sb{0}}(\diov Z,\ck\sb{X\sb{0}}(j\sp{\ast} M,n))=\\
& =\pi\sb{i}\Tot\map\sb{\funcat{2}{\Sz{\Cath}}}(\iov d (\ck\sb{\bl}X)\sb{0}, j\sp{\ast} M)\;.
\end{split}
\end{equation*}
The homotopy spectral sequence for
\w{W\sp{\bl}=\map\sb{\SO\mi\Sz{\Cat}}(\iov\!\dd\,
  (\ck\sb{\bl}X)\sb{0},\ck\sb{X\sb{0}}(j\sp{\ast} M,n))}
again collapses at the \ww{E\sb{1}}-term, since
\w{(\ck\sp{s} X)\sb{0}} is a free category, yielding \wref[.]{eqcsag}
\end{proof}

%
%
\sect{$\SO$-cohomology of track categories and comonad cohomology}\label{sec-coh-track}

We now use the comonad of \S \ref{sbs-monad-k} to define the comonad cohomology
of a track category, rewrite the short exact sequence of Proposition \ref{pro-short-exact-1}
for \w[,]{\clC=\Cath} in terms of  mapping spaces, and use it to prove our main result,
Theorem \ref{the-maps-theta-xi-1}.

\begin{mysubsection}{Mapping spaces}\label{sbs-map-spaces}
  Given maps \w{f:A\rw B} and \w{g:M\rw B} in a simplicial model category $\clC$, we let
  \w{\map\sb{\clC/B}(A, M)} be the homotopy pullback
\begin{equation*}
\xymatrix@R=15pt@C=35pt{
\map\sb{\clC/B}(A, M) \ar[r] \ar[d] & \map\sb{\clC}(A, M) \ar^{g\sb{\ast}}[d]\\
\{f\} \ar[r] & \map\sb{\clC}(A, B)
}
\end{equation*}
\nid If $A$ is cofibrant and $g$ is a fibration, so is \w[.]{g\sb{\ast}} Moreover,
if \w{h:B\rw D} is a trivial fibration, so is  \w[.]{h\sb{\ast}:\map(A,B) \rw \map(A,D)}

Thus we obtain a map \w{\bar{h}\sb{\ast}:\map\sb{\clC/B}(A,M) \rw \map\sb{\clC/D}(A,M)}
defined by
\begin{equation*}
\xymatrix@C=6pt@R=1.3pc{
  \map\sb{\clC/B}(A,M) \ar[rrr] \ar[d] \ar^(0.7){\bar{h}\sb{\ast}}[rrdd] &&&
  \map\sb{\clC}(A,M) \ar[d] \ar^{\Id}[rrdd] \\
\{f\} \ar'[r][rrr] \ar[rrdd] &&&  \map\sb{\clC}(A,B) \ar^(0.4){h\sb{\ast}}[rrdd] \\
&& \map\sb{\clC/D}(A,M) \ar[rrr] \ar[d] &&& \map\sb{\clC}(A,M) \ar[d]\\
&& \{hf\} \ar[rrr] &&& \map\sb{\clC}(A,D)
}
\end{equation*}
\nid Since \w{h\sb{\ast}} is a weak equivalence, so is \w{\bar{h}\sb{\ast}}.

When \w[,]{\clC=\Trt} we will use this construction several times for \w{X=H Y}
as in \wref{splitobj} and \w[,]{e\sp{\ast}M} \w[,]{j\sp{\ast} M} and
\w{t\sp{\ast} j\sp{\ast} M} be as in\wref[,]{estarm}
\wref[,]{sbs-map-zvt-eq1} and \wref[,]{tjstarm} respectively:

\begin{enumerate}
\renewcommand{\labelenumi}{(\roman{enumi})~}
\item The diagram
\begin{equation*}
\xymatrix@C3pc@R=1.3pc{
  \{\Id_X\} \ar[r] \ar[d] & \map(X,X) \ar^{q_*}[d] & \map(X,M) \ar_{\zr\sb{\ast}}[l]
  \ar^{\Id}[d] \\
  \{q\} \ar[r] & \map(X,d \pi\sb{0}) & \map(X,M) \ar^{q_* \zr_*}[l]
}
\end{equation*}
\nid induces a map \w[,]{\bar{q}_*:\map\sb{\Sz\Trt/X}(X,M) \rw \map\sb{\Sz\Trt/\dpz}(X,  M)}
where $M$ is a track category over \w{\dpz} via the map
\w[,]{q:X\rw \dpz} which is a weak equivalence (since $X$ is homotopically
discrete) and a fibration (since \w{\dpz} is discrete). Hence
\w{\bar{q}\sb{\ast}} is a weak equivalence.
\item The diagram
\begin{equation*}
\xymatrix@C3pc@R=1.3pc{
  \{\zD\sb{X\sb{0}}\} \ar[r] \ar[d] & \map(\dX\sb{0},X) \ar^{t\sp{\ast}}[d] &
  \map(\dX\sb{0},j\sp{\ast} M) \ar_{}[l] \ar^{t\sp{\ast}}[d] \\
  \{\zD\sb{X\sb{0}}t\} \ar[r] & \map(\dpz, X) &
  \map(\dpz, t\sp{\ast} j\sp{\ast} M) \ar[l]
}
\end{equation*}
\nid induces
\w[.]{\bar{t}\sp{\ast}:\map\sb{\Sz\Trt/\dX\sb{0}}(\dX\sb{0}, j\sp{\ast} M) \rw
  \map\sb{\Sz\Trt/\dpz}(\dpz,\, t\sp{\ast} j\sp{\ast} M)}

Since \w[,]{q\sp{\ast} t\sp{\ast}=e\sp{\ast}} we have a diagram
\begin{equation*}
\xymatrix@C3pc@R=1.3pc{
  \{\zD\sb{X\sb{0}}t\} \ar[r] \ar[d] & \map(d \pi\sb{0},X) \ar^{q\sp{\ast}}[d] &
  \map(d \pi\sb{0}, t\sp{\ast} j\sp{\ast} M)
  \ar^{(\zr k e)\sb{\ast}}[l] \ar^{q\sp{\ast}}[d] \\
  \{\zD\sb{X\sb{0}}t\} \ar[r] & \map(\dX\sb{0}, X) &
  \map(\dpz, e\sp{\ast} j\sp{\ast} M) \ar^{}[l]
}
\end{equation*}
\nid inducing
\w[.]{\bar{q}\sp{\ast}:\map\sb{\Sz{\Trt/\dpz}}(\dpz,\, t\sp{\ast} j\sp{\ast} M) \rw
  \map\sb{\Sz{\Trt/\dpz}}(\dpz, e\sp{\ast} j\sp{\ast} M)}
\item The diagram
\begin{equation*}
\xymatrix@C3pc@R=1.3pc{
  \{\Id_X\} \ar[r] \ar[d] & \map(X,X) \ar^{j\sp{\ast}}[d] &
  \map(X,M) \ar_{\zr\sb{\ast}}[l] \ar^{j\sp{\ast}}[d] \\
\{j\} \ar[r] & \map(\dX\sb{0},X) & \map(\dX\sb{0},j\sp{\ast} M) \ar_{\zl\sb{\ast}}[l]
}
\end{equation*}
\nid induces
\w[.]{\bj\sp{\ast}:\map\sb{\Sz\Trt/X}(X,M) \rw
  \map\sb{\Sz\Trt/\dX\sb{0}}(\dX\sb{0}, j\sp{\ast} M)}
\item The diagram
\begin{equation*}
\xymatrix@C3pc@R=1.3pc{
  \{q\} \ar[r] \ar[d] & \map(X,d\pi\sb{0}) \ar^{t\sp{\ast}}[d] &
  \map(X,M) \ar_{q\sb{\ast}\zr\sb{\ast}}[l] \ar^{t\sp{\ast} j\sp{\ast}}[d] \\
  \{qjt\}=\{\Id\sb{d\pi\sb{0}}\} \ar[r] & \map(d \pi\sb{0},d \pi\sb{0}) &
  \map(\dpz,t\sp{\ast} j\sp{\ast} M) \ar^{q\sb{\ast} \zr\sb{\ast}}[l]
}
\end{equation*}
\nid induces
\w[.]{\bar{t}\sp{\ast}:\map\sb{\Sz\Trt/d\pi\sb{0}}(X, M) \rw
  \map\sb{\Sz\Trt/d \pi\sb{0}}(d \pi\sb{0}, t\sp{\ast} j\sp{\ast} M)}
Since \w{t\sp{\ast}} is a trivial fibration, \w{\bar{t}\sp{\ast}} is a weak equivalence.
\item The  diagram
\begin{equation*}
\xymatrix@C3pc@R=1.3pc{
  \{\Id_X\} \ar[r] \ar[d] & \map(X,X) \ar^{e\sp{\ast}}[d] &
  \map(X,M) \ar_{\zr\sb{\ast}}[l] \ar^{e\sp{\ast}}[d] \\
\{e\} \ar[r] & \map(X,X) & \map(X,e\sp{\ast} M) \ar^{\zr'\sb{\ast}}[l]
}
\end{equation*}
\nid induces
\w[.]{\bar{e}\sp{\ast}:\map\sb{\Sz\Trt/X}(X,M) \rw \map\sb{\Sz\Trt/X}(X, e\sp{\ast} M)}
Moreover,  \w{e:X\rw X} is a weak equivalence (since $X$ is homotopically discrete), so
\w{\bar{e}\sp{\ast}} is, too.
\item Using the fact that \w{q\sb{\ast}e\sp{\ast}=q\sb{\ast}} (since \w[),]{qe=qtq=q} we have
\begin{equation*}
\xymatrix@C3pc@R=1.3pc{
  \{e\} \ar[r] \ar[d] & \map(X,X) \ar^{q\sb{\ast}}[d] &
  \map(X,e\sp{\ast} M) \ar_{\zr'\sb{\ast}}[l] \ar^{r\sb{\ast}}[d] \\
\{q\} \ar[r] & \map(X,\dpz) & \map(X,M) \ar[l]
}
\end{equation*}
\nid which induces
\w[,]{\bar{q}'\sb{\ast}:\map\sb{\Sz\Trt/X}(X,e\sp{\ast} M) \rw
  \map\sb{\Sz\Trt/\dpz}(X, M)} and
\begin{myeq}\label{eq-map-spaces-9}
\begin{gathered}
\xymatrix@C=8pt@R=15pt{
  \map\sb{\Sz\Trt/X}(X,M) \ar^{e\sp{\ast}}[rr] \ar_{\bar{q}\sb{\ast}}[rd] &&
  \map\sb{\Sz\Trt/X}(X,e\sp{\ast} M) \ar^{\bar{q'}\sb{\ast}}[ld]\\
& \map\sb{\Sz\Trt/d\pi\sb{0}}(X, M)&
}
\end{gathered}
\end{myeq}
\nid commutes, since \w[.]{\bar{q'}\sb{\ast} e\sp{\ast}=q\sb{\ast}}
\item Finally, the diagram
\begin{equation*}
\xymatrix@C3pc@R=1.3pc{
  \{e\} \ar[r] \ar[d] & \map(X,X) \ar^{j\sp{\ast}}[d] &
  \map(X,e\sp{\ast} M) \ar_{\zr'\sb{\ast}}[l] \ar^{j\sp{\ast}}[d] \\
  \{jtq\} \ar[r] & \map(\dX\sb{0},X) &
  \map(\dX\sb{0}, e\sp{\ast} j\sp{\ast} M) \ar_{\zr\sp{\ast}}[l]
}
\end{equation*}
\nid induces
\w[.]{(\bj')\sp{\ast}:\map\sb{\Sz\Trt/X}(X,e\sp{\ast} M) \rw
  \map\sb{\Sz\Trt/\dX\sb{0}}(\dX\sb{0}, e\sp{\ast} j\sp{\ast} M)}
\end{enumerate}
\end{mysubsection}

\begin{lemma}\label{lem-maps-theta-xi-1}
For any map \w{f:A\rw B} in \w[,]{\Cath} with $A$ free, there is an isomorphism
\begin{equation*}
    \pi\sb{1} \map\sb{\Sz\Trt/B}(A,M)=\HSO{0}{A}{M} \cong \Hom\sb{\Sz\Cath/B}(A,M\sb{1})\;.
\end{equation*}
\end{lemma}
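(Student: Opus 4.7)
The plan is to compute $\pi_1\map_{\Trt/B}(A,M)$ directly by unwinding the simplicial enrichment of $\Trt$, and then match the result with $\Hom_{\Sz\Cath/B}(A,M_1)$ using the abelian group object structure on $M\to dB$. The identification with $\HSO{0}{A}{M}$ will follow from Definition \ref{dsocoh}, together with the fact that since $A$ is free, $IA$ is a cofibrant $\SO$-category (Section \ref{sbs-theta-cohom}), so the homotopy types of the relevant mapping spaces can be computed directly.

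First I would fix the canonical basepoint of $\map_{\Trt/B}(A,M)$, namely the composite $A\xrw{f}dB\xrw{\zs}M$ where $\zs$ is the zero section of the abelian group object $M\to dB$ (which exists since $M$ is an abelian group object in $\Trt/dB$, cf.\ Definition \ref{dxmod}). A 1-simplex of $\map_\Trt(A,M)$ based at $\zs f$ is a $\Gpd$-enriched natural self-transformation $\za:\zs f\Rw \zs f$, assigning to each $a\in A$ a $2$-cell $\za_a\in M(\zs f(a),\zs f(a))$ and satisfying the naturality equality $\zs f(h)\cdot\za_a=\za_{a'}\cdot\zs f(h)$ for every morphism $h:a\rw a'$ in $A$. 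Imposing that this 1-simplex project to the constant loop at $f$ in $\map_\Trt(A,dB)$ is automatic, since $dB$ has only identity 2-cells; equivalently, each $\za_a$ lies over the identity 2-cell of $f(a)$ in $B$, i.e.\ in the fibre of $\zr_1:M_1\rw B$ over $f(a)$.

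Next I would translate this data into a morphism $A\rw M_1$ in $\Cath/B$. The abelian group object structure on $M\to dB$ equips each fibre of $\zr_1:M_1\to B$ with an abelian group structure (compare Remark \ref{rhy} in the case $\clC=\Cath$, $X=dB$), so a choice of $\za_a\in(M_1)_{f(a)}$ for each object $a$ together with the naturality condition for morphisms of $A$ is precisely the data of a functor $\widetilde\za:A\rw M_1$ satisfying $\zr_1\circ\widetilde\za=f$, i.e.\ an element of $\Hom_{\Sz\Cath/B}(A,M_1)$. Conversely, every such functor produces a natural self-transformation by the same formula. Because $A$ is free, the naturality requirement reduces to a condition on generators and is equivalent to functoriality into $M_1$; no further compatibilities arise.

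The main obstacle is arguing that no higher homotopy data contributes to $\pi_1$: one must verify that 2-simplices of $\map_\Trt(A,M)$ (which would identify distinct natural self-transformations) are just equalities, reflecting the fact that $\Trt$ is (homotopically) a 2-categorical structure, so $\map_\Trt(A,M)$ has no $\pi_i$ for $i\geq 2$. Once this point is checked, the bijection with $\Hom_{\Sz\Cath/B}(A,M_1)$ is immediate, and the equality with $\HSO{0}{A}{M}$ follows by comparing the mapping space $\map_{\Sz\SOC/IB}(IA,\ck_{IB}(M,1))$ of Definition \ref{dsocoh} with our computation via the equivalence $\map_{\Trt/B}(A,M)\simeq\map_{\Sz\SOC/IB}(IA,IM)$ coming from cofibrancy of $IA$.
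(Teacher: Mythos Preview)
Your overall strategy matches the paper's: compute the mapping space explicitly, identify $\pi_1$, and invoke cofibrancy of $c(A)$ for the $\HSO{0}{A}{M}$ identification. However, your description of the $1$-simplices is confused in a way that matters.

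You write that a $1$-simplex based at $\zs f$ is a natural self-transformation assigning to each object $a\in\clO$ a $2$-cell $\za_a\in M(\zs f(a),\zs f(a))$, with a naturality equation indexed by morphisms $h:a\to a'$. But in $\Trt=\Gpd(\Cath)$ all functors are the identity on objects, so $\zs f(a)=a$ and your $\za_a$ is just some $2$-cell in $M(a,a)$ with unspecified source and target; this data, indexed by $\clO$, does not assemble into a functor $A\to M_1$ in $\Cath$ (which must assign a morphism of $M_1$ to each \emph{morphism} of $A$). Your subsequent sentence ``a choice of $\za_a\in(M_1)_{f(a)}$ \ldots\ is precisely the data of a functor $\widetilde\za:A\to M_1$'' does not follow: the fibre of $\zr_1:M_1\to B$ over an object makes no sense here, since $\zr_1$ is the identity on objects. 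The role you assign to freeness (``naturality reduces to a condition on generators'') is also misplaced; freeness is used only to make $c(A)$ cofibrant for the $\HSO{0}{A}{M}$ identification, not for the computation of $\pi_1$ itself.

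The paper avoids all this by using the simplicial tensor directly: the $n$-simplices of $\map_{\Sz\Trt/B}(A,M)$ are maps $A\otimes\Delta[n]\to M_n$ over $B$, with $A\otimes\Delta[n]$ a coproduct in $\Cath$ of copies of $A$ indexed by the $n$-simplices of $\Delta[n]$. Since $M_0=B$, the $0$-level is $\{f\}$; the non-degenerate part of the $1$-level is $\Hom_{\Sz\Cath}(A,M_1)$, and the $1$-cycles are exactly $\Hom_{\Sz\Cath/B}(A,M_1)$. Finally $M$ is a $1$-Postnikov section, so the mapping space is too, and $\pi_1$ equals the group of $1$-cycles. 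If you want a $2$-categorical picture, the correct one is that loops at $\zs f$ are \emph{icons} $\zs f\Rightarrow\zs f$, i.e.\ assignments of a $2$-cell $\za_h:\zs f(h)\Rightarrow\zs f(h)$ to each \emph{morphism} $h$ of $A$, compatible with composition; this is precisely a functor $A\to M_1$ over $B$.
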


\begin{proof}
  Since $A$ is free, \w{\co{A}\in s\OC=\SOC}  is cofibrant, and is its own fundamental
  track category.
A module $M$, as an abelian group object in \w[,]{\Trt/B} is an Eilenberg-Mac~Lane object
\w[,]{\EM{B}{M\sb{1}}{1}} which explains the first equality. By \cite[II, \S 2]{QuiH},
the $n$-simplices of the cosimplicial abelian group
 \w{\map\sb{\Sz\Trt/B}(A,M)} are maps  of categories over $B$ of the form
 \w{\sigma:A\otimes\Delta[n]\rw M\sb{n}} where  \w{A\otimes\Delta[n]} is the coproduct
 in \w{\Cath} of one copy of $A$ for each $n$-simplex of \w[.]{\Delta[n]}

 Since \w[,]{M\sb{0}=B} \w{\map\sb{\Sz\Trt/B}(A,M)\sb{0}} is the singleton \w[.]{\{f\}}
 The non-degenerate part of \w{\map\sb{\Sz\Trt/B}(A,M)\sb{1}} is
 \w[,]{\Hom\sb{\Sz\Cath}(A,M\sb{1})} so the $1$-cycles are
 \w[.]{ \Hom\sb{\Sz\Cath/B}(A,M\sb{1})}
Since $M$, and thus \w[,]{\map\sb{\Sz\Trt/B}(A,M)}
 is a $1$-Postnikov section, the $1$-cycles are equal to
 \w[.]{\pi\sb{1} \map\sb{\Sz\Trt/B}(A,M)}
\end{proof}

\begin{lemma}\label{lem-maps-theta-xi-2}
For \w{X\in\Trt}  of the form \w{HY}  and $M$ an $X$-module, the diagram
\myrdiag[\label{eqlemtx}]{
  \pi\sb{1}\map\sb{\Sz\Trt/X}(X,M) \ar_{\|\wr}[d] \ar[rr]^(0.4){j\sp{\ast}} &&
  \Hom\sb{\Sz\Cath/X\sb{0}}((X\sb{0}\xrw{\Id}X\sb{0}),\
  (j\sb{1}\sp{\ast} M\sb{1}\to X\sb{0}) \ar_{\|\wr}[d]\\
  \Hom\sb{\Sz\Cath/\pi\sb{0}}(\pi\sb{0},\,t\sp{\ast} j\sb{1}\sp{\ast} M\sb{1})
  \ar[rr]_(0.39){q\sp{\ast}} &&
  \Hom\sb{\Sz\Cath/X\sb{0}}((X\sb{0}\xrw{e\sb{0}}X\sb{0}),\
  (e\sp{\ast} j\sb{1}\sp{\ast} M\sb{1}\to X\sb{0}))
}
\nid commutes, and there is an isomorphism
\begin{equation*}
  \omega: \Hom\sb{\Sz\Cath/X\sb{0}}((X\sb{0}\xrw{e\sb{0}}X\sb{0}),\
  (e\sp{\ast} j\sb{1}\sp{\ast} M\sb{1}\xrw{\zl\sb{1}}X\sb{0}))~\to~
  \Hom\sb{\Sz\Cath/X\sb{0}}\,(X\sb{0}\xrw{e\sb{0}}X\sb{0}),\ j\sb{1}\sp{\ast}
  M\sb{1}\xrw{\zl\sb{1}}X\sb{0}))
\end{equation*}
\nid such that \w[,]{\omega q\sp{\ast}=\xi} for $\xi$ as in \wref[.]{def-of-xi}
\end{lemma}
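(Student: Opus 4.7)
The plan is to establish the two vertical isomorphisms in \eqref{eqlemtx} separately, verify commutativity of the square by unwinding definitions, and then identify $\omega$ with the required property $\omega q\sp{\ast}=\xi$.

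The right vertical isomorphism factors as two natural bijections. First, since $e\sp{\ast}j\sb{1}\sp{\ast}M\sb{1}$ is defined as the pullback of $\zl\sb{1}\colon j\sb{1}\sp{\ast}M\sb{1}\to X\sb{0}$ along $e\sb{0}\colon X\sb{0}\to X\sb{0}$, postcomposition with the projection $r$ gives
$$
\Hom\sb{\Cath/X\sb{0}}\!\bigl((X\sb{0}\xrw{e\sb{0}}X\sb{0}),(e\sp{\ast}j\sb{1}\sp{\ast}M\sb{1}\to X\sb{0})\bigr)\;\cong\;\Hom\sb{\Cath/X\sb{0}}\!\bigl((X\sb{0}\xrw{e\sb{0}}X\sb{0}),(j\sb{1}\sp{\ast}M\sb{1}\to X\sb{0})\bigr).
$$
Second, since $e\sb{0}=tq$, the map $(tq)\sp{\ast}$ of Lemma \ref{lem-maps-theta-xi-3} is an isomorphism sending a section $(\Id,f)$ of $\zl\sb{1}$ to $(e\sb{0},\hat f)$ with $\hat f(a)=\zs(tqa,a)\,f(a)\,\zs(a,tqa)$. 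Composing these two bijections yields the right vertical isomorphism in \eqref{eqlemtx}.

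For the left vertical isomorphism, observe that $M$ is an abelian group object in $\Trt/X$ with $M\sb{0}=X\sb{0}$, hence a twisted Eilenberg-Mac~Lane track category. Since $X=HY$, the adjunction $H\dashv R$ of \S\ref{sbs-bourne} identifies $\map\sb{\Sz\Trt/X}(X,M)$ with $\map\sb{\Sz\Spl\Cath/Y}(Y,RM)$, and adapting the argument of Lemma \ref{lem-maps-theta-xi-1} (using the EM structure of $M$ and the fact that $Y$ lives in $\Spl\Cath$) shows that the $\pi\sb{1}$ of this space at the zero section is the abelian group $\Hom\sb{\Spl\Cath/Y}(Y,RM)$. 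Its elements are described concretely in Remark \ref{rhy} as composites through $t\sb{1}$ with 2-cell component $\phi\sb{1}\colon X\sb{1}\to M\sb{1}$, and this data, read against the pullback \eqref{sbs-map-zvt-eq12} defining $t\sp{\ast}j\sb{1}\sp{\ast}M\sb{1}$, amounts precisely to a map $\pi\sb{0}\to t\sp{\ast}j\sb{1}\sp{\ast}M\sb{1}$ in $\Cath/\pi\sb{0}$.

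With both vertical isomorphisms in place, commutativity of \eqref{eqlemtx} reduces to matching two formulas: the clockwise image of an element $\phi$ acquires the conjugation $\zs(tqa,a)(-)\zs(a,tqa)$ from the $(tq)\sp{\ast}$ piece, while the counter-clockwise path, via $q\sp{\ast}$ and the composite through $t\sb{1}$ of Remark \ref{rhy}, produces the same conjugated element on each 1-cell $a$. Finally, take $\omega$ to be the pullback bijection above, namely postcomposition with $r\colon e\sp{\ast}j\sb{1}\sp{\ast}M\sb{1}\to j\sb{1}\sp{\ast}M\sb{1}$. Then for $f\colon\pi\sb{0}\to t\sp{\ast}j\sb{1}\sp{\ast}M\sb{1}$, one checks that $\omega q\sp{\ast}(f)$ equals $l\sb{1}fq\colon X\sb{0}\to j\sb{1}\sp{\ast}M\sb{1}$ regarded as a map over $e\sb{0}$, which is exactly $\xi(f)$ of \eqref{sbs-map-zvt-eq13}. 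The main obstacle is the left vertical isomorphism: since $X=HY$ need not be cofibrant, Lemma \ref{lem-maps-theta-xi-1} does not apply to $X$ directly, and one must go through the $H\dashv R$ adjunction combined with the EM structure on $M$ to reduce $\pi\sb{1}\map\sb{\Sz\Trt/X}(X,M)$ to a hom-set of split-epi maps that can be explicitly computed via Remark \ref{rhy}.
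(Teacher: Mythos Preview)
Your treatment of the right vertical isomorphism and of $\omega$ is essentially what the paper does: both use the pullback description of $e\sp{\ast}j\sb{1}\sp{\ast}M\sb{1}$ together with the conjugation $a\mapsto\zs(tqa,a)\,(-)\,\zs(a,tqa)$, and the verification $\omega q\sp{\ast}=\xi$ proceeds in the same way.

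The gap is in your left vertical isomorphism. You propose to identify $\map\sb{\Sz\Trt/X}(X,M)$ with a mapping space $\map\sb{\Sz\Spl\Cath/Y}(Y,RM)$ via the Bourne adjunction $H\dashv R$, and then adapt Lemma~\ref{lem-maps-theta-xi-1} in that setting. But the paper never equips $\Spl\Cath$ with a simplicial model structure, and $H\dashv R$ is only established as an ordinary (1-categorical) adjunction in \S\ref{sbs-bourne}; there is no reason it should induce an equivalence of simplicial mapping spaces. Moreover, Lemma~\ref{lem-maps-theta-xi-1} genuinely uses the simplicial tensoring on $\Cath$ and the freeness of $A$; ``adapting'' it to $\Spl\Cath$ would require setting up that whole apparatus from scratch. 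So as written this step is not justified.

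The paper avoids the problem entirely. Since $X=HY$ is homotopically discrete, the maps $q:X\to\dpz$ and $e:X\to X$ are weak equivalences, and \S\ref{sbs-map-spaces} assembles from them a chain of weak equivalences of mapping spaces
\[
\map\sb{\Sz\Trt/X}(X,M)\;\xrightarrow{\bar{e}\sp{\ast}}\;\map\sb{\Sz\Trt/X}(X,e\sp{\ast}M)\;\xrightarrow{\bar{q}'\sb{\ast}}\;\map\sb{\Sz\Trt/\dpz}(X,M)\;\xrightarrow{\bar{t}\sp{\ast}}\;\map\sb{\Sz\Trt/\dpz}(\dpz,t\sp{\ast}j\sp{\ast}M).
\]
Applying $\pi\sb{1}$ gives the left vertical isomorphism, and Lemma~\ref{lem-maps-theta-xi-1} is then applied only to the discrete categories $\dpz$ and $\dX\sb{0}$ (where it is legitimate), not to $X$ itself. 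Commutativity of \eqref{eqlemtx} then falls out of the commutativity of the diagram of mapping spaces built in \S\ref{sbs-map-spaces}, rather than from a bare formula-match. You should replace your Bourne-adjunction argument by this homotopy-invariance argument.
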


\begin{proof}
We have a commuting diagram
\begin{myeq}\label{eq-maps-theta-xi-1}
\begin{gathered}
\xymatrix@C=15pt@R=16pt{
  \map\sb{\Sz\Trt/X}(X,M) \ar^{\bj\sp{\ast}}[rr] \ar^{\bar{e}\sp{\ast}}[d]
  \ar@<-10ex>@/_1pc/_{\bar{q}\sp{\ast}}[dd] &&
  \map\sb{\Sz\Trt/\dX\sb{0}}(\dX\sb{0}, j\sp{\ast} M) \ar^{\tilde{e}\sp{\ast}}[d] \\
  \map\sb{\Sz\Trt/X}(X,e\sp{\ast} M) \ar^{(\bj')\sp{\ast}}[rr] \ar^{\bar{q'}\sb{\ast}}[d] &&
  \map\sb{\Sz\Trt/\dX\sb{0}}(\dX\sb{0}, e\sp{\ast} j\sp{\ast} M)\\
\map\sb{\Sz\Trt/d \pi\sb{0}}(X, M) \ar_{\bar{t}\sp{\ast}}[d] \\
\map\sb{\Sz\Trt/d \pi\sb{0}}(d \pi\sb{0} , t\sp{\ast} j\sp{\ast} M) \ar_{q\sp{\ast}}[uurr]
}
\end{gathered}
\end{myeq}
\nid in which the vertical maps are weak equivalences by \S \ref{sbs-map-spaces}.
Applying \w{\pi\sb{1}} yields
\begin{myeq}\label{eq-maps-theta-xi-2}
\begin{gathered}
\xymatrix@C=2pt@R15pt{
  \pi\sb{1}\map\sb{\Sz\Trt/X}(X,M) \ar_{\|\wr}[d] \ar[rr]^(0.5){j\sp{\ast}} &&
  \pi\sb{1}\map\sb{\Sz\Trt/\dX\sb{0}}\ar@<-8ex>_{\|\wr}^{\chi}[d]+<0pt,10pt>
  (\dX\sb{0},j\sp{\ast} M)  \\
  \pi\sb{1}\map\sb{\Sz\Trt/d \pi\sb{0}}(d \pi\sb{0},t\sp{\ast} j\sp{\ast} M)
  \ar[rr]_(0.43){q\sb{\ast}} && \pi\sb{1}\map\sb{\Sz\Trt/\dX\sb{0}}\,
((\dX\sb{0}\xrw{\dd e\sb{0}}\dX\sb{0}),\  e\sp{\ast} j\sp{\ast} M)
}
\end{gathered}
\end{myeq}
\nid with vertical isomorphisms. By Lemma \ref{lem-maps-theta-xi-1},
\begin{equation*}
\begin{split}
  \pi\sb{1}\map\sb{\Sz\Trt/\dX\sb{0}}(\dX\sb{0}, j\sp{\ast} M)\cong&
  \Hom\sb{\Sz\Cath/X\sb{0}} ((\dX\sb{0}\xrw{\Id}\dX\sb{0}),\ (j\sb{1}\sp{\ast}
  M\sb{1}\xrw{\zl\sb{1}}\dX\sb{0})) \\
  \pi\sb{1}\map\sb{\Sz\Trt/d \pi\sb{0}}(d \pi\sb{0}, t\sp{\ast} j\sp{\ast} M)\cong &
  \Hom\sb{\Sz\Cath/\pi\sb{0}}(\pi\sb{0},t\sp{\ast} j\sb{1}\sp{\ast} M\sb{1})\;, \\
  \pi\sb{1}\map\sb{\Sz\Trt/\dX\sb{0}}(\dX\sb{0}, e\sp{\ast} j\sp{\ast} M)\cong &
  \Hom\sb{\Sz\Cath/X\sb{0}}(X\sb{0}, q\sp{\ast}t\sp{\ast} j\sb{1}\sp{\ast} M\sb{1})\;.
\end{split}
\end{equation*}
Hence from \wref{eq-maps-theta-xi-2} we obtain
\begin{equation*}
\xymatrix@C=15pt@R=15pt{
  \pi\sb{1}\map\sb{\Sz\Trt/X}(X,M) \ar_{\|\wr}[d] \ar[rr]^(0.4){j\sp{\ast}} &&
  \Hom\sb{\Sz\Cath/X\sb{0}}((X\sb{0}\xrw{\Id}X\sb{0}),\  (j\sb{1}\sp{\ast} M\sb{1}\to
  X\sb{0})) \ar_{\chi}[d] \\
  \Hom\sb{\Sz\Cath/\pi\sb{0}}(\pi\sb{0},t\sp{\ast} j\sb{1}\sp{\ast} M\sb{1})
  \ar[rr]_(0.4){q\sp{\ast}} &&
  \Hom\sb{\Sz\Cath/X\sb{0}}((X\sb{0}\xrw{\Id}X\sb{0}),\  (e\sp{\ast}j\sb{1}\sp{\ast}
  M\sb{1}\to X\sb{0}))
}
\end{equation*}
\nid The right vertical map $\chi$ in \wref{eq-maps-theta-xi-2} sends
\w{\bar f:X\sb{0} \to j\sb{1}\sp{\ast} M\sb{1}} to
\w[,]{\bar f e:X\sb{0} \to e\sp{\ast} j\sb{1}\sp{\ast} M\sb{1}} where $\bar f$ is given
by \w{\Id\top f} into the pullback
\wref{eqjone} defining \w[.]{j^{*}M\sb{1}} We now rewrite
\begin{equation*}
  \chi: \Hom\sb{\Sz\Cath/X\sb{0}} ((X\sb{0}\!\xrw{\Id}\!X\sb{0}),(j\sb{1}\sp{\ast}
  M\sb{1}\to X\sb{0}))\to
  \Hom\sb{\Sz\Cath/X\sb{0}} ((X\sb{0}\!\xrw{e\sb{0}}\!X\sb{0}),(e\sp{\ast} j\sp{\ast}
  M\xrw{\zl\sb{1}}X\sb{0}))
\end{equation*}
\nid in a different form, in order to show that it is an isomorphism.
Note that the map of groupoids \w{\zr=(\zr\sb{1},\Id):M\to X}
satisfies \w{\zr\sb{1}(m)=(\pt\sb{0}(m),\pt\sb{1}(m))} for all \w[,]{m\in M\sb{1}}
where \w[.]{M=\parbox{20mm}{\xymatrix@C=1.5pc@R=1.0pc
    {M\sb{1} \ar@<1ex>^{\pt\sb{0}}[r] \ar@<-1ex>_{\pt\sb{1}}[r] & X\sb{0}}}}
Thus the map \w{\bar f=\Id\top f} into
\wref{eqjone} has \w{\zr\sb{1} f=j=\zD\sb{X\sb{0}}} so
\w[,]{\zr f(a)=(a,a)=(\pt\sb{0} f(a),\pt\sb{1} f(a))} and thus
$f$ takes \w{X\sb{0}} to \w[.]{\uset{a\in X\sb{0}}{\cop}M(a,a)} Similarly,
a map \w{\bar{g}:X\sb{0}\to e\sp{\ast} j^{*}M\sb{1} }
into the pullback defining \w{e\sp{\ast} j^{*}M\sb{1} } is given by
\w{g\top e\sb{0}:X\sb{0}\to M\sb{1}\times X\sb{0}}
with \w[,]{\zD\sb{X\sb{0}}e\sb{0}e\sb{0}=\zD\sb{X\sb{0}}e\sb{0}=\zr\sb{1} g} so
\w[.]{g:X\sb{0} \rw \uset{a\in X\sb{0}}{\cop}M(tqa,tqa)}
For each $a\in X\sb{0}$ there is an isomorphism \w{\omega: M(a,a) \rw M(tqa,tqa)}
taking \w{m\in M(a,a)} to \w[,]{\zs(tqa,a)\circ m \circ \zs(a,tqa)} whose inverse
\w{\omega\sp{-1}: M(tqa,tqa) \rw M(a,a)} takes \w{m'} to
\w[.]{\zs(a,tqa)\circ m' \circ \zs(tqa,a)}

Given \w[,]{f:X\sb{0} \rw \uset{a}{\cop}M(a,a)} there is a commuting diagram
\begin{equation*}
\xymatrix@R=15pt@C=35pt{
X\sb{0} \ar^(0.4){f}[r] \ar_{e\sb{0}}[d] & \uset{a}{\cop}M(a,a) \ar^{\omega}[d] \\
X\sb{0} \ar_(0.4){f}[r] & \uset{a}{\cop}M(tqa,tqa)
}
\end{equation*}
so that \w[.]{\chi(\bar f)=(e\sb{0},f e\sb{0})=(e\sb{0},\omega f)}
Thus  $\chi$ is an isomorphism with inverse given by
\w[.]{\chi\sp{-1}(g)=(\Id,\omega\sp{-1}g)}

Finally, the isomorphism
\begin{equation*}
  \omega: \Hom\sb{\Sz\Cath/X\sb{0}} ((\Id\sb{X\sb{0}}),\,(q\sp{\ast} t\sp{\ast}
  j\sb{1}\sp{\ast} M\sb{1}\xrw{\zl''\sb{1}}X\sb{0}))\to
      \Hom\sb{\Sz\Cath/X\sb{0}} ((tq),\,(j\sb{1}\sp{\ast} M\sb{1}\xrw{\zl\sb{1}}X\sb{0}))
\end{equation*}
\nid sends $h$ to \w[,]{\omega(h)=l\sb{1} v\sb{1} h} where the maps \w{l\sb{1}} and
\w{v\sb{1}} are given by
\begin{myeq}\label{eq-maps-theta-xi-4}
\begin{gathered}
\xymatrix@R=15pt@C=35pt{
  q\sp{\ast} t\sp{\ast} j\sb{1}\sp{\ast} M\sb{1} \ar^{\zl''\sb{1}}[d] \ar^{v\sb{1}}[r] &
  t\sp{\ast} j\sb{1}\sp{\ast} M\sb{1}  \ar^{\zl'\sb{1}}[d] \ar^{l\sb{1}}[r] &
  j\sb{1}\sp{\ast} M\sb{1} \ar^{\zl\sb{1}}[d] \ar^{k\sb{1}}[r] & M\sb{1} \ar^{\zr\sb{1}}[d]\\
X\sb{0} \ar_{q}[r] & \pi\sb{0} \ar_{t}[r] & X\sb{0} \ar_{\zD\sb{X\sb{0}}}[r] & X\sb{1}
}
\end{gathered}
\end{myeq}
Define \w{f'} in \w{\Hom\sb{\Sz\Cath/X\sb{0}}((X\sb{0}\xrw{tq}X\sb{0}),\
  (j\sp{\ast} M\xrw{\zl\sb{1}}X\sb{0}))} by
\w{tq\top f} into the pullback \wref[.]{eqjone}
Thus \w[,]{\zr\sb{1} f=\zD\sb{X\sb{0}}tq} which also implies
\w[.]{\zr\sb{1} f=\zD\sb{X\sb{0}}(tq)(tq)}
It follows that there is a map \w{f''} making the following diagram commute
\begin{equation*}
\xymatrix@C=4pc@R=1.5pc{
X\sb{0} \ar@/^1pc/^{f}[rrd] \ar^{f''}[rd] \ar@/_1pc/_{tq}[rdd]& & \\
& q\sp{\ast} t\sp{\ast} j\sp{\ast}M\sb{1} \ar^{k\sb{1} l\sb{1} v\sb{1}}[r]
\ar_{\zl''\sb{1}}[d] &  M\sb{1} \ar^{\zr\sb{1}}[d] \\
&  X\sb{0} \ar_{\zD\sb{X\sb{0}}tq}[r] & X\sb{1}
}
\end{equation*}
\nid We claim that \w[.]{f'=l\sb{1} v\sb{1} f''=\omega(f'')} In fact,
\w[,]{k\sb{1} f'=f=k\sb{1} l\sb{1} v\sb{1} f''} while by \eqref{eq-maps-theta-xi-4} we have
\w[.]{\zl\sb{1} f'=tq=tq\,tq=tq\zl''\sb{1} f''=t\zl'\sb{1} v\sb{1} f''=
  \zl\sb{1} l\sb{1} v\sb{1} f''}
Together these imply that \w[,]{f'=l\sb{1} v\sb{1} f''} as claimed.
Thus $\omega$ is surjective.

Assume given \w{h,g\in \Hom\sb{\Sz\Cath/X\sb{0}}(X\sb{0},\,
  q\sp{\ast} t\sp{\ast} j\sb{1}\sp{\ast} M\sb{1})} with \w[,]{\omega(h)=\omega(g)}
(i.e., \w[).]{l\sb{1} v\sb{1} h=l\sb{1} v\sb{1} g} Then
\w{k\sb{1} l\sb{1} v\sb{1} h=k\sb{1} l\sb{1} v\sb{1} g} and
\w[,]{\zl''\sb{1} h=tq = \zl''\sb{1} g}
so \w[.]{h=g} This shows that $\omega$ is injective, and thus an isomorphism.

To see that \w[,]{\omega q\sp{\ast}=\xi} Let
\w{h\in\Hom\sb{\Sz\Cath/X\sb{0}}(\pi\sb{0}, t\sp{\ast} j\sb{1}\sp{\ast} M\sb{1})} be given by
\w{f\top\Id:\pi\sb{0}\to M\sb{1}\times\pi\sb{0}} in the pullback
\wref[.]{tjstarm} Then \w{k\sb{1} l\sb{1} h=f} and \w[,]{\zl'\sb{1} h=\Id} and
so \w[.]{\zr\sb{1} fq = \zr\sb{1} k\sb{1} l\sb{1} h q=\zD\sb{X\sb{0}} t \zl'\sb{1} h
q=\zD\sb{X\sb{0}}tq=\zD\sb{X\sb{0}}(tq)(tq)} Hence there is a map \w{h'}
making
\begin{equation*}
\xymatrix@C=3pc@R=1.5pc{
X\sb{0} \ar@/^1pc/^{fq}[rrd] \ar^{h'}[rd] \ar@/_1pc/_{tq}[rdd]& & \\
& q\sp{\ast} t\sp{\ast} j^{*}M\sb{1} \ar^{k\sb{1} l\sb{1} v\sb{1}}[r] \ar_{\zl''\sb{1}}[d] &
M\sb{1} \ar^{\zr\sb{1}}[d] \\
&  X\sb{0} \ar_{\zD\sb{X\sb{0}}tq}[r] & X\sb{1}
}
\end{equation*}
\nid commute, and \w[,]{h'=q\sp{\ast}h} so that
\w[.]{\omega(q\sp{\ast}h)=\omega(h')=l\sb{1} v\sb{1} h'}
Moreover, \w{k\sb{1} l\sb{1} v\sb{1} h'=fq=k\sb{1} l\sb{1} h q} and
$$
\zD\sb{X\sb{0}} t \zl'\sb{1} v\sb{1} h' = \zD\sb{X\sb{0}} \zl\sb{1} l\sb{1} v\sb{1} h' =
\zr\sb{1} k\sb{1} l\sb{1} v\sb{1} h' = \zr\sb{1} f q=
\zD\sb{X\sb{0}}tq\,tq=\zD\sb{X\sb{0}}tq
$$
\nid so \w[,]{\zl'\sb{1} v\sb{1} h' =q \zD\sb{X\sb{0}}t \zl'\sb{1} v\sb{1} h'=
  q\zD\sb{X\sb{0}}tq=q=\zl'\sb{1} h q}
since \w{q\zD\sb{X\sb{0}}t=\Id} and \w[.]{\zl'\sb{1} h=\Id} This implies that
\w[.]{v\sb{1} h' = h q}
We deduce that \w[,]{\omega q\sp{\ast}(h)=l\sb{1} h q=\xi(h)} so \w[.]{\omega q\sp{\ast}=\xi}
\end{proof}

\begin{proposition}\label{pro-maps-theta-xi-1}
For any \w{X=HY} as in \wref{splitobj} and $X$-module $M$, there is a short exact sequence
\begin{myeq}\label{eq-maps-theta-xi-13}
\begin{split}
  0\rw \pi\sb{1}\map\sb{\Sz\Trt/X}(X,M)&\xrw{j\sp{\ast}}
  \pi\sb{1}\map\sb{\Sz\Trt/\dX\sb{0}}(dX\sb{0},j\sp{\ast} M)\\
  & \xrw{\zvt'} \Hom\sb{\Sz\Trt/X}(X,M)\rw 0
\end{split}
\end{myeq}
\end{proposition}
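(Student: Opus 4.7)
The plan is to deduce the claimed short exact sequence \wref{eq-maps-theta-xi-13} from the algebraic short exact sequence of Proposition \ref{pro-short-exact-1} (applied with $\clC=\Cath$), by translating each term and each map through the identifications already established in the preceding lemmas.

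First I would identify the three terms of \wref{eq-maps-theta-xi-13} with the terms $\Hom(\pi\sb{0},t\sp{\ast}j\sb{1}\sp{\ast}M\sb{1})$, $\Hom(X\sb{0},j\sb{1}\sp{\ast}M\sb{1})$ and $\Hom(X,e\sp{\ast}M)$ of Proposition \ref{pro-short-exact-1}. For the leftmost term, the chain of weak equivalences $\bar{q}\sb{\ast}$ and $\bar{t}\sp{\ast}$ of \S \ref{sbs-map-spaces}, combined with Lemma \ref{lem-maps-theta-xi-1} applied to $\pi\sb{0}$, yields $\pi\sb{1}\map\sb{\Sz\Trt/X}(X,M)\cong \Hom(\pi\sb{0},t\sp{\ast}j\sb{1}\sp{\ast}M\sb{1})$. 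For the middle term, Lemma \ref{lem-maps-theta-xi-1} applied to $X\sb{0}$ gives $\pi\sb{1}\map\sb{\Sz\Trt/dX\sb{0}}(dX\sb{0},j\sp{\ast}M)\cong \Hom\sb{\Sz\Cath/X\sb{0}}((X\sb{0}\xrw{\Id}X\sb{0}),j\sb{1}\sp{\ast}M\sb{1})$, which I then compose with the $(tq)\sp{\ast}$-isomorphism of Lemma \ref{lem-maps-theta-xi-3} to land in $\Hom(X\sb{0},j\sb{1}\sp{\ast}M\sb{1})$. For the rightmost term, the isomorphism $\zf$ of \wref{def-phi} gives $\Hom\sb{\Sz\Trt/X}(X,M)\cong \Hom(X,e\sp{\ast}M)$.

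Next I would verify that under these identifications the two maps in \wref{eq-maps-theta-xi-13} correspond respectively to $\xi$ and $\zvt$ of Proposition \ref{pro-short-exact-1}. The commutativity of the square \wref{eqlemtx} from Lemma \ref{lem-maps-theta-xi-2}, together with the identity $\omega q\sp{\ast}=\xi$ established there, shows that $j\sp{\ast}$ is identified with $\xi$. The commutativity of \wref{eq-maps-theta-xi-12} in Lemma \ref{lem-maps-theta-xi-3} states $\zf\circ\zvt'=\zvt\circ(tq)\sp{\ast}$, with both $\zf$ and $(tq)\sp{\ast}$ isomorphisms, so $\zvt'$ is identified with $\zvt$. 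Exactness of \wref{eq-maps-theta-xi-13} then follows from the exactness of Proposition \ref{pro-short-exact-1}, transported through these identifications.

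The main obstacle is ensuring that the two different isomorphisms from the middle term $\pi\sb{1}\map\sb{\Sz\Trt/dX\sb{0}}(dX\sb{0},j\sp{\ast}M)$ onto $\Hom(X\sb{0},j\sb{1}\sp{\ast}M\sb{1})$ — one coming from Lemma \ref{lem-maps-theta-xi-1} composed with the $(tq)\sp{\ast}$-iso of Lemma \ref{lem-maps-theta-xi-3}, the other coming from Lemma \ref{lem-maps-theta-xi-1} composed with the $\omega\chi$ built into Lemma \ref{lem-maps-theta-xi-2} — actually coincide, so that the translations $j\sp{\ast}\leftrightarrow\xi$ and $\zvt'\leftrightarrow\zvt$ take place through the same middle identification. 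This is a delicate but essentially routine bookkeeping check, obtained by unwinding the explicit formulas for $\omega$, $\chi$, $(tq)\sp{\ast}$ and $\zf$; the preceding lemmas have been arranged precisely so that this translation goes through cleanly.
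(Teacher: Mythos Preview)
Your proposal is correct and follows essentially the same approach as the paper's own proof: apply Proposition~\ref{pro-short-exact-1} with $\clC=\Cath$, and translate the three terms and two maps via the identifications of Lemmas~\ref{lem-maps-theta-xi-3} and~\ref{lem-maps-theta-xi-2} (the paper's proof is a terse two-line reference to exactly these ingredients, namely \wref{eq-maps-theta-xi-12} and the top right corner of \wref[).]{eqlemtx} Your explicit treatment of the compatibility of the two middle-term identifications is a welcome elaboration of what the paper leaves implicit.
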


\begin{proof}
  This follows by taking \w{\clC=\Cath} in Proposition \ref{pro-short-exact-1},
  together with \wref{eq-maps-theta-xi-12}
  and the top right corner of \wref{eqlemtx} identified with
  \w[.]{\pi\sb{1}\map\sb{\Sz\Trt/\dX\sb{0}}(dX\sb{0},j\sp{\ast} M)}
\end{proof}

Let \w{X\in\Trt} and \w{M\in[(\Trt,X\sb{0})/X]\sb{\ab}} and
$$
\bar I \ck\sb{\bl} X\in\funcat{}{\SO\mi\Cat/IX}
$$
be as in Section \ref{sbs-com-res}. Note that the augmentation
\w{\zve:\ck\sb{\bl} X\rw X} can be thought of as a map to the constant simplicial object,
so a compatible sequence of maps \w{\zve\sb{n}:\ck\sb{n} X\rw X} in $\Trt$
allowing us to pull back $M$ to \w[.]{\zve\sp{\ast}\sb{n} M}

\begin{definition}\label{def-maps-theta-xi-1}
  For each \w{X\in \Trt} and \w[,]{M\in [(\Trt,X\sb{0})/X]\sb{\ab}}
    the comonad cohomology of $X$ with coefficients in $M$ is defined by
  \begin{equation*}
    H\sp{s}\sb{\Com}(X,M)=\pi\sp{s} \Hom\sb{\Sz\Trt/X}(\ck\sb{\bl}X,M)\;.
  \end{equation*}
\end{definition}

\begin{theorem}\label{the-maps-theta-xi-1}
  Assume given  \w{X\in\Trt} and \w[.]{M\in [(\Trt,X\sb{0})/X]\sb{\ab}} Then there is a
  long exact sequence of abelian groups
\begin{equation*}
\begin{split}
  \cdots \rw \HSO{n}{I X}{M}\rw& \HSO{n}{I \dX\sb{0}}{j\sp{\ast} M}\rw
  H\sp{n}\sb{\Com}(X,M)\rw \\
    & \rw \HSO{n+1}{I X}{M}\rw\cdots
\end{split}
\end{equation*}
\end{theorem}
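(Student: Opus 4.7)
The plan is to apply Proposition~\ref{pro-maps-theta-xi-1} levelwise to the comonad resolution $\ck_{\bl}X$ and then pass to the long exact sequence in cohomotopy.

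First, I observe that Proposition~\ref{pro-maps-theta-xi-1} is applicable levelwise. By Remark~\ref{rem-mon}, for each $n\geq 0$ the track category $\ck_n X = \ck^{n+1}X$ is of the form $LZ$ for $Z = FVU\ck^n X \in\Cath$, and hence by \wref{adjpair.eq1A} equals $H(Y_n)$ for the split epi $Y_n = (Z_s\cop Z_t\xrightarrow{F} Z)\in\Spl\,\Cath$. In particular $(\ck_n X)_0 = Z\cop Z$ and $\Pz \ck_n X = Z$, and Proposition~\ref{pro-maps-theta-xi-1} gives a short exact sequence of abelian groups
\begin{equation*}
0\to \pi_1\map_{\Sz\Trt/\ck_n X}(\ck_n X, M_n)\to \pi_1\map_{\Sz\Trt/d(\ck_n X)_0}(d(\ck_n X)_0, j_n^{\ast}M_n)\to \Hom_{\Sz\Trt/\ck_n X}(\ck_n X, M_n)\to 0,
\end{equation*}
where $M_n:=\zve_n^{\ast}M$ is the pullback of the $X$-module $M$ along $\zve_n:\ck_n X\to X$.

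Next, I would verify naturality of this short exact sequence with respect to the cosimplicial maps. Since the constructions defining $j^{\ast}$, $\zvt'$ and the three $\Hom$/mapping space terms in Proposition~\ref{pro-maps-theta-xi-1} are functorial in maps $X=HY\to X'=HY'$ coming from $\Spl\,\Cath$, and since the face and degeneracy operators of $\ck_{\bl}X$ are such maps, this yields a short exact sequence of cosimplicial abelian groups
\begin{equation*}
0\to \pi_1\map_{\Sz\Trt/X}(\ck_{\bl}X, M)\to \pi_1\map_{\Sz\Trt/\dX_0}(d(\ck_{\bl}X)_0, j^{\ast}M)\to \Hom_{\Sz\Trt/X}(\ck_{\bl}X, M)\to 0.
\end{equation*}
Passing to the long exact sequence in cohomotopy $\pi^{\bl}$ then yields a long exact sequence of abelian groups relating the three corresponding cohomotopy groups.

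Finally, the three terms are identified with the desired cohomology groups. The middle term $\pi^s\pi_1\map_{\Sz\Trt/X}(\ck_{\bl}X, M)$ coincides with $\HSO{s}{IX}{M}$ by Theorem~\ref{the-com-res-2} (applied after identifying $\map_{\Sz\Trt/X}$ with $\map_{\SO\mi\Sz{\Cat}/IX}$ via the fully faithful nerve embedding $\bar I:\Trt\hookrightarrow\SO\mi\Cat$, under which an $X$-module corresponds to an Eilenberg--Mac~Lane object, as in Lemma~\ref{lem-maps-theta-xi-1}). The second term is identified with $\HSO{s}{I\dX_0}{j^{\ast}M}$ by Corollary~\ref{cor-com-res-1}, using that $d(\ck_{\bl}X)_0 = \iov\,\dd(\ck_{\bl}X)_0$ after applying $I$. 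The third term is $H^{s}_{\Com}(X,M)$ by Definition~\ref{def-maps-theta-xi-1}. Substituting these identifications gives the desired long exact sequence.

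The main obstacle is the naturality/identification step: checking carefully that the maps $\zvt'$, $j^{\ast}$, and the $(tq)^{\ast}$-type rewritings appearing in Proposition~\ref{pro-maps-theta-xi-1} and Lemma~\ref{lem-maps-theta-xi-3} are compatible with the simplicial structure of $\ck_{\bl}X$, and that the two a~priori distinct identifications of the middle term \wh one via Theorem~\ref{the-com-res-2} and one via the short exact sequence assembled from Proposition~\ref{pro-maps-theta-xi-1} \wh agree on the nose (not merely up to a sign ambiguity), so that the resulting connecting homomorphism is well defined. The other two identifications, and the exactness itself, are routine once this compatibility is in place.
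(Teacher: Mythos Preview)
Your proposal is correct and follows essentially the same route as the paper: apply Proposition~\ref{pro-maps-theta-xi-1} levelwise to the comonad resolution (using Remark~\ref{rem-mon} to see each $\ck_n X$ has the form $HY$), assemble a short exact sequence of cosimplicial abelian groups, pass to the long exact sequence in cohomotopy, and identify the three terms via Theorem~\ref{the-com-res-2}, Corollary~\ref{cor-com-res-1}, and Definition~\ref{def-maps-theta-xi-1}. One small slip: what you call ``the middle term'' $\pi^s\pi_1\map_{\Sz\Trt/X}(\ck_{\bl}X,M)$ is actually the \emph{first} term of your short exact sequence; the identifications themselves are all correctly matched.
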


\begin{proof}
  By Remark \ref{rem-mon}. \w{\ck\sb{n}X} is homotopically discrete for each $n$, and we
  can choose a splitting \w{\Pz \ck\sb{n} X\xrw{t\sb{n}}\ck\sb{n} X}
  to \w[,]{\ck\sb{n} X\xrw{q\sb{n}}\Pz \ck\sb{n} X} because \w{\ck\sb{n} X=LA}
  with \w{A=FUV X\sb{n-1}} and \w{q\sb{n}:A\sb{s}\cop A\sb{t} \rw A}
  (see \S \ref{sbs-adjpair}).
  By Proposition \ref{pro-maps-theta-xi-1} there is a short exact sequence of
  cosimplicial abelian groups
\begin{equation*}
\begin{split}
  0\rw & \pi\sb{1}\map\sb{\Sz\Trt/X}(I\ck\sb{\bl}X,M)\xrw{j\sp{\ast}}\pi\sb{1}
  \map\sb{\Sz\Trt/dX\sb{0}}(I\!\dd\,(\ck\sb{\bl}X)\sb{0},j\sp{\ast} M)\rw \\
    & \rw \Hom\sb{\Sz\Trt/X}(\ck_\bl X,M)\rw 0
\end{split}
\end{equation*}
where we use the augmentation $\zve_\bl:\ck_\bl X\rw X$ to pull back $M$ to $\zve_\bl M$.

We therefore obtain a corresponding long exact sequence
\begin{equation*}
\begin{split}
  \rw \pi\sp{s}\pi\sb{1}\map\sb{\Sz\Trt/X}(\ck\sb{\bl}X,M)&\xrw{j\sb{\ast}}
  \pi\sp{s}\pi\sb{1} \map\sb{\Sz\Trt/dX\sb{0}}(\dd\,(\ck\sb{\bl}X)\sb{0},j\sp{\ast} M)\rw \\
    & \rw \pi\sp{s}\Hom\sb{\Sz\Trt/X}(\ck_\bl X,M)\rw \cdots\;.
\end{split}
\end{equation*}
By definition, \w[,]{\pi\sp{s} \Hom\sb{\Sz\Trt/X}(\ck_\bl X,M)=H\sp{s}_C(X,M)}
with
$$
\pi\sp{s}\pi\sb{1} \map\sb{\Sz\Trt/X}(\bar I\ck\sb{\bl}X,M)= \HSO{s}{I X}{M}
$$
\nid and \w{\pi\sp{s}\pi\sb{1} \map\sb{\Sz\Trt/\dX\sb{0}}(\dd\,(\ck_\bl X)\sb{0},j\sp{\ast} M)=\HSO{s}{I\!\dX\sb{0}}{j\sp{\ast} M}}
by Theorem \ref{the-com-res-2} and Corollary \ref{cor-com-res-1}.
\end{proof}

\begin{corollary}\label{cor-maps-theta-xi-1}
For \w{X\in\Trt} with \w{X\sb{0}} a free category and \w{M\in[(\Trt,X\sb{0})/X]\sb{\ab}}
\begin{myeq}\label{eq-maps-theta-xi-141}
  \HSO{n+1}{IX}{M}\cong H\sp{n}\sb{\Com}(X,M)
\end{myeq}
\nid for each \w[.]{n\geq 1}
\end{corollary}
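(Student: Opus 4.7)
The plan is to deduce this directly from the long exact sequence of Theorem \ref{the-maps-theta-xi-1}, by showing that under the freeness hypothesis on $X\sb{0}$ the middle term $\HSO{n}{I\dX\sb{0}}{j\sp{\ast}M}$ vanishes for all $n \geq 1$; the claimed isomorphism then falls out of exactness.

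First I would invoke Theorem \ref{the-maps-theta-xi-1} to obtain the long exact sequence
\begin{equation*}
\cdots \rw \HSO{n}{I\dX\sb{0}}{j\sp{\ast}M} \rw H\sp{n}\sb{\Com}(X,M) \rw \HSO{n+1}{IX}{M} \rw \HSO{n+1}{I\dX\sb{0}}{j\sp{\ast}M} \rw \cdots
\end{equation*}
Then I would argue that $\HSO{n}{I\dX\sb{0}}{j\sp{\ast}M} = 0$ for every $n \geq 1$, using the chain of identifications already invoked in the proof of Theorem \ref{the-com-res-2}: by \cite[Theorem 3.10]{BauesBlanc2011}, for any category $\clC$ and any $s > 0$ we have
\begin{equation*}
  \HSO{s}{\dd\,\clC}{M}~=~H\sp{s+1}\sb{\BW}(\clC,M)~,
\end{equation*}
and by \cite[Theorem 6.3]{BauesWirsching1985} the Baues-Wirsching cohomology of a free category vanishes in positive degrees. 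Applied to $\clC = X\sb{0}$, which is free by hypothesis, this gives $\HSO{n}{I\dX\sb{0}}{j\sp{\ast}M} = 0$ for all $n \geq 1$.

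Substituting this vanishing into the long exact sequence, both the term to the left of $H\sp{n}\sb{\Com}(X,M)$ and the term to the right of $\HSO{n+1}{IX}{M}$ are zero for $n \geq 1$, so the connecting map $H\sp{n}\sb{\Com}(X,M) \rw \HSO{n+1}{IX}{M}$ is an isomorphism, yielding \wref{eq-maps-theta-xi-141}. The only non-routine point is the verification that the Baues-Wirsching vanishing applies to $j\sp{\ast}M$ regarded as a natural system on $X\sb{0}$, but this is immediate from the interpretation of modules over $\dX\sb{0}$ as natural systems and the fact that freeness is a property of $X\sb{0}$ independent of the coefficient module.
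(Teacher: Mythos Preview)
Your proposal is correct and follows essentially the same approach as the paper: both invoke the long exact sequence of Theorem \ref{the-maps-theta-xi-1}, use \cite[Theorem 3.10]{BauesBlanc2011} to identify $\HSO{n}{I\dX\sb{0}}{j\sp{\ast}M}$ with $H\sb{\BW}^{n+1}(X\sb{0},j\sp{\ast}M)$, and then apply \cite[Theorem 6.3]{BauesWirsching1985} to obtain the vanishing from freeness of $X\sb{0}$.
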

\begin{proof}
  Recall from \cite[Theorem 3.10]{BauesBlanc2011} that
  \w[,]{ \HSO{n}{I\!\dX\sb{0}}{j\sp{\ast}M}\cong H\sb{\BW}^{n+1}(I X\sb{0},j\sp{\ast} M)}
  and, since $X\sb{0}$ is free,   \w{H\sb{\BW}^{n+1}(IX\sb{0},j\sp{\ast}M)=0} by \cite[Theorem 6.3]{BauesWirsching1985}.
Thus the long exact sequence of Theorem \ref{the-maps-theta-xi-1} yields
\wref{eq-maps-theta-xi-141} for each \w[.]{n\geq 1}
\end{proof}

\begin{lemma}\label{lem-maps-theta-xi-4}
There is a functor \w{S:\Trt \rw \Trt} such that \w{(s_X)\sb{0}} is a free category,
for each \w[,]{X\in\Trt} with a natural $2$-equivalence \w{s_X:S(X)\rw X}
\end{lemma}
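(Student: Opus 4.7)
The plan is to define $S$ by base-changing the internal-groupoid structure of $X=(X\sb{1}\rightrightarrows X\sb{0})$ in $\Cath$ along the counit of the free-category comonad $FV$ introduced in \S\ref{sbs-monad-k}. Let $\widetilde{X}\sb{0}:=FV X\sb{0}$, with counit $\zve\sb{X}:\widetilde{X}\sb{0}\rw X\sb{0}$ in $\Cath$. Since $F$ is the free category functor and $V$ the forgetful functor, $\widetilde{X}\sb{0}$ is a free category with object set $\clO$, and $\zve\sb{X}$ is identity on objects and surjective on morphisms. Define $S(X)\in\Gpd(\Cath)=\Trt$ by setting
\[
  S(X)\sb{0}~=~\widetilde{X}\sb{0}~, \qquad
  S(X)\sb{1}~=~\widetilde{X}\sb{0}\tiund{X\sb{0}} X\sb{1}\tiund{X\sb{0}}\widetilde{X}\sb{0}~,
\]
where the pullbacks are taken along $\zve\sb{X}$ and $d\sb{0},d\sb{1}:X\sb{1}\rw X\sb{0}$. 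The source and target are the outer projections; the identity is $f\mapsto(f,\zD\zve\sb{X}(f),f)$; the composition and inversion are induced fibrewise by those of $X\sb{1}$ via the universal property of the pullback. Since pullback in $\Cath$ preserves the internal-groupoid axioms, $S(X)$ really lies in $\Trt$, and the middle projection together with $\zve\sb{X}$ defines a morphism $s\sb{X}:S(X)\rw X$ in $\Trt$ whose zeroth component is the free category $\widetilde{X}\sb{0}$.

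Functoriality and naturality: any $h:X\rw Y$ in $\Trt$ induces $FV(h\sb{0}):\widetilde{X}\sb{0}\rw\widetilde{Y}\sb{0}$ compatible with the counits by naturality of $\zve$, and combined with $h\sb{1}$ this yields $S(h):S(X)\rw S(Y)$ via the universal property of the pullback; naturality of $s$ is immediate from the same compatibility.

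It remains to show that $s\sb{X}$ is a $2$-equivalence, i.e., that for every $a,b\in\clO$ the induced functor of hom-groupoids $s\sb{X}(a,b):S(X)(a,b)\rw X(a,b)$ is an equivalence. By construction the objects of $S(X)(a,b)$ are morphisms $\varphi:a\rw b$ in $\widetilde{X}\sb{0}$, and morphisms $\varphi\rw\psi$ are 2-cells $\tau\in X\sb{1}(a,b)$ satisfying $d\sb{0}\tau=\zve\sb{X}\varphi$ and $d\sb{1}\tau=\zve\sb{X}\psi$. The functor sends $\varphi$ to $\zve\sb{X}\varphi$, which is surjective onto the 1-cells of $X(a,b)$, since every morphism of $X\sb{0}$ is hit by its length-one word in $\widetilde{X}\sb{0}$; on hom-sets between chosen $\varphi,\psi$ it is the identity on the $X\sb{1}$-component, hence a bijection onto the set of 2-cells $\zve\sb{X}\varphi\Rw\zve\sb{X}\psi$ in $X$. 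Thus $s\sb{X}(a,b)$ is essentially surjective and fully faithful, hence an equivalence of groupoids, and $s\sb{X}$ is a $2$-equivalence. The only step requiring care is the first, namely verifying that the pullback inherits a compatible internal-groupoid structure from $X$ and that the whole construction is strictly functorial in $X$; this is a routine diagram chase using the universal property of the pullback, and once it is done the equivalence on hom-groupoids is immediate.
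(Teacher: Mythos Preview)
Your proof is correct and follows essentially the same approach as the paper: both define $S(X)$ by pulling back the internal groupoid structure of $X$ along the counit $\zve\sb{X\sb{0}}:FVX\sb{0}\rw X\sb{0}$ (the paper writes the pullback as $Y\sb{1}\to Y\sb{0}\times Y\sb{0}$ over $X\sb{1}\to X\sb{0}\times X\sb{0}$, which is the same as your iterated fibre product), and both verify the $2$-equivalence by checking that each $s\sb{X}(a,b)$ is fully faithful and surjective on objects. Your treatment is slightly more explicit about functoriality and naturality, which the paper leaves implicit.
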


\begin{proof}
 Given \w{X\in\Gpd\;\clC} and a map \w{f\sb{0}:Y\sb{0}\rw X\sb{0}} in $\clC$, consider the pullback
\begin{myeq}\label{eq-maps-theta-xi-14}
\begin{gathered}
\xymatrix@C=40pt@R=17pt{
Y\sb{1} \ar[r] \ar_{f\sb{1}}[d] & Y\sb{0}\times Y\sb{0} \ar^{f\sb{0}\times f\sb{0}}[d]\\
X\sb{1} \ar_{(\pt\sb{0},\pt\sb{1})}[r] &  X\sb{0}\times X\sb{0}
}
\end{gathered}
\end{myeq}
\nid in $\clC$. Then there is \w{X(f\sb{0})\in\Gpd\,\clC} with \w{(X(f\sb{0}))\sb{0}=Y\sb{0}} and \w[,]{X(f\sb{0}))\sb{1}=Y\sb{1}}
such that  \w{(f\sb{0},f\sb{1}):X(f\sb{0})\rw X} is an internal functor.

Now let \w{\zve\sb{X\sb{0}}:FV X\sb{0} \rw X\sb{0}} be the counit of the adjunction
\w{F\dashv V} of  \S\ref{sbs-monad-k}, and let \w[,]{SX:=X(\zve\sb{X\sb{0}})} where \w{(\zve\sb{X\sb{0}})\in\Trt} is
the construction \wref[.]{eq-maps-theta-xi-14} Then \w{(SX)\sb{0}=FVX\sb{0}} is a free category, and there
is a map  \w{s_X:SX\rw X} in \w[.]{\Trt} We wish to show that it is a $2$-equivalence.

Since \w{s_X} is the identity on objects, to  it suffices to show that for each \w[,]{a,b\in\clO}  the map
\w{s_X(a,b):S(X)(a,b)\rw X(a,b)} is an equivalence of categories. The pullback
\begin{myeq}\label{eq-maps-theta-xi-16}
\begin{gathered}
\xymatrix@C=20pt@R=18pt{
(S(X))\sb{1} \ar[rr] \ar_{{s_X}\sb{1}}[d] && FVX\sb{0}\times FVX\sb{0} \ar^{\zve\sb{X\sb{0}}\times \zve\sb{X\sb{0}}}[d]\\
X\sb{1} \ar[rr] && X\sb{0}\times X\sb{0}
}
\end{gathered}
\end{myeq}
in \w{\Cat} induces a pullback of sets
\begin{equation*}
\xymatrix@C=10pt@R=18pt{
  \{S(X)(a,b)\}\sb{1} \ar[rrr] \ar_{\{s_X(a,b)\}\sb{1}}[d] &&&
  \{FVX\sb{0}(a,b)\times FVX\sb{0}(a,b)\}\sb{1} \ar^{\zve\sb{X\sb{0}}\times \zve\sb{X\sb{0}}}[d]\\
\{X(a,b)\}\sb{1} \ar[rrr] &&& \{X\sb{0}(a,b)\times X\sb{0}(a,b)\}\sb{1}
}
\end{equation*}
Thus for each \w[,]{(c,d)\in \{FVX\sb{0}(a,b)\times FVX\sb{0}(a,b)\}\sb{1}} the map \w{\{s_X(a,b)\}(c,d)} is a bijection.
Thus  \w{s_X(a,b)} is fully faithful. Since \w{(FVX\sb{0})\sb{1}\rw X\sb{01}} is surjective, as is
\w[,]{(FVX\sb{0})(a,b)\rw X\sb{0}(a,b)} \w{s_X(a,b)} is surjective on objects, so it is an equivalence of categories.
\end{proof}

We finally use our previous results to conclude that the $\SO$-cohomology of a track category can always be calculated from a comonad cohomology.
\begin{corollary}\label{cor-maps-theta-xi-2}
  For \w[,]{X\in\Trt} and $M$ an $X$-module, \w{\HSO{n+1}{IX}{M}\cong H\sp{n}_C(S(X),M)}
for each \w[,]{n>1} where $S(X)$ is as in Lemma \ref{lem-maps-theta-xi-4}.
\end{corollary}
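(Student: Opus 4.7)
The plan is to reduce Corollary~\ref{cor-maps-theta-xi-2} to Corollary~\ref{cor-maps-theta-xi-1} by applying the latter to the replacement $S(X)$ provided by Lemma~\ref{lem-maps-theta-xi-4}. This $S(X)$ has been engineered precisely for the job: $(S(X))_0$ is free, which is the hypothesis of Corollary~\ref{cor-maps-theta-xi-1}, and $s_X \colon S(X) \to X$ is a $2$-equivalence, which allows us to transfer $\SO$-cohomology between $IX$ and $I(S(X))$.

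In detail, I would first pull back the $X$-module $M$ along $s_X$ (which is the identity on $\clO$) to obtain an $S(X)$-module $s_X^{*}M$. By Remark~\ref{rem-track-cat-0}, $I(s_X)\colon I(S(X)) \to IX$ is then a Dwyer--Kan equivalence in $\SOC$, and the twisted Eilenberg--Mac~Lane object $\EM{S(X)}{s_X^{*}M}{n+1}$ is the pullback of $\EM{X}{M}{n+1}$ along $I(s_X)$. Invariance of homotopy classes of maps over the appropriate base under this weak equivalence gives
\[
  \HSO{n+1}{IX}{M} \;\cong\; \HSO{n+1}{I(S(X))}{s_X^{*}M}.
\]
Since $(S(X))_0$ is free, Corollary~\ref{cor-maps-theta-xi-1} applied to $S(X)$ with coefficients $s_X^{*}M$ gives
\[
  \HSO{n+1}{I(S(X))}{s_X^{*}M} \;\cong\; H^{n}_{\Com}(S(X),\, s_X^{*}M)
\]
for every $n \geq 1$. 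Composing these two isomorphisms and identifying the right-hand side with $H^{n}_{\Com}(S(X),M)$ (where $M$ is understood as an $S(X)$-module via $s_X$) finishes the proof; in particular the condition $n>1$ in the statement is covered.

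The only point requiring care is the $\SO$-cohomology transfer along the $2$-equivalence $s_X$: one must check that pullback along the Dwyer--Kan equivalence $I(s_X)$ takes $\EM{X}{M}{n+1}$ to $\EM{S(X)}{s_X^{*}M}{n+1}$, and that the induced map on derived mapping spaces over the respective classifying objects is a weak equivalence. Since $s_X$ is the identity on $\clO$, this is a routine compatibility within the simplicial model structure on $\SOC$ recalled in~\S\ref{sbs-theta-cohom}, combined with the fact that $I$ sends $2$-equivalences to Dwyer--Kan equivalences.
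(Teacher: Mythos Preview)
Your proposal is correct and follows essentially the same route as the paper: use that $s_X\colon S(X)\to X$ is a $2$-equivalence (so $I(s_X)$ is a Dwyer--Kan equivalence and $\SO$-cohomology transfers), then apply Corollary~\ref{cor-maps-theta-xi-1} to $S(X)$, whose $(S(X))_0$ is free. Your extra care in writing the coefficients as $s_X^{*}M$ and flagging the compatibility of the twisted Eilenberg--Mac~Lane objects is a harmless elaboration of what the paper leaves implicit.
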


\begin{proof}
  By Lemma \ref{lem-maps-theta-xi-4} the map $s_X:S(X)\rw X$ is a 2-equivalence in \w[,]{\Trt}
  since \w{(s_X)\sb{0}=\zve\sb{X\sb{0}}} is bijective on objects. Hence \w{Is_X} is a Dwyer-Kan equivalence
  in \w[,]{\SO\mi\Cat} so  \w[.]{\HSO{n}{IX}{M}\cong\HSO{n}{IS(X)}{M}}
By Lemma \ref{lem-maps-theta-xi-4}, \w{SX} satisfies the hypotheses of Corollary
\ref{cor-maps-theta-xi-1}, so also \w[.]{\HSO{n+1}{IS(X)}{M}\cong H\sp{n}\sb{\Com}(S(X),M)}
\end{proof}

%
%
\sect{The groupoidal case}\label{sec-groupoidal}

A $2$-groupoid is a special case of  a track category in which all cells are (strictly)
invertible. The category of such is denoted by \w[,]{\GGpdo=\Gpd(\Gpdo)}
with \w{i:\GGpdo\hookrightarrow \Trt} the full and faithful inclusion.

For \w{X\in\GGpdo} and \w{W=\ion\,\iov\ck_\bl X\in\funcat{3}{\Set}} as in
\S \ref{sbs-com-res}, let \w{S=W\up{1}\in\funcat{}{\funcat{2}{\Set}}} be $W$ thought
of as a simplicial object along the horizontal direction.
Thus for each \w[,]{i\geq 0} \w{S_i} is the bisimplicial set
\begin{equation*}
\mbox{\scriptsize
\xymatrix@C=3pc@R=1.5pc{
\cdots \ar@<1ex>[r] \ar@<0.0ex>[r]  \ar@<-1ex>[r] \ar@<1ex>[d] \ar@<0.0ex>[d]  \ar@<-1ex>[d]
& \tens{(\ck^{i+1} X)\sb{11}}{(\ck^{i+1} X)\sb{10}} \ar@<1ex>[d] \ar@<0.0ex>[d]  \ar@<-1ex>[d]
\ar@<0.5ex>[r] \ar@<-0.5ex>[r] & \clO \ar@<1ex>[d] \ar@<0.0ex>[d]  \ar@<-1ex>[d]\\
\tens{(\ck^{i+1} X)\sb{11}}{\clO} \ar@<1ex>[r] \ar@<0.0ex>[r]  \ar@<-1ex>[r] \ar@<0.5ex>[d]
\ar@<-0.5ex>[d]
&  (\ck^{i+1} X)\sb{11} \ar@<0.5ex>[r] \ar@<-0.5ex>[r] \ar@<0.5ex>[d] \ar@<-0.5ex>[d]
& \clO \ar@<0.5ex>[d] \ar@<-0.5ex>[d]\\
\tens{(\ck^{i+1} X)\sb{10}}{\clO} \ar@<1ex>[r] \ar@<0.0ex>[r]  \ar@<-1ex>[r]
&  (\ck^{i+1} X)\sb{10} \ar@<0.5ex>[r] \ar@<-0.5ex>[r]
& \clO
}}
\end{equation*}

Applying \w{\Diag:\funcat{2}{\Set}\rw \funcat{}{\Set}=\clS} dimensionwise
to \w{W\up{1}} we obtain \w[,]{\diov S\in \funcat{}{\funcat{}{\Set}}} with
\begin{myeq}\label{eq-groupoidal-1}
\Diag\!\up{3}W=\Diag\diov S=\Diag\diov Z
\end{myeq}
\nid for \w{Z:=W\up{2}\in\funcat{}{\funcat{2}{\Set}}} as in \wref[.]{defofz}

\begin{definition}\label{def-groupoidal-1}
  The \emph{classifying space} of \w{X\in\GGpdo} is \w[,]{BX=\Diag\Nb{2}X}
  where \w{\Nb{2}:\GGpdo\rw \funcat{2}{\Set}}  is the double nerve functor.
\end{definition}

\begin{remark}\label{rem-groupoidal-1}
By Lemma \ref{lem-com-res-1}, \w{\diov Z\rw IX} is a Dwyer-Kan
equivalence, where we think of \w{\diov Z} as an \ww{\SO}-category
by the discussion preceding the Lemma \ref{lem-com-res-1}. Conversely, we may also
think of \w{IX} as a bisimplicial set (implicitly, by applying the
nerve functor in the category direction), with \w{(\diov
Z)\sb{0}=(IX)\sb{0}=c(\clO)} (cf.\ \S \ref{snac}). Moreover,
\w{(\diov Z)\sb{j}\rw(IX)\sb{j}} is a weak homotopy equivalence
for all \w[.]{j\geq 0} Hence \w{\diov Z\rw IX} induces a weak
homotopy equivalence of diagonals
  \w[.]{\Diag\diov Z\simeq \Diag IX}
Since \w[,]{\Diag IX=\Diag\Nb{2}X=BX} by \wref{eq-groupoidal-1} we have
\begin{myeq}\label{eq-groupoidal-1bis}
  BX\simeq\Diag\!\up{3}W\;.
\end{myeq}
\nid The cohomology groups of  \w{X\in\GGpdo} with coefficients in  an $X$-module $M$
are defined to be \w[.]{H\sp{n-t}(BX,M)=\pi\sb{t}\map\sb{\funcat{}{\Sz\Set}}(BX,\ck(M,n))}
 By \wref{eq-groupoidal-1bis} this can be written as
\w[.]{\pi\sb{t} \map\sb{\funcat{}{\Sz\Set}}(\Diag\!\up{3}W,\ck(M,n))}
\end{remark}

\begin{lemma}\label{lem-groupoidal-11}
  Given \w[,]{\clC\in\Cath} with \w{N\clC\in\clS} viewed as a discrete $\SO$-category, and $M$ be a
  $\clC$-module, we have \w{H\sp{n}(B\clC,M)\cong \HSO{n}{N\clC}{M}} for each \w[.]{n\geq 0}
\end{lemma}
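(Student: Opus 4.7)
\end{lemma}

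The plan is to identify both sides with the same cosimplicial computation. Let $X := \dd\clC \in \Trt$ denote the discrete track category with only identity $2$-cells; then $X$ is homotopically discrete (Definition \ref{def-track-cat-1}), $IX = N\clC$ as a discrete $\SO$-category, and $\Diag\Nb{2}X = N\clC = B\clC$. Applying Theorem \ref{the-com-res-2} to $X$ gives
\[
\HSO{n}{N\clC}{M} \;=\; \pi\sp{n}\,\Cu,\qquad \Cu \;=\; \pi\sb{1}\,\map\sb{\SOC/N\clC}(\iov\ck\sb{\bl}\dd\clC,\, M).
\]

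On the other side, I would first observe that the argument of Remark \ref{rem-groupoidal-1} invokes only Lemma \ref{lem-com-res-1} and the basic identification $\Diag\Nb{2}X = BX$, so it applies to every $X\in\Trt$, not only to $X\in\GGpdo$. In particular, it produces a weak homotopy equivalence $B\clC = \Diag\Nb{2}\dd\clC \simeq \Diag\!\up{3}W$, where $W := \ion\,\iov\ck\sb{\bl}\dd\clC$. Via Lemma \ref{lem-sbs-bous-kan} (applied iteratively in the trisimplicial structure of $W$), this rewrites
\[
H\sp{n}(B\clC,M) \;=\; \pi\sb{0}\,\map\sb{\clS/B\clC}(B\clC,\, K(M,n))
\]
as $\pi\sp{0}$ of a cosimplicial mapping space indexed by the comonad resolution $\ck\sb{\bl}\dd\clC$; here $K(M,n)$ is the twisted Eilenberg--Mac~Lane space, which equals the classifying space of the $\SO$-Eilenberg--Mac~Lane object $\EM{\dd\clC}{M}{n}$.

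The main step, and expected main obstacle, is to identify these two cosimplicial mapping spaces on homotopy groups. For each $s$, $\ck\sb{s}\dd\clC$ is homotopically discrete with free underlying category $\Pz\ck\sb{s}\dd\clC$ (Remark \ref{rem-mon}). I would repeat the spectral sequence collapse argument used in the proof of Theorem \ref{the-com-res-2}, now also applied to the classifying-space cohomology cosimplicial: the vanishing of $H\sp{k}(B\clD,M)$ for a free category $\clD$ and $k>0$ (via \cite[Theorem 6.3]{BauesWirsching1985}, translated to ordinary cohomology of classifying spaces) forces the $E\sb{1}$-page to concentrate in a single row, so that both sides reduce to $\pi\sp{n}$ of the same cosimplicial abelian group $\Hom\sb{\Cath/\clC}(\Pz\ck\sb{\bl}\dd\clC,\, M\sb{1})$. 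This yields the claimed isomorphism for all $n\geq 0$, with the $n=0$ case handled by direct inspection since both sides reduce to global sections of $M$ over $\clC$.
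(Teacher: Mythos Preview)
Your approach is a substantial detour compared with the paper's argument, and it contains a circularity at the key step.

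The paper's proof is a one-line observation: since $N\clC$ is the \emph{discrete} $\SO$-category $c(\clC)$, the simplicial mapping space $\map\sb{\SOC}(N\clC,\,\ck(M,n))$ coincides with $\map\sb{\clS}(N\clC,\,\ck(M,n))$, because a map out of a constant simplicial object is determined levelwise by maps of sets. Taking $\pi\sb{0}$ then gives the identification directly. No comonad resolution, no spectral sequence.

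Your route instead reproduces the architecture of Proposition~\ref{pro-groupoidal-11}: set up a Bousfield--Kan spectral sequence for $H\sp{*}(B\clC,M)$ via the trisimplicial object $W$, argue that it collapses at $E\sb{1}$ to a single row, and match that row with the one produced by Theorem~\ref{the-com-res-2}. The collapse you need is that $H\sp{k}(B\clD,M)=0$ for every free category $\clD$ and every $k>0$. You justify this by citing \cite[Theorem~6.3]{BauesWirsching1985} ``translated to ordinary cohomology of classifying spaces''. But the standard comparison between Baues--Wirsching cohomology and cohomology of the nerve is \emph{unshifted}: $H\sp{k}\sb{\BW}(\clD,M)\cong H\sp{k}(B\clD,M)$. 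The Baues--Wirsching vanishing for free $\clD$ is in degrees $\geq 2$, so this yields only $H\sp{k}(B\clD,M)=0$ for $k\geq 2$, leaving a potentially nonzero row at $k=1$ in your spectral sequence. The \emph{shifted} identification $H\sp{k}(B\clD,M)\cong H\sp{k+1}\sb{\BW}(\clD,M)$ that would give vanishing for all $k>0$ is exactly what one obtains by composing the present Lemma with \cite[Theorem~3.10]{BauesBlanc2011}; invoking it here is circular. Indeed, in the paper the analogous collapse in the proof of Proposition~\ref{pro-groupoidal-11} is justified \emph{by first applying} Lemma~\ref{lem-groupoidal-11} to pass to $\SO$-cohomology, and only then using \cite{BauesBlanc2011} and \cite{BauesWirsching1985}.

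So the gap is concrete: your $E\sb{1}$-page on the classifying-space side need not be concentrated in a single row, and the ``translation'' you appeal to is precisely the statement under proof. The remedy is not to repair the spectral-sequence argument but to replace it with the paper's direct identification of mapping spaces.
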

\begin{proof}
Since \w{N\clC} is a discrete $\SO$-category, we see that
\w[,]{\map\sb{\Sz{\SO}\mi\Cat}(N\clC,\ck(M,n))} is \w[,]{\map\sb{\clS}(N\clC,\ck(M,n))}
so \w[.]{\HSO{n}{\clC}{M}=  \pi\sb{0} \map\sb{\clS}(N\clC,\ck(M,n))=H\sp{n}(B\clC,M)}
\end{proof}
\begin{proposition}\label{pro-groupoidal-11}
  For \w{X\in\GGpdo} and $M$ an $X$-module,
  \w{H\sp{s}(BX,M)=\HSO{s}{X}{M}} for any \w{s\geq 0}
\end{proposition}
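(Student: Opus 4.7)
The plan is to use the comonad resolution $\ck_\bl X$: applying both the internal nerve $I$ and the classifying space construction $B$ level-wise yields compatible cosimplicial computations of $\HSO{s}{IX}{M}$ and $H^s(BX, M)$. By Theorem \ref{the-com-res-2}, $\HSO{s}{IX}{M} \cong \pi^s \Cu$, where $\Cu$ is the cosimplicial abelian group with $s$-th term $C^s := \pi_1 \map_{\SO\mi\Sz{\Cat}/IX}(I\ck_s X, M)$, so it suffices to identify $\pi^s \Cu$ with $H^s(BX, M)$.

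First I would identify $C^s$ with $H^0(B\ck_s X, M)$ naturally in the cosimplicial variable $s$. Since $M$ is a $1$-Postnikov (Eilenberg-Mac~Lane) object in $\SO\mi\Cat/IX$, one has $\pi_1 \map_{\SO\mi\Sz{\Cat}/IX}(A, M) \cong \HSO{0}{A/IX}{M}$, in the spirit of Lemma \ref{lem-maps-theta-xi-1}. By Remark \ref{rem-mon} each $\ck_s X$ is homotopically discrete, so Remark \ref{rem-track-cat-0} provides a Dwyer-Kan equivalence $I\ck_s X \to N\Pz \ck_s X$, and Lemma \ref{lem-groupoidal-11} together with the homotopy invariance of $B$ under $2$-equivalences (applied to $\ck_s X \to \dd\,\Pz \ck_s X$) then yields $C^s \cong \HSO{0}{N\Pz \ck_s X/IX}{M} \cong H^0(B\Pz \ck_s X, M) \cong H^0(B\ck_s X, M)$.

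Second I would identify $H^s(BX, M)$ with $\pi^s H^0(B\ck_\bl X, M)$. By \eqref{eq-groupoidal-1bis} and the identity $\Diag^{(3)} W = \Diag(\Diag \Nb{2} \ck_\bl X) = |B\ck_\bl X|$, the space $BX$ is the realization of the simplicial space $B\ck_\bl X$. Totalizing $\map(B\ck_\bl X, K(M,n))$ via Lemma \ref{lem-sbs-bous-kan} furnishes a Bousfield-Kan spectral sequence $E_2^{p,q} = \pi^p H^q(B\ck_\bl X, M) \Rightarrow H^{p+q}(BX, M)$. The key vanishing input is $H^q(B\ck_s X, M) = 0$ for $q \geq 1$: by the previous paragraph $H^q(B\ck_s X, M) \cong \HSO{q}{N\Pz \ck_s X}{M} \cong H^{q+1}_{\BW}(\Pz \ck_s X, M)$ (using \cite[Theorem 3.10]{BauesBlanc2011}), and this vanishes by \cite[Theorem 6.3]{BauesWirsching1985} since $\Pz \ck_s X$ is free (Remark \ref{rem-mon}). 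Hence the spectral sequence collapses to row $q = 0$, giving $H^s(BX, M) \cong \pi^s H^0(B\ck_\bl X, M) \cong \pi^s \Cu \cong \HSO{s}{IX}{M}$.

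The main obstacle is setting up the Bousfield-Kan spectral sequence for cohomology of $|B\ck_\bl X|$ with the local coefficient system induced by $M \to X$, together with verifying that the dimension-wise isomorphisms $C^s \cong H^0(B\ck_s X, M)$ assemble into an isomorphism of cosimplicial abelian groups rather than a mere dimension-wise isomorphism. Both points should be handled by noting that every identification above is natural in $s$ and by invoking the mapping-space machinery of \S\ref{sbs-map-spaces} to express everything through homotopy pullbacks that are functorial in $\ck_\bl X$.
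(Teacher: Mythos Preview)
Your proposal is correct and follows essentially the same route as the paper. Both arguments set up the Bousfield--Kan spectral sequence for the cosimplicial space obtained by mapping the simplicial space \w{B\ck_\bl X} (which the paper writes as \w{\diov S}) into an Eilenberg--Mac~Lane object, collapse it using the vanishing of Baues--Wirsching cohomology of the free categories \w[,]{\Pz\ck_s X} and then identify the surviving row with the cosimplicial abelian group \w{\Cu} of Theorem \ref{the-com-res-2} via Lemma \ref{lem-groupoidal-11}; the naturality concern you flag is handled in the paper exactly as you suggest, by working throughout with the functorial comonad resolution.
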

\begin{proof}
By \wref{eq-groupoidal-1} and Lemma \ref{lem-sbs-bous-kan}
\begin{equation*}
\begin{split}
    \map\sb{\clS}(\Diag\!\up{3}W,\ck(M,n))&=\map\sb{\clS}(\Diag\diov S,\ck(M,n))\cong\\
   \cong &\Tot\map\sb{\clS}(\diov S,\ck(M,n))\;.
\end{split}
\end{equation*}
\nid Therefore, the homotopy spectral sequence for
\w[,]{W\sp{\bl}=\map\sb{\Sz{\funcat{}{\Set}}}(\diov S,\ck(M,n))}
with \w[,]{E\sp{s,t}\sb{2}=\pi\sp{s}\pi\sb{t} W\sp{\bl} \Rightarrow  \pi\sb{t-s}\Tot W^{\bl}}
has \w[.]{E\sp{s,t}\sb{1} = \pi\sb{t} \map((\diov S)\sb{s},\ck(M,n))}
But \w[,]{(\diov S)\sb{s}=\Diag I\ck\sp{s+1}X\simeq I\Pz \ck\sp{s+1}X} since
\w{\ck\sp{s+1}X} is homotopically discrete, so
\begin{equation*}
E\sp{s,t}\sb{1}= H\sp{n-t}(BI\Pz \ck\sp{s+1}X,M)=\HSO{n-t}{I\Pz \ck\sp{s+1}X}{M}\;,
\end{equation*}
by Lemma \ref{lem-groupoidal-11}.
Since \w{\ck\sp{s+1}X} is free, by \cite[Theorem 3.10]{BauesBlanc2011} we have
\begin{equation*}
E\sp{s,t}\sb{1}=\HSO{n-t}{I\Pz \ck\sp{s+1}X}{M}=H\sp{n-t+1}\sb{\BW}(I\Pz \ck\sp{s+1}X,M)=0
\end{equation*}
\nid for \w[.]{n\neq t} Thus the spectral sequence collapses at the $E\sb{1}$-term and
\begin{myeq}\label{eq-groupoidal-2}
  H\sp{s}(BX,M)=\pi\sb{n-s}\Tot W\sp{\bl}=\pi\sp{s}\pi\sb{n} W\sp{\bl}
   =\pi\sp{s}\pi\sb{n}\map\sb{\clS}(\diov S,\ck(M,n))\;.
\end{myeq}
Since \w{(\diov S)\sb{s}\rw I\Pz\ck\sp{s+1}X} is a weak equivalence for all $s\geq 0$,
\begin{equation*}
      \pi\sb{n}\map\sb{\Sz{\funcat{}{\Set}}}(\diov S,\ck(M,n))=
    H\sp{0}(B\,\diov S,M)\cong H\sp{0}(B\,I\, \Pz\ck\sb{\bl}X,M)
\end{equation*}
\nid Thus \w{H\sp{s}(BX,M)=\pi\sp{s} H\sp{0}(B\,I\, \Pz \ck\sb{\bl} X,M)}
by \wref[.]{eq-groupoidal-2}
On the other hand, in the proof of Theorem \ref{the-com-res-2} we showed that
\w[,]{\HSO{s}{X}{M}=\pi\sp{s} \HSO{0}{I\, \Pz\ck\sb{\bl} X}{M}}
while by Lemma \ref{lem-groupoidal-11} we have
\w[.]{H\sp{0}(B\,\Pz \ck\sb{\bl} X,M)=\HSO{0}{I\,\Pz\ck\sb{\bl}X}{M}}

It follows that \w[.]{H\sp{s}(BX,M)=\HSO{s}{X}{M}}
\end{proof}

From Theorem \ref{the-maps-theta-xi-1} and Proposition \ref{pro-groupoidal-11} we deduce:

\begin{corollary}\label{the-groupoidal-1}
Any \w{X\in\GGpdo} and $X$-module $M$ have a long exact sequence
\begin{equation*}
\rw H\sp{n}(BX,M)\rw H\sp{n}(BX\sb{0},j\sp{\ast} M)\rw H\sp{n}_C(X,M)\rw H^{n+1}(BX,M)\rw\cdots\;.
\end{equation*}
\end{corollary}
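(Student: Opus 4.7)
The plan is to combine Theorem \ref{the-maps-theta-xi-1} with Proposition \ref{pro-groupoidal-11} by direct substitution. Since $X\in\GGpdo$ sits inside $\Trt$ via the fully faithful inclusion $i\colon\GGpdo\hookrightarrow\Trt$, and its category $X\sb{0}$ of objects and $1$-arrows is itself a groupoid (as $X$ is a $2$-groupoid), the discrete $2$-groupoid $\dX\sb{0}$ also lies in $\GGpdo$, and $j\sp{\ast}M$ is canonically a $\dX\sb{0}$-module. Theorem \ref{the-maps-theta-xi-1} applied to $X$ therefore produces the long exact sequence
\begin{equation*}
\cdots\rw \HSO{n}{IX}{M} \rw \HSO{n}{I\dX\sb{0}}{j\sp{\ast}M} \rw H\sp{n}\sb{\Com}(X,M) \rw \HSO{n+1}{IX}{M} \rw\cdots
\end{equation*}

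Next I would apply Proposition \ref{pro-groupoidal-11} twice. For the outer terms it yields $\HSO{n}{IX}{M}\cong H\sp{n}(BX,M)$. For the middle term, applied to $\dX\sb{0}$ with module $j\sp{\ast}M$, it gives $\HSO{n}{I\dX\sb{0}}{j\sp{\ast}M}\cong H\sp{n}(B\dX\sb{0},j\sp{\ast}M)$; and because the double nerve $\Nb{2}\dX\sb{0}$ is constant in the groupoid direction, $B\dX\sb{0}=\Diag\Nb{2}\dX\sb{0}$ collapses to the ordinary nerve of $X\sb{0}$, which is $BX\sb{0}$.

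Substituting these identifications into the sequence gives the statement. I do not expect any serious obstacle: the heavy lifting is done in the two cited results, and the only point that requires care is naturality \wh the maps in the long exact sequence of Theorem \ref{the-maps-theta-xi-1}, induced by $j\colon\dX\sb{0}\to X$ and by the augmentation $\zve\colon\ck\sb{\bl}X\rw X$, must intertwine with the isomorphisms of Proposition \ref{pro-groupoidal-11}. This is automatic from the functoriality of the cofibrant replacement $\diov Z\to IX$ appearing in the proof of both results, so the identifications can be inserted termwise without altering the boundary maps.
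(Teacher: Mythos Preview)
Your proposal is correct and matches the paper's approach exactly: the paper presents this corollary with no proof beyond the phrase ``From Theorem \ref{the-maps-theta-xi-1} and Proposition \ref{pro-groupoidal-11} we deduce,'' and your argument spells out precisely that substitution, including the observation that $B\dX\sb{0}=BX\sb{0}$. The additional remarks you make about naturality and about $\dX\sb{0}\in\GGpdo$ are accurate and do no harm.
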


\begin{remark}
A $2$-groupoid with a single  object is an internal groupoid in the category of groups,
equivalent to a crossed module. It can be shown that in this case the long exact sequence of
Corollary \ref{the-groupoidal-1} recovers \cite[Theorem 13]{PaoliHHA2003}.
\end{remark}


\begin{thebibliography}{DKS}
  %
\bibitem[BBe]{BarrBeck}
M.~Barr \& J.M.~Beck,
``Homology and Standard Constructions''
in B. Eckmann, ed., \textit{Seminar on Triples and Categorical Homology Theory
(ETH, Z{\"{u}}rich, 1968)}.
Springer \textit{Lec.\ Notes Math.} \textbf{80},
Berlin-\-New York, 1969, pp.~245-330
%
\bibitem[BBl]{BauesBlanc2011}
H.-J.~Baues \& D.~Blanc, \
``Comparing cohomology obstructions'',
\textit{J. Pure \& Appl. Algebra} \textbf{215} (2011), pp.~1420-1439
%
\bibitem[BW]{BauesWirsching1985}
H.J.\ Baues \& G.~Wirsching,
``The cohomology of small categories'',\hsm
\textit{J.\ Pure \& Appl.\ Algebra} \textbf{38} (1985), pp.~187--211.
%
\bibitem[Be]{Bergner2010}
J.E. Bergner,
``A survey of {$(\infty, 1)$}-categories'',
in J.C. Baez \& J.P. May, eds., \textit{Towards Higher Categories},
Springer, Berlin-\-New York, 2010, pp. 69-83.
%
\bibitem[BP1]{BlancPaoli2011}
D.~Blanc \& S.~Paoli,
``Two-track categories'',
\textit{Journal of $K$-Theory} \textbf{8} (2011), pp.~59-106
%
\bibitem[BP2]{BP}
D.~Blanc \& S.~Paoli,
``Segal-type algebraic models of $n$-types'',
\textit{Algebraic \& Geometric Topology} \textbf{14} (2014), pp.~3419-3491.
%
\bibitem[Bo]{Bourne1987}
D. Bourn,
``The shift functor and the comprehensive factorization for internal groupoids'',
\textit{Cahiers de Topologie \& G{\'{e}}ometrie Diff{\'{e}}rentielle
  Cat{\'{e}}goriques} \textbf{28} (1987), pp. 197-226
%
\bibitem[BK]{BousKan}
A.K.~Bousfield \& D.M.~Kan,
\textit{Homotopy Limits, Completions, and Lo\-ca\-li\-za\-tions},\hsm
Sprin\-ger \textit{Lec.\ Notes Math.} \textbf{304}, Berlin-\-New York, 1972.
%
\bibitem[DK1]{DwyKan1980-1}
W.G.~Dwyer \& D.M.~Kan,
``Simplicial localizations of categories'',\hsm
\textit{J.\ Pure Appl.\ Alg.} \textbf{17} (1980), pp.~267-284.
%
\bibitem[DK2]{DKobsimp}
W.G.~Dwyer \& D.M.~Kan,
``An obstruction theory for diagrams of simplicial sets'',
\textit{Proc.\ Kon.\ Ned.\ Akad.\ Wet.\ - Ind.\ Math.} \textbf{46} (1984),  pp.~139-146.
%
\bibitem[DKS]{DwyerKanSmith1986}
W.G.~Dwyer, D.M.~Kan, \& J.H.~Smith,
``An obstruction theory for simplicial categories'',\hsm
\textit{Proc.\ Kon.\ Ned.\ Akad.\ Wet.\ - Ind.\ Math.} \textbf{48} (1986),
pp.~153-161.
%
\bibitem[GJ]{GJardS}
P.G.~Goerss \& J.F.~Jardine,
\textit{Simplicial Homotopy Theory},
Progress in Mathematics \textbf{179}, Birkh\"{a}user, Basel-Boston, 1999.
%
\bibitem[Ha]{Hasse}
M.~Hasse,
``Einige Bemerkungen \"{u}ber Graphen, Kategorien und Gruppoide",
\textit{Math. Nacht} \textbf{22} (1960), 255-270  .
%
\bibitem[Hi]{Hirsch}
P.S.~Hirschhorn,
\textit{Model Categories and their Localizations},\hsm
Math.\ Surveys \& Monographs \textbf{99}, AMS, Providence, RI, 2002.
%
\bibitem[Ho]{HovM}
M.A.~Hovey,
\textit{Model Categories},\hsm
Math.\ Surveys \& Monographs \textbf{63}, AMS, Providence, RI, 1998.
%
\bibitem[P1]{PaoliHHA2003}
S.~Paoli,
``(Co)homology of crossed modules with coefficients in a $\pi\sb{1}$-module",
\textit{Homology, Homotopy \& Appl.} \textbf{5} (2003), pp.~261-296
%
\bibitem[P2]{Paolibook}
S.~Paoli,
\textit{Simplicial Methods for Higher Categories: Segal-type models of weak $n$-categories},\hsm
'Algebra and Applications', Springer, to appear.
%
\bibitem[Q]{QuiH}
D.G.~Quillen,
\textit{Homotopical Algebra},\hsm
Springer \textit{Lec.\ Notes Math.} \textbf{43},
Berlin-\-New York, 1963.
%
\bibitem[VO]{VanOsdol}
D.H. ~Van Osdol,
``Long exact sequences in the first variable for algebraic cohomology theories",
\textit{J.\ Pure Appl.\ Alg.} \textbf{23} (1982), pp.~271-309.
%
\bibitem[W]{Weibel1994}
C.A.~Weibel,
\textit{An Introduction to Homological Algebra},\hsm
Camb.\ U.\ Press, Cambridge, UK, 1994.
%
\end{thebibliography}
\end{document}